\documentclass[10pt,reqno]{amsart}
\usepackage[T1]{fontenc}
\usepackage[utf8]{inputenc}
\usepackage{lmodern}
\usepackage{amssymb,mathtools}
\usepackage{aliascnt}
\usepackage{enumitem}
\usepackage[expansion=false]{microtype}
\usepackage{needspace,etoolbox}
\usepackage[hidelinks]{hyperref}
\usepackage[nameinlink,noabbrev]{cleveref}
\hypersetup{
pdftitle={Stationary Common Neighbors and Partition Hypotheses},
pdfauthor={Xiang Li},
pdfkeywords={partition relations, stationary sets, normal ideals, highly connected graphs, Ramsey-like cardinals, Tukey order},
pdfsubject={Stationary common neighbors, Ramsey-like cardinals, and Tukey transfer}}
\numberwithin{equation}{section}
\newtheorem{theorem}{Theorem}[section]
\newaliascnt{lemma}{theorem}
\newtheorem{lemma}[lemma]{Lemma}
\aliascntresetthe{lemma}
\crefname{lemma}{lemma}{lemmas}
\Crefname{lemma}{Lemma}{Lemmas}
\newaliascnt{corollary}{theorem}
\newtheorem{corollary}[corollary]{Corollary}
\aliascntresetthe{corollary}
\crefname{corollary}{corollary}{corollaries}
\Crefname{corollary}{Corollary}{Corollaries}
\newaliascnt{fact}{theorem}
\newtheorem{fact}[fact]{Fact}
\aliascntresetthe{fact}
\crefname{fact}{fact}{facts}
\Crefname{fact}{Fact}{Facts}
\newaliascnt{question}{theorem}
\newtheorem{question}[question]{Question}
\aliascntresetthe{question}
\crefname{question}{question}{questions}
\Crefname{question}{Question}{Questions}
\newaliascnt{proposition}{theorem}
\newtheorem{proposition}[proposition]{Proposition}
\aliascntresetthe{proposition}
\crefname{proposition}{proposition}{propositions}
\Crefname{proposition}{Proposition}{Propositions}

\theoremstyle{definition}
\newaliascnt{definition}{theorem}
\newtheorem{definition}[definition]{Definition}
\aliascntresetthe{definition}
\crefname{definition}{definition}{definitions}
\Crefname{definition}{Definition}{Definitions}
\newaliascnt{remark}{theorem}

\aliascntresetthe{remark}
\crefname{remark}{remark}{remarks}
\Crefname{remark}{Remark}{Remarks}
\newcommand{\keepstatement}[1]{\AtBeginEnvironment{#1}{\Needspace{5\baselineskip}}}
\forcsvlist{\keepstatement}{theorem,lemma,corollary,fact,question,proposition,definition,remark,namedlemma,namedclaim}

\newcommand{\Pow}{\mathcal P}
\newcommand{\ZFC}{\mathsf{ZFC}}
\newcommand{\Coll}{\operatorname{Coll}}
\newcommand{\RO}{\operatorname{RO}}
\newcommand{\ran}{\operatorname{ran}}

\newcommand{\bbB}{\mathbb B}
\newcommand{\mcD}{\mathcal D}
\newcommand{\hcarr}{\to_{\mathrm{hc},<3}}
\newcommand{\FCN}{\mathsf{FCN}}
\newcommand{\SFCN}{\mathsf{SFCN}}
\title[Stationary Common Neighbors]
{Stationary Common Neighbors and Partition Hypotheses}
\author{Xiang Li}
\address{Department of Mathematics, University of California, Los Angeles,
Los Angeles, California, USA}
\subjclass[2020]{Primary 03E02. Secondary 03E35, 03E55}
\keywords{Partition relations, stationary sets, normal ideals,
highly connected graphs, Ramsey-like cardinals, Tukey order}
\date{}
\setlist[enumerate]{label=\textup{(\arabic*)},leftmargin=*,itemsep=2pt}

\newcommand{\PH}{\operatorname{PH}}
\newcommand{\calA}{\mathcal A}
\newcommand{\Freq}{\operatorname{Freq}}
\newcommand{\concat}{\mathbin{{}^\frown}}
\begin{document}
\begin{abstract}
Collapsing a $T^{\kappa^+}_{\omega_1}$-Ramsey cardinal $\kappa$ to
$\omega_2$ gives, for every countable coloring of $[\omega_2]^2$, a
stationary set $X$ and a color $i$ such that every finite subset of
$X$ has stationarily many color-$i$ common neighbors in $X$. The
color-$i$ graph on $X$ has diameter at most
two after any nonstationary deletion, answering a question of
Hru\v{s}\'ak--Shelah--Zhang. Collapsing a weakly compact cardinal gives
$\PH_1(\omega_2)$ and, together with the known lower bound, determines
its exact consistency strength. Both results use local seed ideals.
We also prove that $\PH_n(Q,\lambda)\Rightarrow\PH_n(P,\lambda)$
whenever $P\leq_TQ$ are nonempty directed quasi-orders, for every
$n<\omega$ and cardinal $\lambda$, answering the Tukey-transfer question
of Bannister--Bergfalk--Moore--Todorcevic. Finally, for each pair of
integers $a\geq2$ and $b\geq a+3$, a product lemma gives, from two
weakly compact cardinals, the consistency of $\PH_1(P)$ for every
nonempty directed quasi-order $P\leq_T\omega_a\times\omega_b$.
\end{abstract}
\maketitle

\section{Introduction}\label{sec:introduction}

A graph of cardinality $\kappa$ is \emph{highly connected} if it remains
connected after deleting fewer than $\kappa$ vertices. The relation
$\kappa\to_{\mathrm{hc}}(\kappa)^2_\omega$, introduced by
Bergfalk--Hru\v{s}\'ak--Shelah \cite[Definition~3]{BHS}, asserts that every countable
coloring of $[\kappa]^2$ has a monochromatic highly connected graph of
size $\kappa$. For an integer $n\geq2$, the refinement
$\kappa\to_{\mathrm{hc},<n}(\kappa)^2_\omega$ requires that, after each
such deletion, any two remaining vertices be joined inside the
remaining graph by a path of fewer than $n$ edges.

Hru\v{s}\'ak--Shelah--Zhang \cite[Theorem~5.1]{HSZ} obtained
$\kappa\to_{\mathrm{hc},<4}(\kappa)^2_\omega$ from a countably complete
uniform ideal with a countably closed dense family of positive sets.
They asked whether the two-edge relation
$\omega_2\hcarr(\omega_2)^2_\omega$ is consistent
\cite[Question~8.2]{HSZ}. We prove its consistency in the stronger
form given below.

Throughout, $\kappa$ is a regular uncountable cardinal unless stated
otherwise. We write $[X]^r$ for the $r$-element subsets of $X$ and
$[X]^{<\omega}$ for its finite subsets. A set is \emph{club} in
$\kappa$ if it is closed and unbounded, and \emph{stationary} if it
meets every club. For a coloring $c:[\kappa]^2\to\omega$ and $i<\omega$,
let $G_i=(\kappa,c^{-1}\{i\})$ be its color-$i$ graph, and write $G_i[X]$
for its induced subgraph on $X\subseteq\kappa$. For $x<\kappa$, put
\[
 N_i(x)=\{y<\kappa:y\ne x\text{ and }c(\{x,y\})=i\},\qquad
 \Gamma_i^X(F)=X\cap\bigcap_{x\in F}N_i(x).
\]
Here $X\subseteq\kappa$ and $F\subseteq\kappa$ is finite, with
$\Gamma_i^X(\varnothing)=X$. The \emph{finite common-neighbor property}
is
\begin{equation}\label{eq:fcn}
 \forall F\in[X]^{<\omega}\quad |\Gamma_i^X(F)|=\kappa.
\end{equation}
\Needspace{5\baselineskip}
Its \emph{stationary} version is
\begin{equation}\label{eq:stationary-fcn}
 \forall F\in[X]^{<\omega}\quad
 \Gamma_i^X(F)\text{ is stationary in }\kappa.
\end{equation}
Write $\FCN_\omega(\kappa)$ and $\SFCN_\omega(\kappa)$ when every
countable coloring admits $X,i$ satisfying the corresponding property.
Taking $F=\varnothing$ requires $|X|=\kappa$ in the first case and
stationarity of $X$ in the second. For an infinite cardinal $\lambda$,
define $\FCN_\lambda(\kappa)$ and $\SFCN_\lambda(\kappa)$ in the same
way for $\lambda$-colorings. Choosing a common neighbor outside a set
of fewer than $\kappa$ deleted vertices gives
\begin{equation}\label{eq:hierarchy}
 \SFCN_\omega(\kappa)\ \Longrightarrow\ \FCN_\omega(\kappa)
 \ \Longrightarrow\ \kappa\hcarr(\kappa)^2_\omega.
\end{equation}
\begin{theorem}\label{thm:main}
Suppose $\Lambda$ is $T^{\Lambda^+}_{\omega_1}$-Ramsey and
$G\subseteq\Coll(\omega_1,{<}\Lambda)$ is generic over $V$.
Then
\[
 V[G]\models\SFCN_\omega(\omega_2).
\]
In particular, every countable coloring of $[\omega_2]^2$ has a
monochromatic graph on a stationary set which has diameter at most two
after deleting any nonstationary subset of $\omega_2$.
\end{theorem}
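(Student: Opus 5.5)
Here is the plan. I would work in $V[G]$ with a $\Coll(\omega_1,{<}\Lambda)$-name $\dot c$ for a coloring $c:[\omega_2]^2\to\omega$, where $\omega_2^{V[G]}=\Lambda$. The $T^{\Lambda^+}_{\omega_1}$-Ramsey hypothesis will be used in its game form. Player I plays an increasing sequence of $\Lambda$-models $M_\xi$ of size $\Lambda$, each closed under ${<}\omega_1$-sequences and containing $\dot c$ and the collapse. Player II answers with $M_\xi$-normal ultrafilters $U_\xi$ that cohere. Player II has a winning strategy of length $\omega_1$, so the union $U=\bigcup_{\xi<\omega_1}U_\xi$ is an $M$-normal, countably complete ultrafilter on $M=\bigcup_\xi M_\xi$, and $M$ is closed under $\omega$-sequences. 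From $U$ I will define a \emph{local seed ideal}. For each $\alpha<\Lambda$ in a $U$-large set that is inaccessible in $V$, write $G\restriction\alpha$ for the generic below $\alpha$. The seed $\alpha$ determines, via the $\omega_1$-closed tail forcing, the colors $c(\{x,\alpha\})$ for $x<\alpha$. The ideal of sets in $V[G]$ not ``containing the seed'' will be normal and $\omega_2$-complete in the sense required for stationarity. The ultrapower by $U$ of $M$ gives $j:M\to N$ with critical point $\Lambda$. Because $\Coll(\omega_1,{<}j(\Lambda))$ factors as $\Coll(\omega_1,{<}\Lambda)\times$ an $\omega_1$-closed forcing, we can lift to $M[G]\to N[G\ast H]$ for a suitable $H$. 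Then $\Lambda$ itself serves as a seed: it is a point with $c^*(\{x,\Lambda\})$ defined for all $x<\Lambda$, where $c^*=j(c)$.

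Next I fix the color and $X$. Let $i$ be a color such that the set $X_0=\{x<\Lambda: j(c)(\{x,\Lambda\})=i\}$ is stationary in $\Lambda$ in $V[G]$. Such an $i$ exists by countable completeness: the union over $i<\omega$ of these sets is all of $\Lambda$, and a countable union of nonstationary sets is nonstationary since $\omega_2$ is regular. Here $X_0$ is not in $M[G]$ a priori, but it is definable from $H$. To avoid this, I will instead choose $i$ and a condition in the tail forcing deciding it, and thin $X$ using the ultrafilter: put $x\in X$ iff $x\in X_0$ and $x$ ``looks like'' $\Lambda$ relative to the coherent system. The key property I need is this. For every finite $F\subseteq X$, the set $\Gamma_i^X(F)$ is stationary. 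By elementarity it suffices to show that $\Lambda\in j(\Gamma_i^X(F))$ and that $j(X)\cap\Lambda=X$ is reflected properly. For any club $C\in V[G]$, we have $\Lambda\in j(C)$, so $j(\Gamma)\cap j(C)\neq\varnothing$ gives $\Gamma\cap C\neq\varnothing$. So I need $\Lambda\in j(X)$ together with $j(c)(\{x,\Lambda\})=i$ for $x\in F$. The latter holds by the definition of $X_0$, since $j(x)=x$.

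The main obstacle is arranging $\Lambda\in j(X)$ while keeping $X$ stationary. This is a self-referential requirement: $X$ must be defined so that $\Lambda$ is a member of its own image. Here the $T^{\Lambda^+}_{\omega_1}$ length-$\omega_1$ game is essential. I would iterate. At stage $\xi$, using $U_\xi$ and the model $M_\xi$ containing the candidate $X_\xi$, I choose a condition and a seed. I then shrink to $X_{\xi+1}=\{x\in X_\xi: \ldots\}$, requiring that the set of $\alpha$ whose seed color pattern matches be $U_\xi$-large. At limits I take intersections, which countable completeness and the $\omega$-closure of $M$ make legitimate. The stationary set $X$ is then an ``$\omega_1$-diagonal'' fixed point. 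I expect the real work to be a lemma stating that the local seed ideal $I=\{A\subseteq\Lambda:\Lambda\notin j(A)\text{ for the relevant lifts}\}$ is normal, proper, and extends the nonstationary ideal. Given that lemma, $X=\{\alpha:\alpha\text{ is a seed with color }i\text{ to }X\cap\alpha\}$ is $I$-positive by a reflection argument, and $I$-positive sets are stationary. The ``in particular'' clause then follows from \eqref{eq:hierarchy}: after deleting a nonstationary $D$, any $x,y\in X\setminus D$ have a common neighbor in $\Gamma_i^X(\{x,y\})\setminus D$, which is stationary and hence nonempty.
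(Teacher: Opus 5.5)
There is a genuine gap, and it sits at the step you flag as the main obstacle. The requirement there is not merely self-referential; it cannot be satisfied. You fix a single lift $j$: either one tail generic $H$, or one tail condition deciding everything, in which case any $H$ containing that condition gives such a lift. You then ask for $X\in\Pow(\Lambda)\cap M[G]$ with $\Lambda\in j(X)$ and $j(c)(\{x,\Lambda\})=i$ for every $x$ in every finite $F\subseteq X$, that is, for every $x\in X$.

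Now $U_j=\{A\in\Pow(\Lambda)\cap M[G]:\Lambda\in j(A)\}$ is normal for sequences in $M[G]$. The sequence $\langle N_i(x):x\in X\rangle$ belongs to $M[G]$, and each of its terms lies in $U_j$. Hence its diagonal intersection with $X$,
\[
 Y=\{y\in X:\forall x\in X\cap y\ \ c(\{x,y\})=i\},
\]
lies in $U_j$. So $Y$ is unbounded in $\Lambda=\omega_2$ and color-$i$ homogeneous. If your plan worked for every $c$, then $V[G]$ would satisfy $\omega_2\to(\omega_2)^2_\omega$. This is false, because $\omega_2$ is not inaccessible. Your closing description $X=\{\alpha:\alpha\text{ has color }i\text{ to }X\cap\alpha\}$ is literally a homogeneous set, and no length-$\omega_1$ iteration of shrinkings can produce one.

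Two smaller points. First, the hypothesis as defined here gives player II no winning strategy. It directly provides a countably closed $M\prec H(\Lambda^+)$ containing $\dot c$ with a weakly amenable $M$-ultrafilter, and your plan never uses weak amenability. Second, $X_0$ lives in $V[G][H]$, where $Q$ has collapsed $\Lambda$ to $\omega_1$, so asking it to be stationary in $V[G]$ makes no sense.

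The missing idea is that different common-neighbor sets must be witnessed by \emph{different} tail conditions, so positivity has to be Boolean-valued. The paper factors $j(P)\cong P\times Q$. For $A=\tau^G$ it sets $b_A=\|\check\Lambda\in j(\tau)_G\|$ in $\RO(Q)^{V[G]}$, and takes $I=\{A:b_A=0\}$. The paper then establishes the following properties of $I$.
\begin{enumerate}
\item $I$ is normal over $M[G]$ and contains the complements of clubs in $M[G]$.
\item $\{A\in\calA\setminus I:b_A\in\iota[Q]\}$ is a countably closed dense family. This uses countable closure of $M$, countable closure of $Q$ in $N$, and the fact that $P$ adds no countable sequences.
\item Weak amenability, i.e.\ powerset preservation of $j$, puts the null-index sets of internal sequences into $M[G]$.
\end{enumerate}
The combinatorial engine is then the Hru\v{s}\'ak--Shelah--Zhang frequent sequence: positive sets $B_n$ with $\Freq_i(B_n,B_m)$ for $n<m$, obtained from the closed dense family. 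This is followed by the finite-extension lemma: for good $F$, the exceptional set $E_F$ is null. Since $\langle E_F\rangle\in M[G]$, the diagonal union $Z$ over codes of finite sets is null. Then $X=(B\cap C)\setminus Z$ has every $\Gamma_i^X(F)$ positive. Finally, $M[G]\prec H(\Lambda^+)^{V[G]}$ turns positivity into stationarity.

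Your plan contains none of this, so it gives no route from a seed to the required $X$. Your derivation of the ``in particular'' clause from $\SFCN_\omega(\omega_2)$ is fine.
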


The hypothesis is Holy--L\"ucke's $T^{\Lambda^+}_{\omega_1}$-Ramsey
property, recalled in \Cref{sec:ramsey-models}. It is strictly weaker
in consistency strength than Holy--Schlicht's $\omega_1$-Ramsey
property \cite[Lemma~9.14]{HolyLucke}.
\Cref{thm:ramseycollapse} and \Cref{cor:cohen} give the corresponding results at
successors of uncountable regular cardinals and at a weakly
inaccessible cardinal which is not weakly compact.

Common neighbors also connect to the partition hypotheses
$\PH_n(P,\lambda)$ for directed quasi-orders, introduced by
Bannister--Bergfalk--Moore--Todorcevic in their study of higher derived
limits \cite{BBMT}. We recall their definitions in
\Cref{sec:partition-hypotheses}.

\begin{theorem}\label{thm:ph-intro}
If $\Lambda$ is weakly compact, $\nu<\Lambda$ is uncountable regular,
and $G\subseteq\Coll(\nu,{<}\Lambda)$ is generic over $V$, then
$V[G]\models\PH_1(\nu^+)$. In particular, $\PH_1(\omega_2)$ is
equiconsistent with the existence of a weakly compact cardinal.
Moreover, for all nonempty directed quasi-orders $P\leq_TQ$, all
$n<\omega$, and all cardinals $\lambda$,
\[
 \PH_n(Q,\lambda)\ \Longrightarrow\ \PH_n(P,\lambda).
\]
\end{theorem}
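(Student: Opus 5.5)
The proof splits into three independent parts: the forcing result, the equiconsistency, and the Tukey transfer. I treat them in that order, working throughout with the Bannister--Bergfalk--Moore--Todorcevic formulation of $\PH_n(P,\lambda)$ recalled in \Cref{sec:partition-hypotheses}.

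For $V[G]\models\PH_1(\nu^+)$, first note that $\Coll(\nu,{<}\Lambda)$ is ${<}\nu$-closed and $\Lambda$-c.c. Hence $\Lambda=\nu^{+V[G]}$, and every subset of $\Lambda$ in $V[G]$ is decided by an antichain-bounded family of conditions. Given a name $\dot c$ for a coloring of the relevant pairs, I would use weak compactness of $\Lambda$ in its embedding form. Take a $\Lambda$-model $M\ni\dot c$ and an elementary embedding $j:M\to N$ with critical point $\Lambda$. Then $j(\Coll(\nu,{<}\Lambda))$ factors as $\Coll(\nu,{<}\Lambda)\times\Coll(\nu,[\Lambda,j(\Lambda)))$, and the tail is ${<}\nu$-closed.

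In $V[G]$, for each $\alpha<\Lambda$ I define the \emph{local seed ideal}. A set $A\subseteq\Lambda$ is positive if some condition in the tail collapse forces $\Lambda\in j(A)$, with $j$ lifted to the extension. Working with $M$, $j$ and this lifting inside $V[G]$, the ideal is normal and $\nu$-complete, and the quotient has a $\nu$-closed dense subset (given by the tail conditions). Given the ideal, the combinatorial core follows the pattern of \cite[Theorem~5.1]{HSZ}. Fix the color of the seed $\Lambda$ against each point, decided by a tail condition. Then run a length-$\nu$ fusion through the closed dense family to obtain a positive (hence stationary) set homogeneous in the sense required by $\PH_1$.

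The main obstacle is this step. $\PH_1$ demands a cofinal, frequency-type witness with coherence over pairs, not merely a monochromatic stationary set. I would first try to build the witness by recursion on levels of $\nu^+$, using normality to diagonalize and using $\nu$-closure of the dense family to survive the $\nu$ many requirements at limit stages.

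For the equiconsistency, the upper bound is the case $\nu=\omega_1$ of the first part. For the lower bound I would cite the known result that $\PH_1(\omega_2)$ implies $\omega_2$ is weakly compact in $L$.

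For the Tukey transfer, fix a Tukey pair for $P\leq_T Q$. Let $g:Q\to P$ be a cofinal (convergent) map and $f:P\to Q$ an unbounded map, so that $g$ maps cofinal subsets of $Q$ to cofinal subsets of $P$ and $f$ maps unbounded sets to unbounded sets. Given a coloring $c$ on $P$ with $\lambda$ colors, define $c'$ on $Q$ by $c'(q_0,\dots,q_n)=c(g(q_0),\dots,g(q_n))$. Because $g$ need not be monotone, I would first replace $g$ by a monotone cofinal map on a cofinal subset; if this fails, I would handle non-increasing tuples by adding an extra "default" color. Apply $\PH_n(Q,\lambda)$ to $c'$ to get a witness, and push it forward along $g$. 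Cofinality of the image then follows from convergence of $g$. The frequency (positivity) conditions transfer back because any set that is small in $P$ has a bounded $f$-image, and so the corresponding preimage in $Q$ would be small. The delicate point is verifying this last clause dimension by dimension for general $n$, so I would set up an induction on $n$ that keeps $f$ and $g$ fixed throughout.
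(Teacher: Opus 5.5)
For $\PH_1(\nu^+)$, the step you flag as the main obstacle is the whole proof, and the plan you sketch for it would not work. First, you never say what suffices for $\PH_1$. By \Cref{prop:fcnph}, it is enough to find an unbounded $X$ and a color $i$ such that any $\alpha<\beta$ in $X$ have a common color-$i$ neighbor above $\beta$. Second, a length-$\nu$ fusion deciding the seed's color against each of the $\nu^+$ points cannot work. The tail is only ${<}\nu$-closed, so even a length-$\nu$ descending sequence need not have a lower bound. The closure has to be spent on the $\lambda<\nu$ colors instead, to find a frequent pair by contradiction.

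Third, and most seriously, an embedding $j:M\to N$ obtained from weak compactness need not be $\Lambda$-powerset preserving. Then the seed ideal $I$ need not be a member of $M[G]$. The sets you must measure, such as $\{\beta:N_i(\alpha)\cap N_i(\beta)\cap B\in I\}$ and the $\Lambda$-sequence of them to which normality should apply, then need not lie in $M[G]$. That is the only algebra on which $I$ is defined and normal.

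The missing idea is the paper's second model. Take a $\Lambda$-model $M_1\prec H(\Lambda^+)$ containing $M_0,N_0,j_0$ and the factorization, so that $I_0\in W_1=M_1[G]$. Then run the asymmetric argument of \Cref{lem:asymmetric}:
\begin{itemize}
\item get $i$, $A\in\calA_1\setminus I_1$ and $B\in\calA_0\setminus I_0$ with $\{\alpha\in A:N_i(\alpha)\cap D\in I_0\}\in I_1$ for every positive $D\subseteq B$;
\item with $R_\alpha=N_i(\alpha)\cap B$, delete from $A$ the $I_1$-null diagonal union of the sets $E_\alpha=\{\beta:R_\alpha\cap R_\beta\in I_0\}$.
\end{itemize}
Bounded sets are null, so the surviving pairs have unbounded common neighborhoods, and \Cref{prop:fcnph} finishes.

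For the Tukey transfer, your convergent $g:Q\to P$ and unbounded $f:P\to Q$ play the roles of the paper's $r$ and $g$. However, the decisive step is left open. Also, the ``frequency conditions'' you propose to transfer do not exist: $\PH_n$ only requires the transferred map $H$ to satisfy $p\leq_P H(p)$ and to be monotone under $\unlhd$. For the natural choice $H(s)=g(F(f\circ s))$, both requirements are problematic. Cofinality needs $f(p)$ to lie above a convergence threshold of $g$ at $p$, so $f$ and $g$ must be chosen together. Monotonicity fails because $g$ is not monotone, you give no construction of a monotone replacement, and your ``default color'' is unspecified.

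The paper's device is as follows. Let $r(q)$ be an upper bound of $A_q=\{p:g(p)\leq_Q q\}$. Add to $c(r(q_0),\ldots,r(q_n))$ a bit recording whether $g(r(q_j))\leq_Q q_{j+1}$ for all $j<n$. The flag built by $t_{j+1}=t_j\concat(g(r(F(t_j))))$ has bit $1$, so homogeneity forces bit $1$ on every flag. Extending an immediate inclusion $s\unlhd t$ to a full flag then gives $r(F(s))\in A_{F(t)}$, hence $r(F(s))\leq_P r(F(t))$, and $H(s)=r(F(g\circ s))$ works. This uses $\lambda\times2$ colors, so finite $\lambda$ must be handled separately; the paper does this in \Cref{thm:finite}, using an ultrafilter extending the cone filter.

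Your equiconsistency argument matches the paper.
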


Taking $\nu=\omega_1$ lowers the huge-cardinal upper bound for
$\PH_1(\omega_2)$ in \cite[Theorem~8.13 and Remark~8.14]{BBMT} to
its weakly compact lower bound \cite[Proposition~8.19]{BBMT}.
This answers \cite[Question~9.4]{BBMT}. The transfer assertion answers
\cite[Question~9.11]{BBMT}.

The two collapse arguments use ideals on the subsets of $\kappa$
belonging to weak models. For the stationary result, a finite-extension
lemma turns frequent sequences into common-neighbor witnesses.
Countable closure puts the sequence in the model, and normality over
the model removes the exceptional vertices by a diagonal union.
For $\PH_1$, we use two models. The second contains the first ideal,
so the exceptional sets for a frequent pair belong to the second
model.
Hru\v{s}\'ak--Shelah--Zhang discuss a strongly Ramsey version of their
three-edge argument in \cite[Section~8]{HSZ}. Related common-neighbor
constructions appear in \cite[proof of Theorem~2.2]{LHRamsey} and
\cite[proof of Theorem~8.13]{BBMT}.

The product lemma and Tukey transfer give another consequence. For
integers $2\leq a$ and $b\geq a+3$, two weakly compact
cardinals suffice for the consistency of $\PH_1(P)$ for every nonempty
directed quasi-order $P\leq_T\omega_a\times\omega_b$, where the product
has the coordinatewise order (\Cref{cor:product-consistency}).

\section{Common-Neighbor Extraction}\label{sec:extraction}

An ideal \(I\) on \(\kappa\) is \emph{proper} if \(\kappa\notin I\),
\emph{uniform} if \([\kappa]^{<\kappa}\subseteq I\), and
\emph{\(\kappa\)-complete} if it is closed under unions of fewer than
\(\kappa\) members. Countable completeness means closure under countable
unions. Normality means closure under diagonal unions
\[
 \nabla_{\alpha<\kappa}A_\alpha
 =\{\beta<\kappa:\exists\alpha<\beta\ (\beta\in A_\alpha)\}.
\]
Write $NS_\kappa$ for the ideal of nonstationary subsets of $\kappa$.
A uniform normal ideal contains $NS_\kappa$. Write
\(I^+=\Pow(\kappa)\setminus I\) and \([A]_I\) for the class of \(A\)
in $\Pow(\kappa)/I$, where $\Pow$ denotes the power set. Thus
$[A]_I=[B]_I$ exactly when the symmetric difference $A\mathbin\triangle B$
belongs to $I$, and $[A]_I\leq[B]_I$ exactly when $A\setminus B\in I$.

A family \(\mcD\subseteq I^+\) is \emph{dense} if every positive set
contains a member of \(\mcD\). It is \emph{countably closed} if every
decreasing \(\omega\)-sequence in \(\mcD\) has a lower bound in
\(\mcD\). Unless a quotient order is stated, the order is actual
inclusion.

\begin{theorem}\label{thm:ideal-criterion}
Suppose $I$ is a proper uniform $\kappa$-complete ideal on $\kappa$
and $(I^+,\subseteq)$ has a countably closed dense subset.
For every $c:[\kappa]^2\to\omega$, there are $i<\omega$ and
$X\subseteq\kappa$ satisfying \eqref{eq:fcn}.
If $I$ is normal, they can be chosen so that
\begin{equation}\label{eq:ideal-fcn}
 \forall F\in[X]^{<\omega}\quad \Gamma_i^X(F)\in I^+.
\end{equation}
Thus $\FCN_\omega(\kappa)$ holds, and normality gives
$\SFCN_\omega(\kappa)$.
\end{theorem}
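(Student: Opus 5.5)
The plan is to reduce both conclusions to one combinatorial statement: there are $D\in\mcD$ and a color $i$ such that every $x\in D$ is \emph{$i$-good below $D$}. Here $x$ is $i$-good below $D$ if $D'\cap N_i(x)\in I^+$ for every $D'\in\mcD$ with $D'\subseteq D$. Goodness passes to smaller members of $\mcD$, which is what makes it usable.

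Given such $D$ and $i$, I would put $X=D$ and prove by induction on $|F|$ that $\Gamma_i^X(F)$ contains a member of $\mcD$ below $D$. The base case is $F=\varnothing$. For the step, let $F=F_0\cup\{x\}$ and take $D'\in\mcD$ with $D'\subseteq\Gamma_i^X(F_0)$. Since $x$ is $i$-good below $D$, the set $D'\cap N_i(x)$ is positive, and density gives a member of $\mcD$ inside it. Hence $\Gamma_i^X(F)\in I^+$ for every finite $F\subseteq X$. Uniformity of $I$ then gives \eqref{eq:fcn}. If $I$ is normal and uniform, then $NS_\kappa\subseteq I$, so $I^+$ consists of stationary sets; this gives \eqref{eq:ideal-fcn} and $\SFCN_\omega(\kappa)$. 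In this plan normality is used only at this last translation.

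The first ingredient is a one-vertex lemma: for every $x$ and every $D\in\mcD$ there are $D'\subseteq D$ in $\mcD$ and a color $i$ such that $x$ is $i$-good below $D'$. If not, I would build a decreasing sequence $D=D_0\supseteq D_1\supseteq\cdots$ in $\mcD$ with $D_{n+1}\cap N_n(x)\in I$. Countable closedness gives a lower bound $D_\omega\in\mcD$. Then $D_\omega\subseteq\{x\}\cup\bigcup_n(D_{n+1}\cap N_n(x))$, which lies in $I$ by countable completeness and uniformity. This contradicts $D_\omega\in I^+$.

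The main obstacle is passing from this pointwise lemma to a single pair $(D,i)$ that works for $I$-positively many $x$ at once. If we then shrink $D$ into that positive set using density, goodness passes down and the resulting member of $\mcD$ serves as $X$. The witnesses $D'_x$ from the lemma vary with $x$, and $\kappa$-completeness alone does not amalgamate them. I would first try a fusion argument. Assuming no $(D,i)$ has positively many $i$-good points, build a decreasing sequence $D_0\supseteq D_1\supseteq\cdots$ in $\mcD$ so that step $n$ handles color $n$. Concretely, $D_{n+1}$ should avoid the set of points that are $n$-good below some member of $\mcD$ inside $D_n$, this set being in $I$ by the assumption together with $\kappa$-completeness. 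Then apply the one-vertex lemma to a point of the lower bound $D_\omega$ to reach a contradiction.

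The delicate point is that "good below some smaller condition" is not monotone in the right direction. So I expect to recast goodness in terms of the quotient $\Pow(\kappa)/I$, or equivalently in terms of $\mcD$-names. Forcing with $\mcD$ adds a $V$-ultrafilter $U$ and no new $\omega$-sequences, and in the extension each $x$ gets a color $i_x$ with $N_{i_x}(x)\in U$. The aim is then to find a condition deciding that $\{x:i_x=i\}$ is large. This is the step I would expect to cost the most work.
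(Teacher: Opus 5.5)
There is a genuine gap, and it is exactly the step you flag as the main obstacle. The pair $(D,i)$ you aim for need not exist, so no fusion or forcing argument can produce it.

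By density, $x$ is $i$-good below $D$ if and only if $D\setminus N_i(x)\in I$. Indeed, if $D\setminus N_i(x)$ were positive, a member of $\mcD$ inside it would meet $N_i(x)$ in $\varnothing$. So your target says that $D\in I^+$ and every $x\in D$ is joined in color $i$ to all of $D$ outside a null set.

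That already yields a homogeneous set. Recursively choose $y_\alpha\in D$ outside $\{y_\beta:\beta<\alpha\}\cup\bigcup_{\beta<\alpha}(D\setminus N_i(y_\beta))$, which is null by uniformity and $\kappa$-completeness. Then $\{y_\alpha:\alpha<\kappa\}$ is $i$-homogeneous of size $\kappa$. Thus your target, for all colorings, would give $\kappa\to(\kappa)^2_\omega$, i.e.\ weak compactness.

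But the hypotheses of the theorem hold consistently at $\kappa=\omega_2$. Examples are the collapse ideal of a measurable, or the Eskew--Hayut ideals to which the paper applies this theorem. Meanwhile $\omega_2\not\to(\omega_2)^2_2$ holds in $\ZFC$. In your forcing language, $x\mapsto i_x$ is a new function on $\kappa$, and countable closure gives no control over $\kappa$-sequences.

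A warning sign is that your plan would give $\Gamma_i^X(F)\in I^+$ without normality. The theorem only claims $|\Gamma_i^X(F)|=\kappa$ in that case. Your one-vertex lemma is fine, and in fact needs only countable completeness.

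The missing idea is to demand much less of a single set. The paper uses the frequency relation $\Freq_i(B_0,B_1)$: for every positive $D\subseteq B_1$, the set of $z\in B_0$ with $N_i(z)\cap D\in I$ is null. The exceptional set may depend on $D$, and this is what countable closure can deliver.

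The paper then proceeds in these steps.
\begin{enumerate}
\item A fusion over pairs $(C_n,D_n)$, close to your one-vertex argument, shows that every positive set contains a frequent pair in some color.
\item A second fusion fixes one color $i$ and a positive $A$ all of whose positive subsets contain $i$-frequent pairs. This gives $B_n$ with $\Freq_i(B_n,B_m)$ for $n<m$.
\item Call a finite $F$ good if $B_m\cap\bigcap_{x\in F}N_i(x)\in I^+$ for all large $m$. Frequency applied to these test sets shows that the set $E_F$ of $z\in\bigcup_nB_n$ with $F\cup\{z\}$ not good is null.
\item The bad vertices are then removed pointwise rather than by one condition. In the normal case this is a diagonal union $\nabla_\xi E_{q^{-1}(\xi)}$ relative to a coding of finite sets, which gives positive common neighborhoods. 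In general, a $\kappa$-length recursion avoids $\bigcup_F E_F$ over finite sets of earlier points, which gives only size $\kappa$.
\end{enumerate}
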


\subsection{Frequent sequences and finite extensions}

Fix an ideal $I$ on $\kappa$ and a coloring $c:[\kappa]^2\to\omega$.
For $i<\omega$ and $B_0,B_1\in I^+$, write
$\operatorname{Freq}_i(B_0,B_1)$ if
\begin{equation}\label{eq:frequent}
 \forall D\in I^+\quad
 \left(D\subseteq B_1\ \Longrightarrow\
 \{z\in B_0:N_i(z)\cap D\in I\}\in I\right).
\end{equation}
This is the null-set form of \cite[Remark~5.2]{HSZ}.
Positive restrictions preserve frequency in either coordinate. We use
the following construction from \cite[Theorem~5.1]{HSZ}.

\begin{lemma}\label{lem:sequence}
Suppose \(I\) is proper, uniform and countably complete, and
\((I^+,\subseteq)\) has a countably closed dense subset.
For every \(c:[\kappa]^2\to\omega\), there are \(i<\omega\) and
\(B_n\in I^+\), \(n<\omega\), such that
\begin{equation}\label{eq:sequence}
 \forall n<m<\omega\quad \operatorname{Freq}_i(B_n,B_m).
\end{equation}
\end{lemma}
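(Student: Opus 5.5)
The plan is a fusion construction over countably many coordinates at once, followed by Ramsey's theorem on $\omega$ to pick a single color. I will use two facts throughout. First, $\operatorname{Freq}_i(B_0,B_1)$ persists when $B_0,B_1$ are replaced by positive subsets; this is the remark before the lemma, and it is immediate from \eqref{eq:frequent}. Second, its failure has a concrete witness. If $\operatorname{Freq}_i(B_0,B_1)$ fails, there are positive $D\subseteq B_1$ and positive $Z\subseteq B_0$ such that $N_i(z)\cap D\in I$ for every $z\in Z$. Fix a countably closed dense family $\mcD\subseteq I^+$. We may shrink $D$ and $Z$ into $\mcD$, since the defining property of $Z$ survives shrinking either set.

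\textbf{Construction.} Fix a bookkeeping enumeration $\langle (n_s,m_s,i_s):s<\omega\rangle$ of all triples with $n<m<\omega$ and $i<\omega$, each listed once. We build sets $X^s_n\in\mcD$ for $s,n<\omega$, starting from $X^0_n=\kappa'$ for all $n$, where $\kappa'$ is any fixed member of $\mcD$. At stage $s$, look at $n=n_s$, $m=m_s$, $i=i_s$.
\begin{enumerate}
\item If $\operatorname{Freq}_i(X^s_n,X^s_m)$ fails, choose witnesses $Z\subseteq X^s_n$ and $D\subseteq X^s_m$ in $\mcD$ as above. Set $X^{s+1}_n=Z$ and $X^{s+1}_m=D$.
\item Otherwise change nothing.
\end{enumerate}
All other coordinates are left unchanged. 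Each column $\langle X^s_n:s<\omega\rangle$ is then a decreasing sequence in $\mcD$, so countable closure gives a lower bound $Y_n\in\mcD$.

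\textbf{Key dichotomy for each pair $n<m$.} Suppose that at the stage handling $(n,m,i)$, alternative (1) occurred for every $i<\omega$. By construction and monotonicity, every $z\in Y_n$ has $N_i(z)\cap Y_m\in I$ for every $i$. Fix one $z\in Y_n$. Then
\[
 Y_m\setminus\{z\}\subseteq\bigcup_{i<\omega}\bigl(N_i(z)\cap Y_m\bigr)\in I
\]
by countable completeness. Uniformity then gives $Y_m\in I$, which is a contradiction. Hence for each pair $n<m$ there is a color $i(n,m)$ for which alternative (2) occurred at that stage, that is, $\operatorname{Freq}_{i(n,m)}(X^s_n,X^s_m)$ held. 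By persistence under positive restrictions, $\operatorname{Freq}_{i(n,m)}(Y_n,Y_m)$ holds.

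\textbf{Choosing one color.} The map $\{n,m\}\mapsto i(n,m)$ is a coloring of $[\omega]^2$ with $\omega$ colors, so Ramsey's theorem does not apply to it directly. I will fix this in one of two ways.
\begin{enumerate}
\item Recolor each pair by the least $i$ that worked, and note that pairs with $i(n,m)\ge 1$... Instead, the cleaner route is to enumerate the triples so that for each pair the colors are processed in increasing order $i=0,1,2,\dots$.
\item Then apply a diagonal thinning. For each $n$, pass to an infinite set of $m$'s on which $i(n,m)$ is constant, or else on which $i(n,m)\to\infty$.
\end{enumerate}
The second case of this thinning must be excluded. The tool I would try first is to run the same "all colors fail" contradiction along a subsequence $m_0<m_1<\cdots$ with $i(n,m_k)\ge k$. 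That would require the failure witnesses for colors below $k$ to shrink a single target set, which the construction does not guarantee.

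This color-bounding step is the main obstacle. My fallback is to strengthen the construction. Each stage handles a pair $(n,m)$ for all colors $i$ simultaneously, by a countable sub-fusion on $X_n,X_m$ using countable closure again. Then $i(n,m)$ can be taken to be the least color that does not fail, and I hope to bound it by a function of $n$ alone, for instance by using $\{z\in Y_n:\ldots\}$ together with countable completeness to partition $Y_n$. Once $i(n,m)$ depends only on $n$, say $i(n,m)=j(n)$, the pigeonhole principle on $n\mapsto j(n)$ gives an infinite $H\subseteq\omega$ and a color $i$ with $j(n)=i$ for all $n\in H$. Enumerating $H=\{h_0<h_1<\cdots\}$, the sets $B_k=Y_{h_k}$ then satisfy \eqref{eq:sequence}.
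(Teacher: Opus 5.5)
Your fusion and per-pair dichotomy are correct. They already contain the paper's first step, namely that every positive set contains a pair frequent in \emph{some} color. The gap is the one you flag, and it is a real one.

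Your construction gives a color $i(n,m)$ for each pair, but with $\omega$ colors there is no Ramsey theorem on $[\omega]^2$. For example, the coloring $i(n,m)=m$ has no homogeneous set of size three. Nothing in your construction rules out such a pattern: colors are processed pair by pair, and the shrinkings made for one pair say nothing about which color survives for another.

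The fallback does not repair this. You give no argument that $i(n,m)$ depends only on $n$. Even if it did, the final pigeonhole step fails, since $j:\omega\to\omega$ may be injective and then has no infinite fiber.

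What is missing is to fix the color globally before building the sequence. Call a positive $A$ \emph{$i$-saturated} if every positive subset of $A$ contains positive $B_0,B_1$ with $\Freq_i(B_0,B_1)$.

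Some color $i$ admits an $i$-saturated set. Otherwise, starting from $\kappa$, shrink to $C_0\supseteq C_1\supseteq\cdots$ in $\mcD$ so that $C_n$ contains no $n$-frequent pair; this property passes to positive subsets. A lower bound $C_*\in\mcD$ then contains no frequent pair in any color. That contradicts your dichotomy run with all columns starting at $C_*$, which yields $Y_0,Y_1\subseteq C_*$ frequent in color $i(0,1)$.

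Now let $A_0$ be $i$-saturated, and choose positive $B_n,A_{n+1}\subseteq A_n$ with $\Freq_i(B_n,A_{n+1})$. Since $B_m\subseteq A_{n+1}$ for $m>n$, restriction gives $\Freq_i(B_n,B_m)$.

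This is the paper's argument. It uses countable closure twice: once to find a frequent pair in some color, and once to kill the colors one at a time. It makes both your simultaneous fusion over all pairs and the Ramsey step unnecessary.
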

\begin{proof}
Fix a countably closed dense family $\mcD\subseteq I^+$. Every
positive $A$ contains a frequent pair in some color: otherwise, failure
of frequency in color $n$ lets us choose decreasing $C_n,D_n\in\mcD$
below $A$ with
\[
 N_n(x)\cap D_n\in I\qquad(x\in C_n).
\]
At each stage, failure of frequency gives a positive exceptional set
in the first coordinate and a positive restriction in the second.
Refine both in $\mcD$. Countable closure gives lower bounds $C_*,D_*$.
For $x\in C_*$, the colors partition the edges from $x$, so
\[
 D_*=(D_*\cap\{x\})\cup
       \bigcup_{n<\omega}(D_*\cap N_n(x))\in I,
\]
contrary to $D_*\in I^+$.

Some color $i$ and positive $A$ therefore have the stronger property
that every positive subset of $A$ contains an $i$-frequent pair. If
not, successively shrink $A_{-1}=\kappa$ to $A_n\in\mcD$ containing
no $n$-frequent pair. A lower bound for the $A_n$ would contain no
frequent pair in any color, contrary to the first paragraph.

Starting with $A_0=A$, choose $B_n,A_{n+1}\in I^+$ inside $A_n$ with
$\Freq_i(B_n,A_{n+1})$. Since $B_m\subseteq A_{n+1}$ whenever $n<m$,
restriction gives \eqref{eq:sequence}.
\end{proof}

Fix a countably complete ideal \(I\), a coloring \(c\), a color \(i\),
and a sequence of positive sets satisfying \eqref{eq:sequence}. Put
\(B=\bigcup_{n<\omega}B_n\) and, for finite $F\subseteq\kappa$, let
\[
 D(F,m)=B_m\cap\bigcap_{x\in F}N_i(x)\qquad(m<\omega).
\]
Call $F$ \emph{good} if
\begin{equation}\label{eq:good-finite}
 \exists m_F<\omega\ \forall m\ge m_F\quad D(F,m)\in I^+.
\end{equation}
The empty set is good. For good \(F\), put
\[
 E_F=\{z\in B:F\cup\{z\}\text{ is not good}\}.
\]
For nongood \(F\), put \(E_F=\varnothing\).

\begin{lemma}\label{lem:finite-extension}
For every finite \(F\subseteq\kappa\), \(E_F\in I\).
\end{lemma}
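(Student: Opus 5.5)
We argue by contradiction, using countable completeness of $I$ twice to isolate one index $n$ and one index $m>n$, and then contradict $\Freq_i(B_n,B_m)$ from \eqref{eq:sequence}. If $F$ is not good, then $E_F=\varnothing\in I$ and there is nothing to prove. So assume $F$ is good, with witness $m_F$ as in \eqref{eq:good-finite}, and suppose toward a contradiction that $E_F\in I^+$.

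First note what membership in $E_F$ means. If $z\in F$ then $F\cup\{z\}=F$ is good, so $z\notin E_F$. For $z\in E_F$ we therefore have
\[
 D(F\cup\{z\},m)=D(F,m)\cap N_i(z)
\]
for every $m$. Since $F\cup\{z\}$ is not good, there are infinitely many $m$ with $D(F,m)\cap N_i(z)\in I$.

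Next, fix $n$. Since $E_F\subseteq B=\bigcup_{n<\omega}B_n$ and $I$ is countably complete, some $n$ has $E_F\cap B_n\in I^+$. Put $k=\max(n+1,m_F)$. For $m\geq k$ let
\[
 Z_m=\{z\in E_F\cap B_n: D(F,m)\cap N_i(z)\in I\}.
\]
Each $z\in E_F\cap B_n$ has infinitely many bad $m$, hence at least one bad $m\geq k$. So $E_F\cap B_n=\bigcup_{m\geq k}Z_m$. By countable completeness again, some $Z_m$ with $m\geq k$ is positive.

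Finally, take $D=D(F,m)$. Since $m\geq m_F$, goodness of $F$ gives $D\in I^+$, and by definition $D\subseteq B_m$. Since $n<m$, \eqref{eq:sequence} gives $\Freq_i(B_n,B_m)$, so by \eqref{eq:frequent} the set $\{z\in B_n:N_i(z)\cap D\in I\}$ lies in $I$. But this set contains $Z_m\in I^+$, a contradiction. Hence $E_F\in I$.

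The only delicate point is choosing $m$ simultaneously beyond $n$, so that frequency applies, and beyond $m_F$, so that $D(F,m)$ is positive. Infinitude of the bad set of indices for each $z$ is exactly what allows both requirements to be met.
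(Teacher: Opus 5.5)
Your proof is correct and is essentially the paper's argument written as a contradiction. The paper bounds $E_F\cap B_n$ directly by the countable union of the frequency-null sets $\{z\in B_n:N_i(z)\cap D(F,m)\in I\}$ for $m\geq\max\{m_F,n+1\}$, and then takes the union over $n$; these are the same two uses of countable completeness that you make.
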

\begin{proof}
If $F$ is not good, then $E_F=\varnothing\in I$. Otherwise, fix
$m_F$ as in \eqref{eq:good-finite} and put
$k_n=\max\{m_F,n+1\}$. For
$m\geq k_n$, frequency applied to the positive set $D(F,m)\subseteq
B_m$ gives
\[
 E_{F,n,m}=\{z\in B_n:N_i(z)\cap D(F,m)\in I\}\in I.
\]
Since $D(F\cup\{z\},m)=D(F,m)\cap N_i(z)$, a vertex $z\in B_n$
outside all these null sets makes $F\cup\{z\}$ good. Hence
\[
 E_F\cap B_n\subseteq\bigcup_{m\geq k_n}E_{F,n,m}\in I.
\]
Countable completeness now gives $E_F=\bigcup_n(E_F\cap B_n)\in I$.
\end{proof}

\subsection{Normal ideals}

\begin{theorem}\label{thm:normal-extraction}
Suppose \(I\) is a proper uniform normal \(\kappa\)-complete ideal and
\(\langle B_n:n<\omega\rangle\) satisfies \eqref{eq:sequence}.
There is \(X\subseteq B=\bigcup_{n<\omega}B_n\) such that
\begin{equation}\label{eq:positive-fcn}
 B\setminus X\in I,\qquad
 \forall F\in[X]^{<\omega}\quad\Gamma_i^X(F)\in I^+.
\end{equation}
\end{theorem}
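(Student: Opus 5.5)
Since $\kappa$ is regular uncountable, we may identify finite subsets of $\kappa$ with their maxima. The plan is to remove from $B$ all vertices that would spoil goodness when adjoined to smaller finite sets, using a diagonal union. For $\alpha<\kappa$ put
\[
 A_\alpha=\bigcup\{E_F:F\in[\alpha+1]^{<\omega}\}.
\]
Each $E_F\in I$ by \Cref{lem:finite-extension}. Since $|[\alpha+1]^{<\omega}|<\kappa$, $\kappa$-completeness gives $A_\alpha\in I$. Normality then gives $\nabla_{\alpha<\kappa}A_\alpha\in I$. Set
\[
 X=B\setminus\nabla_{\alpha<\kappa}A_\alpha,
\]
so that $B\setminus X\in I$.

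The key claim is that every finite $F\subseteq X$ is good, which we prove by induction on $|F|$. The empty set is good. Given $F\subseteq X$ finite and nonempty, let $z=\max F$ and $F'=F\setminus\{z\}$; by induction $F'$ is good. Every element of $F'$ is below $z$, so $F'\in[\alpha+1]^{<\omega}$ for some $\alpha<z$; if $F'=\varnothing$, take $\alpha=0$, which is fine when $z>0$. Since $z\in X$, we have $z\notin A_\alpha$, so $z\notin E_{F'}$. As $z\in B$ and $F'$ is good, $F=F'\cup\{z\}$ is good. The edge case $z=0$ with $F'=\varnothing$ is trivial, since then $F=\{0\}$ and $\{0\}\in[X]^{<\omega}$; we handle it by throwing the single point $0$ out of $X$, which is allowed because $I$ is uniform. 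The same remark covers $F'=\varnothing$ in general: simply let $A_0$ include $E_\varnothing$ as well and remove $0$.

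Next we pass from goodness to positivity of $\Gamma_i^X(F)$. If $F$ is good, then $D(F,m)\in I^+$ for large $m$, and $D(F,m)\subseteq B\cap\bigcap_{x\in F}N_i(x)$. Hence $B\cap\bigcap_{x\in F}N_i(x)\in I^+$. Since $\Gamma_i^X(F)$ differs from this set by a subset of $B\setminus X\in I$, we get $\Gamma_i^X(F)\in I^+$. Note that $N_i(x)$ excludes $x$ itself, so the sets match exactly.

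The main obstacle is the bookkeeping in the diagonal union: we must ensure that each $z\in X$ avoids $E_{F'}$ for every finite $F'$ of smaller ordinals. Indexing by $\alpha<z$ with $F'\subseteq\alpha+1$ handles this. This step is exactly where normality, rather than mere $\kappa$-completeness, is needed.
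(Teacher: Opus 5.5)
Your proof is correct and follows the paper's argument: remove a diagonal union of the exceptional sets $E_F$, prove goodness of finite subsets of $X$ by induction along their increasing enumeration, and obtain positivity of $\Gamma_i^X(F)$ from $D(F,m)\in I^+$ together with $B\setminus X\in I$. The only difference is bookkeeping. The paper codes finite sets by a bijection $q:[\kappa]^{<\omega}\to\kappa$ and intersects with the club of closure points of $q$, using $NS_\kappa\subseteq I$. You instead use $\kappa$-completeness to bundle all $E_F$ with $F\subseteq\alpha+1$ into a single null set $A_\alpha$, and you discard the point $0$. This works equally well, although your paragraph on the edge case $z=0$ is worded confusingly.
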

\begin{proof}
Fix a bijection \(q:[\kappa]^{<\omega}\to\kappa\) and a club
\(C\subseteq\kappa\) such that
\begin{equation}\label{eq:finite-coding-club}
 \beta\in C,\quad F\in[\beta]^{<\omega}
 \quad\Longrightarrow\quad q(F)<\beta.
\end{equation}
Take the limit ordinals closed under the finite-set coding $q$.
Closing an ordinal under $q$ in countably many steps stays below
$\kappa$ by regularity, so these ordinals form a club. By \Cref{lem:finite-extension} and normality,
\[
 Z=\nabla_{\xi<\kappa}E_{q^{-1}(\xi)}\in I.
\]
Set $X=(B\cap C)\setminus Z$. Because $NS_\kappa\subseteq I$,
\[
 B\setminus X=(B\setminus C)\cup(B\cap Z)
              \subseteq(\kappa\setminus C)\cup Z\in I.
\]
Since $B_0\subseteq B$ is positive, $X\in I^+$.

Every finite subset of $X$ is good. Induct along its increasing
enumeration: if $F\subseteq X\cap\beta$ is good and $\beta\in X$,
then $q(F)<\beta$ by \eqref{eq:finite-coding-club}. As $\beta\notin Z$,
it avoids $E_F$, so $F\cup\{\beta\}$ is good.

For finite $F\subseteq X$, choose $m$ with $D(F,m)\in I^+$.
Since $D(F,m)\subseteq B$ and $B\setminus X\in I$, the set
$D(F,m)\cap X\subseteq\Gamma_i^X(F)$ is positive.
\end{proof}

\subsection{Without normality}

\begin{theorem}\label{thm:extract}
Suppose \(I\) is a proper uniform \(\kappa\)-complete ideal and
\(\langle B_n:n<\omega\rangle\) satisfies \eqref{eq:sequence}.
There is \(X\subseteq B\) satisfying \eqref{eq:fcn} in color \(i\).
\end{theorem}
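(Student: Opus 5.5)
The plan is to build $X=\{x_\alpha:\alpha<\kappa\}\subseteq B$ by a transfinite recursion of length $\kappa$. At each stage we avoid the exceptional sets $E_F$ of \Cref{lem:finite-extension}, which keeps every finite subset of $X$ good. At the same time, a bookkeeping schedule places a new point into a designated positive set $D(F,m)$. Normality was used in \Cref{thm:normal-extraction} only to diagonalize over all $\kappa$ many exceptional sets at once. Here we remove fewer than $\kappa$ of them at each stage, which needs only $\kappa$-completeness and uniformity.

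\emph{Bookkeeping.} Fix a bijection $\pi:\kappa\to\kappa\times\kappa$ with $\pi(\alpha)=(\gamma,\delta)$ implying $\gamma\leq\alpha$. Write $X_\alpha=\{x_\beta:\beta<\alpha\}$. When stage $\gamma$ is reached, $X_\gamma$ is already defined. Since $|[X_\gamma]^{<\omega}|<\kappa$, we can fix a surjection $f_\gamma:\kappa\to[X_\gamma]^{<\omega}$ whose fibres all have size $\kappa$.

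\emph{Stage $\alpha$.} Let $\pi(\alpha)=(\gamma,\delta)$ and $F=f_\gamma(\delta)\subseteq X_\gamma\subseteq X_\alpha$. Assume inductively that every finite subset of $X_\alpha$ is good. Then $F$ is good, so by \eqref{eq:good-finite} there is $m$ with $D(F,m)\in I^+$. Put
\[
 N_\alpha=X_\alpha\cup\bigcup\{E_{F'}:F'\in[X_\alpha]^{<\omega}\}.
\]
This set is in $I$: $X_\alpha\in I$ by uniformity, each $E_{F'}\in I$ by \Cref{lem:finite-extension}, and there are fewer than $\kappa$ sets $F'$, so $\kappa$-completeness applies. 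Hence $D(F,m)\setminus N_\alpha\in I^+$ is nonempty, and we choose $x_\alpha$ in it.

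\emph{Maintaining the invariant.} Every finite subset of $X_{\alpha+1}$ is good. Indeed, a finite $F'\subseteq X_{\alpha+1}$ containing $x_\alpha$ has the form $F''\cup\{x_\alpha\}$ with $F''\subseteq X_\alpha$ good. Since $x_\alpha\in B\setminus E_{F''}$, the set $F'$ is good by the definition of $E_{F''}$. At limit stages the invariant is immediate, because finite sets appear at a bounded stage.

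\emph{Verification of \eqref{eq:fcn}.} We have $X\subseteq B$ since each $D(F,m)\subseteq B_m$. The points $x_\alpha$ are distinct since $x_\alpha\notin X_\alpha$, so $|X|=\kappa$. Now fix a finite $F\subseteq X$. Then $F\subseteq X_\gamma$ for some $\gamma$, and there are $\kappa$ many $\delta$ with $f_\gamma(\delta)=F$. For each such $\delta$, the point $x_{\pi^{-1}(\gamma,\delta)}$ lies in $D(F,m)\subseteq\bigcap_{x\in F}N_i(x)$, hence in $\Gamma_i^X(F)$. These are $\kappa$ distinct points, so $|\Gamma_i^X(F)|=\kappa$.

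The only delicate step is this bookkeeping. The targets $F$ are not known in advance, so the schedule must enumerate only subsets of the already constructed $X_\gamma$, and it must revisit each of them cofinally often. The condition $\gamma\leq\alpha$ on $\pi$, together with the size-$\kappa$ fibres of $f_\gamma$, is what secures this.
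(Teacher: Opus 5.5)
Your proof is correct and is essentially the paper's argument. It is a length-$\kappa$ recursion that, at stage $\alpha$, picks $x_\alpha$ in a positive $D(F,m)$ outside the null set $X_\alpha\cup\bigcup_{F'\in[X_\alpha]^{<\omega}}E_{F'}$; that set is null by uniformity, $\kappa$-completeness and \Cref{lem:finite-extension}, so all finite subsets stay good and each finite $F\subseteq X$ is targeted $\kappa$ times. The only difference is the bookkeeping: the paper fixes in advance $h:\kappa\to[\kappa]^{<\omega}$ with fibres of size $\kappa$ and targets $\{x_\xi:\xi\in h(\alpha)\cap\alpha\}$, which avoids choosing your surjections $f_\gamma$ during the recursion, while your pairing with $\gamma\leq\alpha$ achieves the same thing.
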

\begin{proof}
Choose $h:\kappa\to[\kappa]^{<\omega}$ so that every fiber has
cardinality $\kappa$. For $a\in[\kappa]^{<\omega}$, let
\[
 T_a=\{\alpha<\kappa:h(\alpha)=a\text{ and }a\subseteq\alpha\}.
\]
Then $|T_a|=\kappa$, since only boundedly many points of the fiber
fail $a\subseteq\alpha$.

We construct distinct $x_\alpha\in B$ so that every finite subset of
$X_\alpha=\{x_\xi:\xi<\alpha\}$ is good. At stage $\alpha$, put
\[
 F_\alpha=\{x_\xi:\xi\in h(\alpha)\cap\alpha\}.
\]
Choose $m$ with $R=D(F_\alpha,m)\in I^+$. By
\Cref{lem:finite-extension}, uniformity and $\kappa$-completeness,
\begin{equation}\label{eq:exceptional-union}
 E=X_\alpha\cup\bigcup_{F\in[X_\alpha]^{<\omega}}E_F\in I,
\end{equation}
as $|[X_\alpha]^{<\omega}|<\kappa$. Take $x_\alpha\in R\setminus E$.
It is new, adjacent in color $i$ to $F_\alpha$, and extends every
finite good subset of $X_\alpha$ to a good set. This maintains the
induction at successors. At a limit, every finite subset is contained
in an earlier stage.

Put $X=\{x_\alpha:\alpha<\kappa\}$. Given finite $F\subseteq X$,
let $a=\{\xi<\kappa:x_\xi\in F\}$. Distinctness makes $a$ finite.
For $\alpha\in T_a$, we have $h(\alpha)=a\subseteq\alpha$, so
\[
 F_\alpha=\{x_\xi:\xi\in h(\alpha)\cap\alpha\}
          =\{x_\xi:\xi\in a\}=F.
\]
Thus $\{x_\alpha:\alpha\in T_a\}\subseteq\Gamma_i^X(F)$.
As $|T_a|=\kappa$ and the $x_\alpha$ are distinct,
$|\Gamma_i^X(F)|=\kappa$.
\end{proof}

\begin{proof}[Proof of \Cref{thm:ideal-criterion}]
Apply \Cref{lem:sequence} and then \Cref{thm:extract}. If \(I\) is
normal, use \Cref{thm:normal-extraction} instead. Since
\(NS_\kappa\subseteq I\), its positive sets are stationary.
\end{proof}

\section{Local Ideals and the Consistency Theorem}\label{sec:local-models}

\subsection{From quotient order to inclusion}

All forcing notions have a largest condition, denoted by $1$, and
$p\leq q$ means that $p$ is stronger than $q$. For a separative forcing
$\mathbb R$, let $\RO(\mathbb R)$ be its regular-open Boolean completion.
We write $\bbB^+=\bbB\setminus\{0\}$ for a Boolean algebra $\bbB$.
The canonical map $\mathbb R\to\RO(\mathbb R)^+$ is a dense order
embedding. A Boolean embedding is an injective map preserving the
Boolean operations and $0,1$, and $\tau^G$ denotes the value of a
forcing name $\tau$ in $G$.

For a regular uncountable $\rho$, an order is \emph{${<}\rho$-closed}
if every decreasing sequence of length less than $\rho$ has a lower
bound. Thus countable closure is ${<}\omega_1$-closure.

\begin{lemma}\label{lem:closed-tail}
Let $\rho$ be regular uncountable, let
$\calA\subseteq\Pow(\lambda)$ be a Boolean algebra closed under
unions of fewer than $\rho$ members, and let $I$ be a $\rho$-complete
ideal on $\calA$. Suppose $\Phi:\calA/I\to\bbB$ is a Boolean
embedding into a complete Boolean algebra and $E\subseteq\bbB^+$
is ${<}\rho$-closed and dense, with $E\subseteq\ran(\Phi)$. Then
\[
 \mcD=\{A\in\calA\setminus I:\Phi([A]_I)\in E\}
\]
is ${<}\rho$-closed and dense under actual inclusion.
\end{lemma}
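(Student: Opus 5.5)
Density and closure are handled separately. Both rest on one fact: \(\Phi\) is an order embedding, being an injective Boolean homomorphism. Hence for \(A,A'\in\calA\) we have \(\Phi([A]_I)\leq\Phi([A']_I)\) iff \(A\setminus A'\in I\). Also \(\Phi([A]_I)\neq0\) iff \(A\notin I\).

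\emph{Density.} Let \(A\in\calA\setminus I\), so \(\Phi([A]_I)\in\bbB^+\). By density of \(E\), pick \(e\in E\) with \(e\leq\Phi([A]_I)\). Since \(E\subseteq\ran(\Phi)\), write \(e=\Phi([A']_I)\) with \(A'\in\calA\), and note \(A'\notin I\) because \(e\neq0\). Put \(A''=A'\cap A\in\calA\). From \(e\leq\Phi([A]_I)\) we get \(A'\setminus A\in I\), so \([A'']_I=[A']_I\) and \(\Phi([A'']_I)=e\in E\). Thus \(A''\in\mcD\) and \(A''\subseteq A\).

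\emph{Closure.} Let \(\gamma<\rho\) and let \(\langle A_\xi:\xi<\gamma\rangle\) be \(\subseteq\)-decreasing in \(\mcD\). Put \(e_\xi=\Phi([A_\xi]_I)\in E\). These are \(\leq\)-decreasing in \(\bbB^+\), so by \({<}\rho\)-closure of \(E\) there is \(e\in E\) with \(e\leq e_\xi\) for all \(\xi\). Write \(e=\Phi([A']_I)\) with \(A'\in\calA\setminus I\).

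The main task is to trim \(A'\) to an actual subset of every \(A_\xi\) without changing its class. For each \(\xi\), \(e\leq e_\xi\) gives \(N_\xi=A'\setminus A_\xi\in I\), and \(N_\xi\in\calA\) since \(\calA\) is a Boolean algebra. As \(\gamma<\rho\), \(\calA\) is closed under the union \(N=\bigcup_{\xi<\gamma}N_\xi\), and \(\rho\)-completeness of \(I\) gives \(N\in I\). Set \(A''=A'\setminus N\in\calA\). Then \([A'']_I=[A']_I\), so \(\Phi([A'']_I)=e\in E\) and \(A''\notin I\). Hence \(A''\in\mcD\), and \(A''\subseteq A_\xi\) for every \(\xi<\gamma\), so \(A''\) is a lower bound.

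The only real obstacle is this last step of passing from a lower bound in \(\bbB\) to a lower bound under actual inclusion. It uses exactly the two hypotheses that \(\calA\) is closed under unions of fewer than \(\rho\) members and that \(I\) is \(\rho\)-complete on \(\calA\). The assumption \(E\subseteq\ran(\Phi)\) is what allows both the density and closure witnesses to be pulled back into \(\calA\).
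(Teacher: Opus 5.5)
Your proposal is correct and follows essentially the same route as the paper's proof. Density comes from intersecting a pulled-back witness of $e\le\Phi([A]_I)$ with $A$. Closure comes from pulling back a lower bound $e\in E$ and deleting the null union $\bigcup_{\xi<\gamma}(A'\setminus A_\xi)$, using closure of $\calA$ under ${<}\rho$ unions and $\rho$-completeness of $I$.
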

\begin{proof}
For $A\in\calA\setminus I$, choose $e\in E$ below $\Phi([A]_I)$ and $B$ with
$\Phi([B]_I)=e$. Then $\Phi([A\cap B]_I)=e$, so $A\cap B\in\mcD$
and $A\cap B\subseteq A$.

Given decreasing $A_\xi\in\mcD$ for $\xi<\delta<\rho$, choose
$e\in E$ below all $\Phi([A_\xi]_I)$ and $B$ with $\Phi([B]_I)=e$.
For every $\xi<\delta$,
\[
 \Phi([B\setminus A_\xi]_I)=e\wedge\neg\Phi([A_\xi]_I)=0.
\]
Thus $Z=\bigcup_{\xi<\delta}(B\setminus A_\xi)\in I$, and
\[
 B\setminus Z\subseteq\bigcap_{\xi<\delta}A_\xi,
 \qquad \Phi([B\setminus Z]_I)=e.
\]
Hence $B\setminus Z\in\mcD$ is a lower bound.
\end{proof}

\subsection{Local normal extraction}

Here $\ZFC^-$ means ZFC without Power Set, with Collection in place
of Replacement and the well-ordering form of Choice. An internal
sequence is an element of the indicated model. The notation
$W^\omega\subseteq W$ means that every countable sequence of elements
of $W$ in the ambient universe belongs to $W$.
For an ideal on $\calA=\Pow(\kappa)\cap W$, normality over $W$
means closure under diagonal unions of sequences belonging to $W$.
Closure under subsets is restricted to $\calA$.
We write $\operatorname{tc}(x)$ for the transitive closure of a set $x$,
$V_\alpha$ for the $\alpha$th level of the cumulative hierarchy, and
$H(\theta)=\{x:|\operatorname{tc}(\{x\})|<\theta\}$ for an infinite
cardinal $\theta$.

\begin{lemma}\label{lem:local}
Let $W$ be a transitive model of $\ZFC^-$ containing $\kappa$ and
closed under countable sequences. Put $\calA=\Pow(\kappa)\cap W$.
Suppose $I$ is a proper ideal on $\calA$ such that:
\begin{enumerate}
\item $I$ contains the bounded members of $\calA$ and the complements
of clubs in $\kappa$ belonging to $W$.
\item $I$ is closed under countable unions and under diagonal unions
of $\kappa$-sequences of null sets belonging to $W$.
\item For each $\langle A_\xi:\xi<\kappa\rangle\in W$ of members of
$\calA$, the set $\{\xi<\kappa:A_\xi\in I\}$ belongs to $W$.
\item $(\calA\setminus I,\subseteq)$ has an externally countably
closed dense subset.
\end{enumerate}
For every $c:[\kappa]^2\to\omega$ in $W$, there are $i<\omega$ and
$X\in\calA\setminus I$ such that
\[
 \Gamma_i^X(F)\in\calA\setminus I\qquad(F\in[X]^{<\omega}).
\]
\end{lemma}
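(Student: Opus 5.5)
We localize the argument of \Cref{thm:ideal-criterion}, replacing $\Pow(\kappa)$ by $\calA=\Pow(\kappa)\cap W$ and keeping every auxiliary object inside $W$. The proof has three steps: build a frequent sequence as in \Cref{lem:sequence}, show it lies in $W$, and then run \Cref{lem:finite-extension} and \Cref{thm:normal-extraction} inside $W$.

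\textbf{Step 1: a frequent sequence.} Fix $c\in W$ and read $\Freq_i(B_0,B_1)$ with $D$ ranging over $\calA\setminus I$. For $z\in\kappa$ and $D\in\calA$ we have $N_i(z)\cap D\in\calA$, because $c\in W$ and $W$ satisfies Separation. By hypothesis (3), the exceptional set $\{z\in B_0:N_i(z)\cap D\in I\}$ belongs to $W$, since it is $\{z:A_z\in I\}\cap B_0$ for the $W$-sequence $A_z=N_i(z)\cap D$. Hence all exceptional sets in the argument lie in $\calA$, and "belongs to $I$" is meaningful.

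We then repeat the proof of \Cref{lem:sequence} with the externally countably closed dense family of hypothesis (4). In the first paragraph, the lower bounds $C_*,D_*$ exist externally but are members of $\calA$. The decomposition $D_*=(D_*\cap\{x\})\cup\bigcup_n(D_*\cap N_n(x))$ uses only countably many sets from $\calA$, so countable completeness (2) gives the contradiction. The remaining two paragraphs use only countable closure and frequency restriction. This yields $i$ and $B_n\in\calA\setminus I$ satisfying \eqref{eq:sequence}.

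\textbf{Step 2: the sequence is in $W$.} Since $W^\omega\subseteq W$, we get $\langle B_n:n<\omega\rangle\in W$, so $B\in\calA$. Each $D(F,m)$ lies in $W$, and by (3) applied to the sequence $m\mapsto D(F,m)$, the set $\{m:D(F,m)\in I\}$ lies in $W$. So goodness of $F$ is decided by a set in $W$, uniformly in $F$. Applying (3) again to a $W$-sequence indexed by $\kappa$ via the coding $q\in W$ shows that the map $F\mapsto$ ``$F$ is good'' is in $W$. Consequently $E_F\in\calA$, and the whole sequence $\langle E_{q^{-1}(\xi)}:\xi<\kappa\rangle$ belongs to $W$.

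\textbf{Step 3: extraction inside $W$.} \Cref{lem:finite-extension} goes through: the sets $E_{F,n,m}$ lie in $\calA$ by (3), and the unions involved are countable, so (2) gives $E_F\in I$. Choose the bijection $q$ and the club $C$ of \eqref{eq:finite-coding-club} inside $W$; this is possible because $W\models\ZFC^-$ and $\kappa$ is regular in $W$. By (1), $\kappa\setminus C\in I$. By (2), $Z=\nabla_\xi E_{q^{-1}(\xi)}\in I$, since the sequence lies in $W$. Set $X=(B\cap C)\setminus Z\in\calA$. The induction in \Cref{thm:normal-extraction} shows that every finite subset of $X$ is good, and $B\setminus X\in I$. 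For finite $F\subseteq X$, some $D(F,m)$ is in $\calA\setminus I$. Then $D(F,m)\cap X$ is positive, and so is $\Gamma_i^X(F)\in\calA$. Finally $X$ is positive because $B_0\subseteq B$.

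\textbf{Main obstacle.} The delicate point is Step 2: checking that the good-set predicate and the family $\langle E_F\rangle$ are definable in $W$. These objects are defined from $I$, which is not in $W$. Hypothesis (3) is exactly what imports the relevant traces of $I$ into $W$. It must be applied first to the sequence $m\mapsto D(F,m)$, to decide goodness of a single $F$, and then to a $\kappa$-indexed $W$-sequence, to obtain $\langle E_F\rangle$, which requires coding the countable index $m$ together with $F$ via $q$.
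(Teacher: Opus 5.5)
Your proposal is correct and follows essentially the same route as the paper. You localize \Cref{lem:sequence} using (3) for the exceptional sets and (4) for lower bounds, use $W^\omega\subseteq W$ to put $\langle B_n:n<\omega\rangle$ in $W$, apply (3) to a $\kappa$-indexed coding of the pairs $(F,m)$ to get the good sets and $\langle E_F\rangle$ into $W$, and finish with the diagonal union over a club chosen in $W$. The only differences are cosmetic: your preliminary use of (3) on $m\mapsto D(F,m)$ for a single $F$ is subsumed by the coded application, and the paper codes the pairs with a separate bijection $h:\kappa\to[\kappa]^{<\omega}\times\omega$ in $W$ rather than with $q$.
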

\begin{proof}
In this proof, the quantifier over $D$ in \eqref{eq:frequent} ranges over
$\calA\setminus I$.
The proof of \Cref{lem:sequence} can be carried out inside $\calA$.
Indeed, $W$ is closed under external countable sequences, so $\calA$
is closed under their unions. It regards $\kappa$ as regular
uncountable, has the needed finite-set codings, and computes
$[\kappa]^{<\omega}$ correctly.
For $C,D\in\calA$ and $i<\omega$, condition (3) applied to
$\langle N_i(x)\cap D:x<\kappa\rangle$ gives
\[
 \{x\in C:N_i(x)\cap D\in I\}\in W.
\]
Condition (3) thus supplies the exceptional sets in both decreasing
constructions of \Cref{lem:sequence}. Condition (4) supplies refinements
and countable lower bounds, and (1)--(2) rule out the resulting partition
into a singleton and color classes. We obtain $B_n\in\calA\setminus I$
with
$\Freq_i(B_n,B_m)$ for $n<m$. Since $W^\omega\subseteq W$,
$\langle B_n:n<\omega\rangle\in W$.

Put $B=\bigcup_nB_n$ and
$D(F,m)=B_m\cap\bigcap_{x\in F}N_i(x)$.
Choose in $W$ a bijection
$h:\kappa\to[\kappa]^{<\omega}\times\omega$ and define
$R_\xi=D(F,m)$ when $h(\xi)=(F,m)$. The sequence
$\langle R_\xi:\xi<\kappa\rangle$ belongs to $W$. By (3),
$T=\{\xi<\kappa:R_\xi\in I\}$ belongs to $W$. Thus the family
\[
 \mathcal H=\{F\in[\kappa]^{<\omega}:
       \exists m_0<\omega\ \forall m\geq m_0\quad
       h^{-1}(F,m)\notin T\}
\]
belongs to $W$ by Separation. Since
$R_{h^{-1}(F,m)}=D(F,m)$, this is exactly the family of good finite
sets.
Define
\[
 E_F=\{z\in B:F\cup\{z\}\notin\mathcal H\}
       \quad(F\in\mathcal H),\qquad
 E_F=\varnothing\quad(F\notin\mathcal H).
\]
The sequence $\langle E_F:F\in[\kappa]^{<\omega}\rangle$ belongs to
$W$. For a good $F$, local frequency applied to $D(F,m)$ gives
\[
 \{z\in B_n:N_i(z)\cap D(F,m)\in I\}\in I
 \quad(m\geq\max\{m_F,n+1\}).
\]
By (3) these sets belong to $\calA$. The containment in
\Cref{lem:finite-extension} bounds $E_F\cap B_n$ by their countable
union. Applying (2) over $m$ and then $n$ gives $E_F\in I$. For
nongood $F$, this follows from $E_F=\varnothing$.

Choose in $W$ a bijection $q:[\kappa]^{<\omega}\to\kappa$ and a club
$C\subseteq\kappa$ satisfying \eqref{eq:finite-coding-club}. The sequence
$\langle E_{q^{-1}(\xi)}:\xi<\kappa\rangle$ belongs to $W$, so (2) gives
\[
 Z=\nabla_{\xi<\kappa}E_{q^{-1}(\xi)}\in I.
\]
Set $X=(B\cap C)\setminus Z$. Then $X\in\calA$ and
\[
 B\setminus X=(B\setminus C)\cup(B\cap Z)\in I.
\]
Since $B_0\subseteq B$ is positive, $X\notin I$.
The increasing-enumeration argument in \Cref{thm:normal-extraction}
shows that every finite $F\subseteq X$ is good: if
$F\subseteq X\cap\beta$ is good, then $q(F)<\beta$ and
$\beta\notin Z$ imply $\beta\notin E_F$.
Choose $m$ with $D(F,m)\notin I$. Since $D(F,m)\subseteq B$,
\[
 D(F,m)\cap X\notin I,\qquad
 D(F,m)\cap X\subseteq\Gamma_i^X(F)\in\calA.
\]
This proves the conclusion.
\end{proof}

\subsection{Ramsey-like models}\label{sec:ramsey-models}

A \emph{weak $\kappa$-model} is a set $M$ of cardinality $\kappa$
with $\kappa+1\subseteq M$ and $(M,\in)\models\ZFC^-$.
A transitive weak $\kappa$-model closed under sequences of length
less than $\kappa$ is a \emph{$\kappa$-model}.
Every weak $\kappa$-model $M\prec H(\kappa^+)$ is transitive.
Indeed, for nonempty $a\in M$, elementarity supplies in $M$ a
surjection from $\kappa$ onto $a$. Since $\kappa\subseteq M$,
its values belong to $M$, so $a\subseteq M$.

For transitive $M$, an $M$-ultrafilter $U$ on $\kappa$ is a uniform
ultrafilter on $\Pow(\kappa)\cap M$ which is $\kappa$-complete and
normal for sequences belonging to $M$ \cite[Definition~2.1]{Gitman}.
It is \emph{weakly amenable} if $U\cap a\in M$ whenever $a\in M$
has cardinality at most $\kappa$ in $M$
\cite[Definition~2.4]{Gitman}.
An elementary embedding $j:M\to N$ between transitive models is
\emph{$\kappa$-powerset preserving} if its critical point is $\kappa$
and
\[
 \Pow(\kappa)\cap M=\Pow(\kappa)\cap N.
\]
For a well-founded normal ultrapower, this is equivalent to weak
amenability of its $M$-ultrafilter \cite[Proposition~2.6]{Gitman}.

\begin{definition}\label{def:height-ramsey}
Following \cite[Definition~9.4]{HolyLucke}, a regular cardinal
$\kappa>\omega_1$ is \emph{$T^{\kappa^+}_{\omega_1}$-Ramsey} if,
for every $x\in H(\kappa^+)$, there is a weak $\kappa$-model
\[
 x\in M\prec H(\kappa^+),\qquad M^\omega\subseteq M,
\]
carrying a weakly amenable $M$-ultrafilter on $\kappa$. Here $T$
denotes the trivial additional property in Holy--L\"ucke's notation.
\end{definition}

Countable closure makes such an ultrafilter externally countably
complete, so its ultrapower is well founded. By the Ramsey
characterization in \cite[Proposition~2.8(3)]{Gitman}, the property
implies Ramseyness, hence inaccessibility.
Holy--Schlicht's $\omega_1$-Ramsey property requires such elementary
models at arbitrarily large regular heights
\cite[Definition~5.1 and Theorem~5.5(c)]{HolySchlicht}.
Every $\omega_1$-Ramsey cardinal is a stationary limit of cardinals
$\gamma$ which are $T^{\gamma^+}_{\omega_1}$-Ramsey. The strict
consistency comparison follows from \cite[Lemma~9.14 and the
discussion before Lemma~9.13]{HolyLucke}.

\Needspace{10\baselineskip}
\begin{lemma}\label{lem:name-agreement}
Suppose $M,N$ are transitive models of $\ZFC^-$ containing the
infinite cardinal $\kappa$, and
$\Pow(\kappa)\cap M=\Pow(\kappa)\cap N$.
Then $M$ and $N$ have the same sets of hereditary size at most
$\kappa$, as computed in the respective models. If a forcing partial
order $P$, including its order relation, belongs to this common
collection and $G$ is $P$-generic over both models, then
\[
 \Pow(\kappa)\cap M[G]=\Pow(\kappa)\cap N[G].
\]
\end{lemma}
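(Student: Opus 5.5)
We prove the first assertion by coding hereditarily small sets as subsets of $\kappa$. Fix a pairing bijection $\pi:\kappa\times\kappa\to\kappa$ that is absolute between the models, say Gödel's canonical pairing. It is definable from $\kappa$ alone, so it lies in both $M$ and $N$ and agrees in them. In $M$, a set $a$ with $|\operatorname{tc}(\{a\})|\leq\kappa$ is coded as follows: choose a bijection $e$ between $\operatorname{tc}(\{a\})$ and a subset of $\kappa$, and let $E_a=\{\pi(\alpha,\beta):e^{-1}(\alpha)\in e^{-1}(\beta)\}$ together with a marker for the point $e(a)$. The relation coded by $E_a$ is well founded and extensional, and $a$ is its Mostowski collapse.

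Conversely, let $S\subseteq\kappa$ code a well-founded extensional relation with a top point. We must check that $N$ agrees with $M$ on whether $S$ codes such a relation and on its collapse.
\begin{itemize}
\item Extensionality and having a top point are first-order properties of $S$ over $\kappa$, so they are absolute.
\item Well-foundedness is absolute between transitive models of $\ZFC^-$ and $V$: it is witnessed by a rank function, and the failure of well-foundedness is witnessed by a descending sequence. This holds because both models satisfy enough Choice (DC follows from well-ordering) and Collection to construct rank functions by recursion.
\item The Mostowski collapse is computed by transfinite recursion, so it is absolute as well.
\end{itemize}
Hence every hereditarily-size-$\leq\kappa$ set of $M$ is the collapse of some $S\in\Pow(\kappa)\cap M=\Pow(\kappa)\cap N$, and is therefore in $N$ and of hereditary size $\leq\kappa$ there. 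Symmetry gives equality of the two collections.

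For the second assertion, $P$ now lies in both models, and since its order relation is included in the coding, the order is the same object in both. Every $X\in\Pow(\kappa)\cap M[G]$ has the form $\tau^G$ for a name $\tau\in M$. We replace $\tau$ by a nice name
\[
\sigma=\bigcup_{\alpha<\kappa}\{\check\alpha\}\times A_\alpha,
\]
where each $A_\alpha\subseteq P$ is an antichain chosen in $M$ so that $\sigma^G=\tau^G$. This replacement needs, in $M$, the forcing theorem for $\ZFC^-$ models and maximal antichains below the Boolean values $\|\check\alpha\in\tau\|$.

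If $|P|\leq\kappa$ in $M$, this nice name is a subset of $\kappa\times P$ of size $\leq\kappa$, so it has hereditary size $\leq\kappa$ and lies in $N$ by the first part. Then $X=\sigma^G\in N[G]$, since evaluating names is absolute. Symmetry gives the reverse inclusion.

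\textbf{Main obstacle.} The hypothesis only says that $P$ has hereditary size $\leq\kappa$, so $|P|\leq\kappa$ and nice names are indeed small. The delicate point is the forcing theorem and the existence of nice names in models of $\ZFC^-$ without Power Set. Two issues arise:
\begin{itemize}
\item The Boolean values and the maximal antichains must exist as sets in $M$. They do, since $P$ is a set and $\Pow(P)$ is not needed: each antichain can be chosen by well-ordering $P$ and using Separation to define the set of conditions that decide $\check\alpha\in\tau$.
\item The definability of forcing must hold for $\ZFC^-$. This is known to hold, for example by Gitman--Hamkins--Johnstone, and the forcing theorem is fine for such well-behaved $P$ of size $\leq\kappa$.
\end{itemize}
I would state this as the place where care is needed and argue it directly via the dense-set characterization: $p\Vdash\check\alpha\in\tau$ if and only if the set of conditions below $p$ that have some $\langle\check\alpha',q\rangle\in\tau$ with $\alpha'=\alpha$ and $r\leq q$ is dense below $p$.
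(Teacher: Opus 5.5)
Your argument is correct and is essentially the paper's proof. You code $\operatorname{tc}(\{x\})$ by a subset of $\kappa$ using the absolute ordinal pairing and recover $x$ in the other model by Mostowski collapse; you then use the first part to transfer a hereditarily small name for a subset of $\kappa$. The paper uses the name $\{(\check\alpha,p):\alpha<\kappa,\ p\in P,\ p\Vdash_P\check\alpha\in\tau\}$ rather than a nice name, which avoids choosing maximal antichains.

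One caution concerns your closing ``dense-set characterization'' of $p\Vdash\check\alpha\in\tau$. It is valid only when the domain of $\tau$ consists of check names. In general you need conditions $r\leq q$ with $r\Vdash\rho=\check\alpha$ for some $(\rho,q)\in\tau$. So rely on the forcing theorem for $\ZFC^-$ models as a black box, as the paper does, rather than on that characterization.
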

\begin{proof}
If $x\in M$ has hereditary size at most $\kappa$ in $M$, code
$\operatorname{tc}(\{x\})$, with a distinguished point for $x$, by a
well-founded extensional relation on a subset of $\kappa$. The
canonical pairing of ordinals is absolute between transitive models
and maps $\kappa\times\kappa$ into $\kappa$. It codes the relation,
its domain, and the distinguished point by a subset of $\kappa$,
which also belongs to $N$. The relation is actually well founded,
so its Mostowski collapse in $N$ recovers $x$ with the same size
bound. The argument is symmetric.

For $A=\tau^G\subseteq\kappa$ in $M[G]$, form in $M$ the name
\[
 \sigma=\{(\check\alpha,p):\alpha<\kappa,\ p\in P,
                 \ p\Vdash_P\check\alpha\in\tau\}.
\]
The truth lemma gives $\sigma^G=A$. Since $P$ and the check names have
hereditary size at most $\kappa$, so does $\sigma$. The first assertion
puts $\sigma$ in $N$, hence $A\in N[G]$. Reverse the roles of $M,N$
for the other inclusion.
\end{proof}

\phantomsection\label{lem:atomic-forcing}
Atomic forcing statements are absolute between transitive models of
$\ZFC^-$ containing the same forcing poset, its order, and the relevant
names. This follows by simultaneous induction on the ranks of the
names: the recursive clauses quantify only over conditions in the
common poset and pairs in the names. The same holds for their
negations, since $p\Vdash\neg\varphi$ means that no $q\leq p$
forces $\varphi$.

\subsection{The seed ideal}\label{sec:seed-ideal}

For a $P\times Q$-name $\sigma$ and a $P$-generic filter $G$, define
recursively
\[
 \sigma_G=
 \{(\rho_G,q):\exists p\in G\ ((\rho,(p,q))\in\sigma)\}.
\]
The remaining $Q$-name $\sigma_G$ satisfies, by induction on name rank,
\[
 (\sigma_G)^H
   =\{(\rho_G)^H:\exists(p,q)\in G\times H\ ((\rho,(p,q))\in\sigma)\}
   =\sigma^{G\times H}.
\]
If $\sigma,P,Q\in N$ and $G$ is generic over $N$, then $\sigma_G\in N[G]$.

\begin{lemma}\label{lem:seed-ideal}
Let $j:M\to N$ be a well-founded ultrapower embedding with critical
point $\kappa$, by an $M$-ultrafilter $U$ on $\kappa$, where $M,N$
are transitive weak $\kappa$-models. Let $P\in M\cap N$ be a separative forcing of
hereditary size at most $\kappa$ in both models. Suppose that in $N$
there is an order isomorphism
\[
 \pi:j(P)\longrightarrow P\times Q,\qquad
 \pi(j(p))=(p,1_Q)\quad(p\in P),
\]
where $Q$ is separative. We use this identification for conditions
and transport names recursively along it.
Let $G\subseteq P$ be generic over $V$, and put
\[
 W=M[G],\qquad \calA=\Pow(\kappa)\cap W,\qquad
 \bbB=\RO(Q)^{V[G]}.
\]
Write $\iota:Q\to\bbB^+$ for the canonical dense embedding, so
\[
 \iota(q)=\|\check q\in\dot H\|_{\bbB},\qquad
 \iota(q)\leq\iota(q')\quad\Longleftrightarrow\quad q\leq q'.
\]
For $A=\tau^G\in\calA$, with $\tau\in M$, define
\begin{equation}\label{eq:local-seed}
 b_A=\bigl\|\check\kappa\in j(\tau)_G\bigr\|_{\bbB},
 \qquad I=\{A\in\calA:b_A=0\}.
\end{equation}
This is independent of $\tau$, and $A\mapsto b_A$ is a Boolean
homomorphism with the following properties.
\begin{enumerate}
\item $I$ is proper and contains the bounded members of $\calA$ and
complements of clubs in $\kappa$ belonging to $W$. It is closed under
unions of null sets indexed by internal sequences of length less than
$\kappa$, and under diagonal unions of internal $\kappa$-sequences
of null sets.
\item If $j$ is $\kappa$-powerset preserving, then for every
$\langle A_\xi:\xi<\kappa\rangle\in W$ from $\calA$, its null-index
set $\{\xi<\kappa:A_\xi\in I\}$ belongs to $W$.
\item Every $\iota(q)$, for $q\in Q$, equals $b_A$ for some
$A\in\calA$. If $B\in\calA$ and $\iota(q)\leq b_B$, this $A$
can be chosen with $A\subseteq B$.
\end{enumerate}
\end{lemma}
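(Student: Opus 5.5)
We work throughout in $V[G]$ and read everything through the product factorization $j(P)\cong P\times Q$. First, independence of $\tau$ and the homomorphism property. If $\tau^G=\tau'^G$ with $\tau,\tau'\in M$, then some $p\in G$ forces $\tau=\tau'$ over $M$. By elementarity $j(p)=(p,1_Q)$ forces $j(\tau)=j(\tau')$ over $N$. Since $p\in G$, in $N[G]$ every condition of $Q$ forces $j(\tau)_G=j(\tau')_G$. Now $Q\in N$, $\bbB$ is computed in $V[G]$, and the atomic-forcing absoluteness remark transfers "$q\Vdash\check\kappa\in j(\tau)_G$" between $N[G]$ and $V[G]$ (the names $j(\tau)_G$ lie in $N[G]$). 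So we may take the Boolean value in $V[G]$ as the join of the $\iota(q)$ for those $q$ that force membership over $N[G]$. The two names therefore give the same Boolean value.

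Preservation of complements, finite intersections, $0$ and $1$ follows the same way. Take names $\tau\cap\tau'$ and $\check\kappa\setminus\tau$ in $M$; $j$ commutes with these operations, and the map $\sigma\mapsto\sigma_G$ respects them. Thus $b_{A\cap A'}=b_A\wedge b_{A'}$ and $b_{\kappa\setminus A}=\neg b_A$.

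For (1), properness is immediate since $j(\check\kappa)=\check{j(\kappa)}\ni\kappa$, which gives $b_\kappa=1$. If $A\subseteq\alpha<\kappa$, some $p\in G$ forces $\tau\subseteq\check\alpha$; then $j(p)$ forces $j(\tau)\subseteq\alpha$, so $\kappa\notin j(\tau)_G$. For a club $C\in W$, the forcing relation puts $j(\text{name of }C)$ as a club in $j(\kappa)$ that contains a cofinal subset of $\kappa$, namely $C$ itself, since $j(C)\cap\kappa=C$ by the same argument. Hence $\kappa\in j(C)$, and the complement of $C$ is null.

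For a small union $\bigcup_{\xi<\delta}A_\xi$ with $\delta<\kappa$ and an internal name sequence, $j$ fixes the index set. The Boolean value of the union is then the join $\bigvee_\xi b_{A_\xi}$, because $\kappa\in j(\bigcup)$ iff $\kappa\in j(A_\xi)$ for some $\xi<\delta=j(\delta)$. For a diagonal union, $\kappa\in j(\nabla A_\xi)$ iff $\kappa\in j(\vec A)_\eta$ for some $\eta<\kappa$, and $j(\vec A)_\eta=j(A_\eta)$ for $\eta<\kappa$. So again $b=\bigvee_{\eta<\kappa}b_{A_\eta}=0$.

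Item (2) is where powerset preservation is needed, and I expect it to be the main obstacle. Given a name $\vec\tau\in M$ for $\langle A_\xi\rangle$, the null set $\{\xi:b_{A_\xi}=0\}$ equals the set of $\xi$ such that no $q\in Q$ forces $\check\kappa\in j(\vec\tau)(\xi)_G$. This set is definable in $N[G]$ from $j(\vec\tau)_G$ and $Q$, using atomic absoluteness to pass from $V[G]$ to $N[G]$. Hence it is a subset of $\kappa$ lying in $N[G]$. By powerset preservation, $\Pow(\kappa)\cap M=\Pow(\kappa)\cap N$. Applying \Cref{lem:name-agreement} with the forcing $P$, which is of hereditary size at most $\kappa$ in both models, gives $\Pow(\kappa)\cap N[G]=\Pow(\kappa)\cap M[G]=\calA\subseteq W$. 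The delicate point is checking that the forcing relation used is really computed inside $N[G]$ and agrees with $V[G]$'s $\bbB$; the atomic-forcing remark is designed exactly for this.

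For (3), fix $q\in Q$. Since $N$ is an ultrapower, $q=j(f)(\kappa)$ for some $f\in M$ with $f:\kappa\to P$-relevant objects. Using $\pi$, take $f(\alpha)$ to be a $P$-name-like condition so that $j(f)(\kappa)$ corresponds to $(1,q)$. Then define in $M$ the $P$-name $\tau=\{(\check\alpha,r):r\in f(\alpha)\text{-component}\}$, which yields $\kappa\in j(\tau)_G$ iff $q\in H$. Concretely, I would represent $q$ through a $j(P)$-condition $j(g)(\kappa)$ mapped by $\pi$ to $(1_P,q)$. Then set $\tau=\{(\check\alpha,g(\alpha)):\alpha<\kappa\}$, so that $j(\tau)_G$ contains $\kappa$ exactly when $q\in H$, and hence $b_{\tau^G}=\iota(q)$. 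For the relative version, replace $A$ by $A\cap B$; its value is $\iota(q)\wedge b_B=\iota(q)$.
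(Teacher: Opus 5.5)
Your proposal is correct and follows the paper's proof. The steps match: independence via $p\Vdash\tau=\tau'$ pushed through $j(p)=(p,1_Q)$ and atomic absoluteness, joins at $\kappa$ for small and diagonal unions, the null-index set defined in $N[G]$ and pulled into $W$ by \Cref{lem:name-agreement}, and $X_q$ built from $g$ with $j(g)(\kappa)=\pi^{-1}(1_P,q)$. You compute Boolean values directly by product forcing where the paper uses the lifts $j_H^+:M[G]\to N[G\times H]$, which is only a cosmetic difference. Two small points to tidy: in (2), first convert a name for $\vec A$ into an internal sequence of names $\tau_\xi$ (as the paper does) so that $j(\vec\tau)(\xi)=j(\tau_\xi)$ makes sense; in (3), make $g$ everywhere $P$-valued.
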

\begin{proof}
Normality over $M$ gives $[\mathrm{id}]_U=\kappa$, so
\begin{equation}\label{eq:normal-form}
 N=\{j(f)(\kappa):f\in M,\ \operatorname{dom}(f)=\kappa\}.
\end{equation}
For $H\subseteq Q$ generic over $V[G]$, the product $K=G\times H$
is $j(P)$-generic over $N$ and contains $j``G$. The lifting criterion
gives
\[
 j_H^+:M[G]\longrightarrow N[K],\qquad
 j_H^+(\tau^G)=j(\tau)^K=(j(\tau)_G)^H.
\]
If $\tau^G=\sigma^G$, the truth lemma gives $p\in G$ with
$M\models p\Vdash_P\tau=\sigma$. Elementarity and product forcing
give
\[
 \begin{aligned}
 N&\models(p,1_Q)\Vdash_{P\times Q}j(\tau)=j(\sigma),\\
 N[G]&\models 1_Q\Vdash_Q j(\tau)_G=j(\sigma)_G.
 \end{aligned}
\]
The last assertion is absolute to $V[G]$, so \eqref{eq:local-seed}
is independent of the name. Since $M,j$ are sets, $A\mapsto b_A$
is a set function.

Elementarity of the lifts gives
\[
 b_\varnothing=0,\qquad b_\kappa=1,\qquad
 b_{\kappa\setminus A}=\neg b_A,\qquad
 b_{A\cap B}=b_A\wedge b_B.
\]
Thus $I$ is proper. Bounded sets are null because the lifts fix their
bounds below $\kappa$. For internal sequences, evaluation at $\kappa$
gives
\[
 \begin{aligned}
 b_{\bigcup_{\xi<\delta}A_\xi}
   &=\bigvee_{\xi<\delta}b_{A_\xi} &&(\delta<\kappa),\\
 b_{\nabla_{\xi<\kappa}A_\xi}
   &=\bigvee_{\xi<\kappa}b_{A_\xi}.
 \end{aligned}
\]
Indeed, $j_H^+(\vec A)(\xi)=j_H^+(A_\xi)$ for $\xi<\kappa$, and
\[
 \kappa\in j_H^+\Bigl(\nabla_{\xi<\kappa}A_\xi\Bigr)
 \quad\Longleftrightarrow\quad
 \exists\xi<\kappa\ \bigl(\kappa\in j_H^+(A_\xi)\bigr).
\]
These identities give the stated completeness and normality. If
$C\in W$ is club, $j_H^+(C)$ is closed and contains the cofinal set
$C$ below $\kappa$. Hence $\kappa\in j_H^+(C)$ and
$b_{\kappa\setminus C}=0$, proving (1).

For (2), assume $j$ is $\kappa$-powerset preserving. By
\Cref{lem:name-agreement},
\begin{equation}\label{eq:extension-agreement}
 \Pow(\kappa)\cap M[G]=\Pow(\kappa)\cap N[G].
\end{equation}
Fix $\vec A=\langle A_\xi:\xi<\kappa\rangle\in W$ and a name
$\sigma\in M$ for it. In $M$, define
\[
 \tau_\xi=\{(\check\alpha,r):\alpha<\kappa,\ r\in P,
       \ r\Vdash_P\check\alpha\in\sigma(\check\xi)\}.
\]
Here $\check\alpha\in\sigma(\check\xi)$ abbreviates
$\exists x\,(\langle\check\xi,x\rangle\in\sigma\land\check\alpha\in x)$.
The truth lemma gives $\tau_\xi^G=A_\xi$, and Replacement gives
$\vec\tau=\langle\tau_\xi:\xi<\kappa\rangle\in M$. Thus
\[
 j(\vec\tau)\restriction\kappa
       =\langle j(\tau_\xi):\xi<\kappa\rangle\in N,\qquad
 \langle j(\tau_\xi)_G:\xi<\kappa\rangle\in N[G].
\]
Separation in $N[G]$ gives
\begin{equation}\label{eq:null-section}
 E=\{\xi<\kappa:N[G]\models
       1_Q\Vdash_Q\check\kappa\notin j(\tau_\xi)_G\}\in N[G].
\end{equation}
Atomic forcing absoluteness identifies $E$ in $V[G]$ with
$\{\xi<\kappa:b_{A_\xi}=0\}$. Equation
\eqref{eq:extension-agreement} puts this set in $W$.

For (3), fix $q\in Q$. By \eqref{eq:normal-form} and \L{}o\'s's
theorem, choose $f_q:\kappa\to P$ in $M$ with
$j(f_q)(\kappa)=(1_P,q)$. Replacing values outside $P$ by $1_P$
makes the representative everywhere $P$-valued. The name
\[
 \dot X_q=\{(\check\alpha,f_q(\alpha)):\alpha<\kappa\}\in M
\]
has value $X_q=\{\alpha<\kappa:f_q(\alpha)\in G\}\in\calA$.
For every lift,
\[
 \kappa\in j_H^+(X_q)
 \quad\Longleftrightarrow\quad (1_P,q)\in G\times H
 \quad\Longleftrightarrow\quad q\in H.
\]
Hence $b_{X_q}=\iota(q)$. If $\iota(q)\leq b_B$, then
$A=B\cap X_q$ satisfies $A\subseteq B$ and $b_A=\iota(q)$.
\end{proof}

\begin{lemma}\label{lem:closed-models}
In the setup of \Cref{lem:seed-ideal}, let $\rho\leq\kappa$ be
regular uncountable. Suppose $M^{<\rho}\subseteq M$, $P$ adds no
sequences of length less than $\rho$ of ground-model objects, and
$N$ regards $Q$ as ${<}\rho$-closed. Then in $V[G]$,
$W^{<\rho}\subseteq W$, the ideal $I$ is $\rho$-complete, and
\[
 \mcD=\{A\in\calA\setminus I:b_A\in\iota[Q]\}
\]
is ${<}\rho$-closed and dense under actual inclusion.
\end{lemma}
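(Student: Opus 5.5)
The plan has three parts, taken in order: the closure of $W$, the completeness of $I$, and the closed dense family via \Cref{lem:closed-tail}.

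\emph{Closure of $W$.} Work in $V[G]$ and fix a sequence $\langle x_\xi:\xi<\delta\rangle$ of elements of $W=M[G]$ with $\delta<\rho$. Each $x_\xi$ equals $\tau_\xi^G$ for some $\tau_\xi\in M$, and the choice can be made in $V[G]$. The sequence $\langle\tau_\xi:\xi<\delta\rangle$ consists of ground-model objects, so by the hypothesis on $P$ it lies in $V$. Then $M^{<\rho}\subseteq M$ puts it in $M$. Hence $\langle\tau_\xi^G:\xi<\delta\rangle\in M[G]$, which gives $W^{<\rho}\subseteq W$.

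\emph{Completeness of $I$.} Any family $\langle A_\xi:\xi<\delta\rangle$ of null sets with $\delta<\rho\leq\kappa$ is now internal to $W$. \Cref{lem:seed-ideal}(1) therefore gives $\bigcup_\xi A_\xi\in I$. Closure under subsets holds because $b$ is monotone. So $I$ is $\rho$-complete on $\calA$. The algebra $\calA$ is closed under unions of fewer than $\rho$ members by the same internality.

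\emph{The dense family.} I apply \Cref{lem:closed-tail} with $\calA$, $I$, $\bbB=\RO(Q)^{V[G]}$, and $\Phi([A]_I)=b_A$. This $\Phi$ is a Boolean embedding of $\calA/I$: it is a homomorphism by \Cref{lem:seed-ideal}, and injective by the definition of $I$. For $E$ I take $\iota[Q]$. By \Cref{lem:seed-ideal}(3), $E\subseteq\ran(\Phi)$, and $E$ is dense in $\bbB^+$. What remains is that $E$ is ${<}\rho$-closed in $V[G]$, i.e.\ that $Q$ is ${<}\rho$-closed in $V[G]$. This is the step I expect to be the main obstacle.

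For it, take a decreasing $\langle q_\xi:\xi<\delta\rangle$ in $Q$ with $\delta<\rho$, lying in $V[G]$. The $q_\xi$ are elements of $N$, hence ground-model objects. By the hypothesis on $P$, the sequence lies in $V$, so it is represented in $M$ by \eqref{eq:normal-form}. I would transfer it into $N$ along one of two routes. The first is to code it by a function $g\in M$ from $\delta$ to representatives $f_\xi$, using $M^{<\rho}\subseteq M$. Then $j(g)\restriction\delta=\langle j(f_\xi):\xi<\delta\rangle\in N$, since $\delta<\kappa$ is fixed by $j$, and evaluating at $\kappa$ recovers $\langle q_\xi\rangle\in N$. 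The second is to use $N$'s closure under short sequences, which follows from $M^{<\rho}\subseteq M$ for ultrapowers. Once the sequence is in $N$, the fact that $N$ regards $Q$ as ${<}\rho$-closed supplies a lower bound $q\in Q$. Since $\iota$ is an order embedding, $\iota(q)\leq\iota(q_\xi)$ for all $\xi$.

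With these hypotheses checked, \Cref{lem:closed-tail} yields exactly that $\mcD$ is ${<}\rho$-closed and dense under actual inclusion.
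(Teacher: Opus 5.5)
Your proposal is correct and follows essentially the same route as the paper. Short sequences of names are pulled back to $V$ and then into $M$ to get $W^{<\rho}\subseteq W$, and this makes the completeness in \Cref{lem:seed-ideal}(1) external. A short decreasing sequence in $Q$ is moved into $N$ using $M^{<\rho}\subseteq M$ and \eqref{eq:normal-form}, and then \Cref{lem:closed-tail} is applied with $E=\iota[Q]$. Your first transfer route, via $j(g)=\langle j(f_\xi):\xi<\delta\rangle$, is just the transpose of the paper's function $h(\alpha)=\langle f_\xi(\alpha):\xi<\delta\rangle$ with $j(h)(\kappa)=\langle q_\xi:\xi<\delta\rangle$, and either works.
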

\begin{proof}
The target $N$ is externally closed under sequences of length less
than $\rho$. Given $y_\xi=j(f_\xi)(\kappa)$ for $\xi<\delta<\rho$,
closure of $M$ puts $\langle f_\xi:\xi<\delta\rangle$ in $M$.
The function $h(\alpha)=\langle f_\xi(\alpha):\xi<\delta\rangle$
belongs to $M$, and $j(h)(\kappa)=\langle y_\xi:\xi<\delta\rangle$.

For a sequence of length $\delta<\rho$ from $W$ in $V[G]$, choose
$M$-names for its terms. Their sequence belongs to $V$, since $P$
adds no such sequences, and then to $M$ by closure. Evaluating it
puts the original sequence in $W$. Thus $W^{<\rho}\subseteq W$.
An external decreasing sequence of length less than $\rho$ from the
fixed set $Q$ similarly belongs first to $V$, then to $N$. It has a
lower bound in $Q$ by the closure computed in $N$.

Closure of $W$ makes $\calA$ closed under unions of fewer than
$\rho$ members and makes the completeness in
\Cref{lem:seed-ideal}(1) external. The induced Boolean embedding
\[
 \Phi:\calA/I\longrightarrow\RO(Q)^{V[G]},\qquad
 \Phi([A]_I)=b_A
\]
has $\iota[Q]$ in its range by part (3). Apply
\Cref{lem:closed-tail} with $E=\iota[Q]$.
\end{proof}

\subsection{The stationary collapse theorem}

\begin{lemma}\label{lem:ambient-stationarity}
Suppose $P\in H(\kappa^+)$ has size at most $\kappa$ and preserves
the regularity of $\kappa$. Let $M\prec H(\kappa^+)$ be a weak
$\kappa$-model containing $P$, and let $G\subseteq P$ be generic over
$V$. Then
\[
 M[G]\prec H(\kappa^+)^{V[G]}.
\]
If an ideal $I$ on $\Pow(\kappa)\cap M[G]$ contains the complements
of all clubs in $\kappa$ belonging to $M[G]$, then every member of
this algebra outside $I$ is stationary in $V[G]$.
\end{lemma}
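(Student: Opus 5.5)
We prove $M[G]\prec H(\kappa^+)^{V[G]}$ with the Tarski--Vaught criterion, using names: since $P$ has size at most $\kappa$ and preserves the regularity of $\kappa$, we have $H(\kappa^+)^{V[G]}=H(\kappa^+)[G]$. Given $x\in H(\kappa^+)^{V[G]}$, code $\operatorname{tc}(\{x\})$ in $V[G]$ by a subset of $\kappa$. That subset has a nice name of size $\kappa$ built from antichains of $P$, and such a name lies in $H(\kappa^+)$. The Mostowski collapse of the decoded relation recovers $x$, and $\kappa$ is still a cardinal in $V[G]$, so we can choose a name $\tau\in H(\kappa^+)^V$ with $\tau^G=x$. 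Consequently, for $\tau\in H(\kappa^+)$ and any formula $\varphi$, the relation $p\Vdash\varphi^{H(\check\kappa^+)}(\tau)$ is definable in $H(\kappa^+)^V$ with parameter $P$. Here we use that $P\in H(\kappa^+)$ and that forcing over $H(\kappa^+)$ for this small forcing is definable there, because $H(\kappa^+)\models\ZFC^-$. We also use that $H(\kappa^+)^{V[G]}$ is computed as $H(\kappa^+)^V[G]$.

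We then run the standard argument that $M\prec H$ lifts to $M[G]\prec H[G]$ when $P\in M$. Suppose $H(\kappa^+)^{V[G]}\models\exists y\,\varphi(y,\tau^G)$ with $\tau\in M$. Pick $p\in G$ forcing this. The set of $q\le p$ for which some name $\sigma$ gives $q\Vdash\varphi(\sigma,\tau)$ is dense below $p$, and it is definable in $H(\kappa^+)$ from parameters in $M$. By elementarity, the dense set $D$ of conditions $q$ admitting a witness name $\sigma_q\in M$ is such that $M$ computes it correctly. Since $|P|\le\kappa\subseteq M$ and $P\in M$, we get $P\subseteq M$, so $D\in M$ and $D$ is the true dense set. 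Genericity gives some $q\in G\cap D$. Elementarity then puts a witness $\sigma_q$ inside $M$, and $\sigma_q^G\in M[G]$ witnesses the existential. The main obstacle is exactly this bookkeeping: $H(\kappa^+)^{V[G]}=H(\kappa^+)^V[G]$, together with the definability of the forcing relation for $H(\kappa^+)$-relativized formulas inside $H(\kappa^+)^V$. Both rest on $\kappa$ remaining regular and $|P|\le\kappa$.

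For the stationarity claim, let $A\in\Pow(\kappa)\cap M[G]$ with $A\notin I$, and let $C\in V[G]$ be club in $\kappa$. We show $A\cap C\neq\varnothing$. Since $C\subseteq\kappa$ is an arbitrary club of $V[G]$, it need not be in $M[G]$, so we argue by elementarity. Suppose $A$ is nonstationary in $V[G]$. Then $H(\kappa^+)^{V[G]}\models$ ``there is a club disjoint from $A$''. Since $A\in M[G]$, elementarity yields a club $C'\in M[G]$ disjoint from $A$. Being club is absolute, because $\kappa\subseteq M[G]$ and closure and unboundedness concern only subsets of $\kappa$. Then $A\subseteq\kappa\setminus C'\in I$. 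As $I$ is an ideal on this algebra, closed under subsets within $\calA$, we get $A\in I$, a contradiction. Hence $A$ is stationary in $V[G]$.
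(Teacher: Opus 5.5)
Your proposal is correct and follows the paper's proof. You identify $H(\kappa^+)^{V[G]}$ with $H(\kappa^+)^V[G]$ by coding through subsets of $\kappa$ with small names; the paper justifies this by performing the Mostowski collapse inside $H(\kappa^+)^V[G]\models\ZFC^-$, which is what your remark about $\kappa$ remaining a cardinal is standing in for. You then check Tarski--Vaught by pulling a witnessing name into $M$ via elementarity and $P\subseteq M$, and get stationarity by reflecting a disjoint club into $M[G]$, exactly as the paper does. The only difference is that you pass through a dense set below a condition forcing the existential, whereas the paper takes a witness $\sigma^G$ directly and uses the truth lemma to get $p\in G$ forcing $\varphi(\sigma,\vec\tau)$; in your version $D\in M$ is never needed, only $q\in M$.
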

\begin{proof}
Put $\theta=(\kappa^+)^V$ and $T=H(\theta)^V$. The size of $P$
preserves $\theta$, so $\theta=(\kappa^+)^{V[G]}$.
Every name in $T$ evaluates to a set of hereditary size at most
$\kappa$. Conversely, code the transitive closure of any
$x\in H(\theta)^{V[G]}$, with a distinguished point for $x$, by a
well-founded extensional relation on a subset of $\kappa$.
A name for this relation can be chosen from
$\{(\check s,p):s\in\kappa^2,\ p\in P\}$, by recording the
conditions forcing each membership. This name belongs to $T$.
The domain and distinguished point have such names as well.
The relation is well founded in $T[G]$, since it is well founded in
$V[G]$. As $T[G]\models\ZFC^-$, it has a Mostowski collapse there,
which agrees with the ambient collapse. This recovers $x$ and proves
$T[G]=H(\theta)^{V[G]}$.

For Tarski--Vaught, let $\vec\tau\in M$ be a finite tuple of names
and suppose $T[G]\models\exists x\,\varphi(x,\vec\tau^G)$.
A witnessing name and the truth lemma give $p\in G$ with
\[
 T\models\exists\sigma\,
   (\sigma\text{ is a }P\text{-name and }
                       p\Vdash_P\varphi(\sigma,\vec\tau)).
\]
The model $M$ is transitive, so $P\subseteq M$ and $p\in M$.
Elementarity supplies such a name $\sigma\in M$. Its value is the
required witness in $M[G]$.

If $A\in\Pow(\kappa)\cap M[G]$ were nonstationary, a club disjoint
from it would belong to $H(\theta)^{V[G]}$. By elementarity, such a
club $C$ would belong to $M[G]$. Then
$A\subseteq\kappa\setminus C\in I$, so $A\in I$.
\end{proof}

\begin{theorem}\label{thm:ramseycollapse}
Suppose $\kappa$ is $T^{\kappa^+}_{\omega_1}$-Ramsey and
$\nu<\kappa$ is uncountable regular. If
$G\subseteq\Coll(\nu,{<}\kappa)$ is generic over $V$, then
\[
 V[G]\models\SFCN_\omega(\kappa),\qquad \kappa=\nu^+.
\]
\end{theorem}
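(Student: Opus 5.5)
Fix in $V[G]$ a coloring $c:[\kappa]^2\to\omega$ with a nice $\Coll(\nu,{<}\kappa)$-name $\dot c\in H(\kappa^+)$. Since $\kappa$ is inaccessible (being Ramsey) and $\Coll(\nu,{<}\kappa)$ is $\kappa$-c.c.\ of size $\kappa$, we have $\kappa=\nu^+$ in $V[G]$, and $\kappa$ stays regular. The plan is to produce a local ideal $I$ on $\calA=\Pow(\kappa)\cap M[G]$ with $c\in M[G]$, verify the hypotheses of \Cref{lem:local} with $W=M[G]$, and then use \Cref{lem:ambient-stationarity} to turn $I$-positivity into genuine stationarity in $V[G]$.

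First, apply the $T^{\kappa^+}_{\omega_1}$-Ramsey property to $x=\dot c$ (together with $P=\Coll(\nu,{<}\kappa)$). This gives a weak $\kappa$-model $M\prec H(\kappa^+)$ with $M^\omega\subseteq M$, containing $\dot c$ and $P$, and a weakly amenable $M$-ultrafilter $U$. Countable closure makes the ultrapower well founded, so we get $j:M\to N$, and weak amenability makes $j$ $\kappa$-powerset preserving. $M$ is transitive, and $P\subseteq V_\kappa\subseteq M$ has hereditary size $\kappa$. In $N$, $j(P)=\Coll(\nu,{<}j(\kappa))$, which factors as $P\times Q$ with $Q=\Coll(\nu,[\kappa,j(\kappa)))$ via the identity-style isomorphism. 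Since $\operatorname{crit}(j)=\kappa$ and conditions in $P$ have support of size $<\nu<\kappa$, we get $j(p)=p\mapsto(p,1_Q)$. So \Cref{lem:seed-ideal} applies.

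The seed ideal $I$ satisfies conditions (1)--(3) of \Cref{lem:local}: closure under countable unions follows from $\kappa$-completeness once $W^\omega\subseteq W$, and (3) is \Cref{lem:seed-ideal}(2). For (4) and the countable closure of $W$, use \Cref{lem:closed-models} with $\rho=\omega_1$. The hypotheses check out: $M^\omega\subseteq M$; $P$ is $\nu$-closed with $\nu\ge\omega_1$, so it adds no new $\omega$-sequences; and $N$ sees $Q$ as ${<}\nu$-closed, hence countably closed. This yields $W^\omega\subseteq W$ and a countably closed dense $\mcD\subseteq\calA\setminus I$.

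Since $c=\dot c^G\in W$, \Cref{lem:local} gives $i$ and $X\in\calA\setminus I$ with $\Gamma_i^X(F)\in\calA\setminus I$ for all finite $F\subseteq X$. By \Cref{lem:ambient-stationarity}, applicable because $P$ has size $\kappa$ and preserves regularity of $\kappa$, together with \Cref{lem:seed-ideal}(1), every member of $\calA\setminus I$ is stationary in $V[G]$. Hence $X$ and all the $\Gamma_i^X(F)$ are stationary, which is $\SFCN_\omega(\kappa)$.

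The main obstacle is the bookkeeping of the factorization $\pi:j(P)\cong P\times Q$ in $N$. It must be an order isomorphism lying in $N$ with $\pi(j(p))=(p,1_Q)$, and $Q$ must be separative. One also needs $P$ to have hereditary size $\kappa$ in both $M$ and $N$, which follows from the powerset agreement.
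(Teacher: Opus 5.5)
Your proposal is correct and follows essentially the same route as the paper: the $\kappa$-powerset preserving ultrapower $j:M\to N$ from a countably closed $M\prec H(\kappa^+)$, the split $j(P)\cong P\times Q$ at $\kappa$ with $j(p)\mapsto(p,1_Q)$, \Cref{lem:seed-ideal,lem:closed-models} with $\rho=\omega_1$ to verify the hypotheses of \Cref{lem:local} for $W=M[G]$, and then \Cref{lem:ambient-stationarity} to turn positivity into stationarity. The paper makes two points more explicit. It notes that $V_\kappa\subseteq M$, which you also get by elementarity since $V_\kappa$ is definable from $\kappa$ in $H(\kappa^+)$, and that $j\restriction V_\kappa=\mathrm{id}$, so $N$ computes $P$ correctly. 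Also, $P$ is ${<}\nu$-closed rather than $\nu$-closed, but ${<}\nu$-closure is all you use.
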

\begin{proof}
Let $P=\Coll(\nu,{<}\kappa)$ consist of partial functions on
$[\nu,\kappa)\times\nu$ of size less than $\nu$ satisfying
$p(\alpha,\xi)<\alpha$, ordered by reverse inclusion. Put
\[
 \operatorname{supp}(p)=
 \{\alpha:\exists\xi\ ((\alpha,\xi)\in\operatorname{dom}(p))\}.
\]
Unions give ${<}\nu$-closure, and the $\Delta$-system argument at
the inaccessible $\kappa$ gives the $\kappa$-chain condition. Thus
$P$ preserves the regularity of $\kappa$ and makes $\kappa=\nu^+$.
Every support is bounded, so $P\subseteq V_\kappa$, and $P$ with its
order has hereditary size $\kappa$.

Fix $c:[\kappa]^2\to\omega$ in $V[G]$. Regard its graph as a subset
of $S=[\kappa]^2\times\omega$. From any name $\tau$ for it, form
\[
 \dot c=\{(\check s,p):s\in S,\ p\in P,
                               \ p\Vdash_P\check s\in\tau\}.
\]
The truth lemma gives $\dot c^G=c$. This name has hereditary size
at most $\kappa$ and belongs to $H(\kappa^+)^V$.

Apply \Cref{def:height-ramsey} to a tuple containing
$\kappa,\nu,P,\dot c,V_\kappa$ and enumerations witnessing their
hereditary size bounds. Obtain a countably closed
$M\prec H(\kappa^+)$ and a weakly amenable $M$-ultrafilter.
The model $M$ is transitive. Its well-founded normal ultrapower
$j:M\to N$ is $\kappa$-powerset preserving.
Since $V_\kappa\subseteq M$, induction on rank using enumerations
of length less than $\kappa$ gives $j\restriction V_\kappa=\mathrm{id}$.
Thus $V_\kappa\subseteq N$ as well, and both models compute $P$
correctly. The hereditary size bound in $N$ follows from
\Cref{lem:name-agreement}.

In $N$, let
\[
 Q=\{q\in j(P):\operatorname{supp}(q)\subseteq[\kappa,j(\kappa))\}.
\]
The factorization of \Cref{lem:seed-ideal} is
\[
 \begin{aligned}
 j(P)&\longrightarrow P\times Q,\\
 r&\longmapsto
 \bigl(r\restriction([\nu,\kappa)\times\nu),
       r\restriction([\kappa,j(\kappa))\times\nu)\bigr).
 \end{aligned}
\]
Its inverse takes $(p,q)$ to $p\cup q$, and $j(p)$ corresponds to
$(p,1_Q)$. The fixed tail $Q$ is ${<}\nu$-closed in $N$.
Apply \Cref{lem:seed-ideal,lem:closed-models}, with $\rho=\omega_1$,
to obtain an ideal $I$ on $\calA=\Pow(\kappa)\cap M[G]$ and a
countably closed dense positive family. Powerset preservation gives
the null-index sets in \Cref{lem:seed-ideal}(2), so all hypotheses
of \Cref{lem:local} hold. We obtain $i<\omega$ and
$X\in\calA\setminus I$ such that
\[
 \Gamma_i^X(F)\in\calA\setminus I\qquad(F\in[X]^{<\omega}).
\]
By \Cref{lem:ambient-stationarity}, each of these sets is stationary
in $V[G]$, proving the theorem.
\end{proof}

\begin{proof}[Proof of \Cref{thm:main}]
Apply \Cref{thm:ramseycollapse} with $\nu=\omega_1$.
If $X,i$ witness $\SFCN_\omega(\omega_2)$ and $Z$ is nonstationary,
then for every finite $F\subseteq X\setminus Z$,
\[
 \Gamma_i^{X\setminus Z}(F)=\Gamma_i^X(F)\setminus Z
\]
is stationary. The empty-set and pair cases give a stationary
remaining graph of diameter at most two.
\end{proof}

\subsection{The Cohen theorem}

\begin{corollary}\label{cor:cohen}
Assume CH and let $\kappa$ be $T^{\kappa^+}_{\omega_1}$-Ramsey.
If $G\subseteq\operatorname{Add}(\omega_1,\kappa)$ is generic over
$V$, then
\[
 V[G]\models\SFCN_\omega(\kappa).
\]
In this extension, $\kappa=2^{\omega_1}$ is weakly inaccessible and
is not weakly compact.
\end{corollary}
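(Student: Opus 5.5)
We run the proof of \Cref{thm:ramseycollapse} with $P=\operatorname{Add}(\omega_1,\kappa)$ in place of the Lévy collapse. Take $P$ to be the countable partial functions from $\kappa\times\omega_1$ to $2$, ordered by reverse inclusion. Then $P$ is countably closed, so it adds no new countable sequences of ground-model objects. Under CH, the $\Delta$-system lemma for countable supports gives the $\omega_2$-chain condition, since $\aleph_1^{\aleph_0}=\aleph_1<\kappa$. Hence $P$ preserves all cardinals, and in particular the regularity of $\kappa$. Every condition is a countable subset of $V_\kappa$, so $P\subseteq V_\kappa$ and $P$ has hereditary size $\kappa$.

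Fix a coloring $c$ in $V[G]$ and build the nice name $\dot c$ as before. Apply \Cref{def:height-ramsey} to a tuple containing $\kappa,P,\dot c,V_\kappa$. This gives a countably closed transitive $M\prec H(\kappa^+)$, a weakly amenable $M$-ultrafilter, and the $\kappa$-powerset preserving ultrapower $j:M\to N$ with $j\restriction V_\kappa=\mathrm{id}$.

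In $N$, let $Q$ be the conditions of $j(P)$ with domain contained in $[\kappa,j(\kappa))\times\omega_1$. Restriction gives the isomorphism $j(P)\cong P\times Q$, and $j(p)=p$ corresponds to $(p,1_Q)$. Since $N$ sees $Q$ as countably closed, \Cref{lem:seed-ideal}, \Cref{lem:closed-models} with $\rho=\omega_1$, and \Cref{lem:local} yield $i$ and $X\in\calA\setminus I$ with every $\Gamma_i^X(F)$ positive. \Cref{lem:ambient-stationarity} then makes these sets stationary in $V[G]$, which proves $\SFCN_\omega(\kappa)$.

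For the remaining assertions, note the following.
\begin{itemize}
\item \emph{Cardinal arithmetic.} Since $\kappa$ is inaccessible in $V$ and the chain condition holds, counting nice names gives $2^{\omega_1}=\kappa^{\omega_1}=\kappa$.
\item \emph{Weak inaccessibility.} $\kappa$ remains a regular limit cardinal, because cardinals are preserved.
\item \emph{Failure of weak compactness.} $\kappa$ is not a strong limit, since $2^{\omega_1}=\kappa$. Every weakly compact cardinal is strongly inaccessible, so $\kappa$ is not weakly compact.
\end{itemize}

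The main obstacle is the chain condition. It is the only point where CH is used. Without it, $\kappa$-c.c.\ still holds by inaccessibility, but CH is needed so that $\omega_1$ (and hence $2^{\omega}$) is not collapsed and the countable closure arguments compute correctly. I would double-check the $\Delta$-system step there.
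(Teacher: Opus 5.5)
Your proposal is correct and follows the paper's proof essentially step for step. It uses the same coordinate split $j(P)\cong P\times Q$ with a countably closed tail, feeds it through \Cref{lem:seed-ideal,lem:closed-models,lem:local,lem:ambient-stationarity}, and counts nice names for $2^{\omega_1}$; the only difference is that you get hereditary size from $P\subseteq V_\kappa$ where the paper computes $|P|=\kappa$. There are three minor slips: the $\Delta$-system bound should read $\aleph_1^{\aleph_0}=\aleph_1<\aleph_2$ rather than $<\kappa$, and the lower bound $2^{\omega_1}\geq\kappa$ from the $\kappa$ distinct Cohen subsets should be stated, since it is what actually defeats strong inaccessibility. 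Also, your closing remark misplaces CH: the countably closed $P$ never collapses $\omega_1$, and without CH it would instead collapse $2^{\aleph_0}$ onto $\omega_1$.
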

\begin{proof}
Let $P=\operatorname{Add}(\omega_1,\kappa)$ consist of countable
partial functions from $\kappa\times\omega_1$ to $2$, ordered by
reverse inclusion. It is countably closed. Under CH, any family of
$\omega_2$ conditions has a $\Delta$-system of $\omega_2$ domains.
There are at most $2^\omega=\omega_1$ restrictions to the countable
root, so two conditions agree there and are compatible. Thus $P$ is
$\omega_2$-c.c., and cardinals and cofinalities are preserved.

Every ground-model function $\omega_1\to\kappa$ has bounded range.
Inaccessibility gives
\[
 |\beta|^{\omega_1}\leq 2^{\max\{|\beta|,\omega_1\}}<\kappa
       \qquad(\beta<\kappa),
\]
so $\kappa^{\omega_1}=|P|=\kappa$, and $P$ with its order has
hereditary size $\kappa$.

Fix $c:[\kappa]^2\to\omega$ in $V[G]$, and choose a name for its
graph as in \Cref{thm:ramseycollapse}. Apply
\Cref{def:height-ramsey} to obtain $M\prec H(\kappa^+)$ containing
the same parameters and witnessing enumerations, and let $j:M\to N$
be its $\kappa$-powerset preserving normal ultrapower.
The coordinate split gives
\[
 Q=\operatorname{Add}(\omega_1,j(\kappa)\setminus\kappa)^N,
 \qquad j(P)\cong P\times Q,
\]
by restriction to $\kappa\times\omega_1$ and its complement in
$j(\kappa)\times\omega_1$. The inverse is union, and $j(p)$
corresponds to $(p,1_Q)$. The fixed tail $Q$ is countably closed
in $N$. As in \Cref{thm:ramseycollapse},
\Cref{lem:seed-ideal,lem:closed-models} give all hypotheses of
\Cref{lem:local}. It gives $X,i$ with positive internal common-neighbor
sets, which are stationary by \Cref{lem:ambient-stationarity}.

For $A=\tau^G\subseteq\omega_1$, choose for each $\alpha<\omega_1$
an antichain $B_\alpha$ maximal among the conditions forcing
$\check\alpha\in\tau$. Genericity and the truth lemma give
\[
 A=\{\alpha<\omega_1:G\cap B_\alpha\ne\varnothing\}.
\]
Every $B_\alpha$ has size at most $\omega_1$. There are at most
$(|P|^{\omega_1})^{\omega_1}=\kappa$ such sequences in $V$.
Distinct Cohen coordinates give $\kappa$ distinct subsets of
$\omega_1$, so $V[G]\models 2^{\omega_1}=\kappa$.
Cardinal and cofinality preservation keep $\kappa$ weakly inaccessible.
The displayed equality prevents strong inaccessibility and hence
weak compactness.
\end{proof}

\section{Partition Hypotheses and Tukey Transfer}\label{sec:partition-hypotheses}

\subsection{Definitions and the common-neighbor implication}

Let $P=(P,\leq_P)$ be a nonempty \emph{directed quasi-order}: the relation
is reflexive and transitive, and every finite subset has a common upper
bound. We omit the subscript when clear.

For an integer $r\geq1$, let
\[
 P^{\leq r}=\bigcup_{1\leq j\leq r}P^j.
\]
Members are ordered tuples, with repeated coordinates allowed. Write
$|s|$ for the length of $s$, and put $P^0=\{\varnothing\}$, where
$\varnothing$ is the empty tuple. For a tuple $a$ and $x\in P$,
$a\concat(x)$ denotes the tuple obtained by appending $x$.
For $s\in P^k$ and $t\in P^\ell$, write $s\unlhd t$ if there is a
strictly increasing map $u:k\to\ell$ with $s(j)=t(u(j))$ for every
$j<k$. Thus $s$ is obtained by deleting coordinates of $t$.

A \emph{full $r$-flag} is a tuple $(s_1,\ldots,s_r)$ of nonempty
tuples such that $|s_j|=j$ and $s_1\unlhd\cdots\unlhd s_r$.
A map $F:P^{\leq r}\to P$ is \emph{$r$-cofinal} if
\[
 p\leq_P F((p))\quad(p\in P),\qquad
 s\unlhd t\ \Longrightarrow\ F(s)\leq_P F(t).
\]
We abbreviate $F((p))$ by $F(p)$ and $F((p,q))$ by $F(p,q)$.
The induced map on full flags is
\[
 F^*(s_1,\ldots,s_r)=(F(s_1),\ldots,F(s_r))\in P^r.
\]
For $n<\omega$ and a cardinal $\lambda$, the assertion
$\PH_n(P,\lambda)$ says that every coloring $c:P^{n+1}\to\lambda$
admits an $(n+1)$-cofinal $F$ for which $c\circ F^*$ is constant on
all full $(n+1)$-flags. These are the conventions of
\cite[Definitions~3.1--3.2]{BBMT}.

We write $\PH_n(P)$ for $\PH_n(P,\omega)$. Ordinals carry their
usual order.

At $\kappa=\omega_2$ and $\lambda=\omega$, the hypothesis below is
Komj\'ath--Shelah's principle $(*)$ \cite[p.~72]{KS}. The implication
uses the common-neighbor construction in
\cite[proof of Theorem~8.13]{BBMT}.

\begin{proposition}\label{prop:fcnph}
Let $\lambda$ be an infinite cardinal. Suppose that every coloring
$c:[\kappa]^2\to\lambda$ has an unbounded $X\subseteq\kappa$ and a
color $i<\lambda$ such that
\begin{equation}\label{eq:upper-neighbors}
 \alpha<\beta\text{ in }X
 \quad\Longrightarrow\quad
 \exists\gamma\in(\beta,\kappa)\quad
       c(\{\alpha,\gamma\})=c(\{\beta,\gamma\})=i.
\end{equation}
Then $\PH_1(\kappa,\lambda)$ holds. In particular,
$\FCN_\lambda(\kappa)$ implies $\PH_1(\kappa,\lambda)$.
\end{proposition}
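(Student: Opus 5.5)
We define $F$ directly from a witness to \eqref{eq:upper-neighbors}, applied to a symmetrized version of $c$. Fix $c:\kappa^2\to\lambda$; by the conventions above, $\PH_1(\kappa,\lambda)$ concerns colorings of ordered pairs, repetitions allowed. Define $d:[\kappa]^2\to\lambda$ by $d(\{\alpha,\gamma\})=c(\alpha,\gamma)$ for $\alpha<\gamma$, so the smaller ordinal comes first. Apply the hypothesis to $d$ to get an unbounded $X\subseteq\kappa$ and a color $i$ satisfying \eqref{eq:upper-neighbors}.

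We first unwind the definition at $n=1$. A map $F:\kappa^{\leq2}\to\kappa$ is $2$-cofinal exactly when $p\leq F(p)$ and $F(p),F(q)\leq F(p,q)$ for all $p,q$. The relation $s\unlhd t$ between tuples of lengths $\leq2$ only involves equal tuples or a singleton $(p)$ inside a pair having $p$ as a coordinate. The full $2$-flags are $((p),(p,q))$ and $((q),(p,q))$, repetitions allowed, with images $(F(p),F(p,q))$ and $(F(q),F(p,q))$. So it suffices to build a $2$-cofinal $F$ with
\[
 c(F(p),F(p,q))=c(F(q),F(p,q))=i,\qquad F(p),F(q)<F(p,q).
\]

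Step one: since $X$ is unbounded, let $F(p)=\min(X\setminus p)$. Then $p\leq F(p)\in X$.

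Step two: fix $p,q$ and put $\alpha=\min\{F(p),F(q)\}$ and $\beta=\max\{F(p),F(q)\}$.
\begin{itemize}
\item If $\alpha<\beta$, then \eqref{eq:upper-neighbors} gives $\gamma>\beta$ with $d(\{\alpha,\gamma\})=d(\{\beta,\gamma\})=i$.
\item If $\alpha=\beta$ (for instance when $p=q$), choose any $\beta'\in X$ with $\beta'>\alpha$, using unboundedness. Then \eqref{eq:upper-neighbors} gives $\gamma>\beta'$ with $d(\{\alpha,\gamma\})=i$.
\end{itemize}
Let $F(p,q)=\gamma$, making a choice that depends only on the unordered set $\{F(p),F(q)\}$. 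Then $F(p,q)=F(q,p)$, although the definition does not need this.

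Since $\gamma$ exceeds both $F(p)$ and $F(q)$, each flag image $(F(p),\gamma)$ has its smaller coordinate first. Hence $c(F(p),\gamma)=d(\{F(p),\gamma\})=i$, and likewise for $q$. Monotonicity also holds because $F(p),F(q)<\gamma$. Therefore $c\circ F^*$ is constantly $i$ on full $2$-flags. The only care needed is this order bookkeeping: $d$ must read $c$ with the smaller ordinal first, and $\gamma$ must lie strictly above both coordinates. Strictness is what makes the symmetrization valid and keeps the diagonal values $c(\alpha,\alpha)$ from ever being used.

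For the final claim, let $X,i$ witness $\FCN_\lambda(\kappa)$ for $d$. Taking $F=\varnothing$ in \eqref{eq:fcn} gives $|X|=\kappa$, so $X$ is unbounded by regularity of $\kappa$. For $\alpha<\beta$ in $X$, the set $\Gamma_i^X(\{\alpha,\beta\})$ has size $\kappa$, so it contains some $\gamma>\beta$, and $\gamma$ satisfies \eqref{eq:upper-neighbors}. The first part of the proposition then gives $\PH_1(\kappa,\lambda)$.
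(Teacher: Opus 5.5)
Your proof is correct and follows the paper's argument: you symmetrize the coloring via smaller-first order, send singletons into $X$, and take $F(p,q)$ to be an upper common color-$i$ neighbor, using a later point of $X$ in the degenerate case. The only difference is that you set $F(p)=\min(X\setminus p)$ rather than using the increasing enumeration of $X$. As a result, your degenerate case $F(p)=F(q)$ also covers some pairs with $p\neq q$, and you handle these correctly.
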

\begin{proof}
Given $d:\kappa^2\to\lambda$, let
$c(\{\xi,\eta\})=d(\min\{\xi,\eta\},\max\{\xi,\eta\})$.
Choose $X,i$ as in \eqref{eq:upper-neighbors} and enumerate $X$
increasingly as $\langle b_\alpha:\alpha<\kappa\rangle$.
Set $F(\alpha)=b_\alpha$. For $\alpha\ne\beta$, choose
$F(\alpha,\beta)$ above both $b_\alpha,b_\beta$ and joined to them
in color $i$. For $F(\alpha,\alpha)$, use a common upper neighbor
of $b_\alpha$ and a later point of $X$.

Since $\alpha\leq b_\alpha$ and
$b_\alpha,b_\beta<F(\alpha,\beta)$, the map $F$ is $2$-cofinal.
Each full flag ending in $(\alpha,\beta)$ begins with $(\alpha)$ or
$(\beta)$, and
\[
 d(F(\alpha),F(\alpha,\beta))
   =d(F(\beta),F(\alpha,\beta))=i.
\]
For an FCN witness, the pair
common-neighbor sets have size $\kappa$ and hence are unbounded, giving
\eqref{eq:upper-neighbors}.
\end{proof}

\subsection{A weakly compact upper bound}\label{sec:ph-weaklycompact}

We first obtain the upper common neighbors from a frequent pair.

\begin{lemma}\label{lem:asymmetric}
Let $\lambda<\kappa$ be an infinite cardinal and let $W_0\subseteq W_1$ be
transitive models of $\ZFC^-$ with $\kappa,c\in W_0$, where
$c:[\kappa]^2\to\lambda$. Put $\calA_e=\Pow(\kappa)\cap W_e$.
Suppose $I_e$ is a proper ideal on $\calA_e$ containing its bounded
members, for $e<2$, and the following hold:
\begin{enumerate}
\item $I_0\in W_1$, and $I_0$ is closed under unions of null sets
indexed by sequences of length at most $\lambda$ belonging to $W_0$.
\item Each $(\calA_e\setminus I_e,\subseteq)$ has an externally
${<}\lambda^+$-closed dense family $\mcD_e$.
\item $I_1$ is normal for $\kappa$-sequences belonging to $W_1$.
\end{enumerate}
Then there are $i<\lambda$, $B\in\calA_0\setminus I_0$ and
$X\in\calA_1\setminus I_1$ such that
\[
 \Gamma_i^B(\{\alpha,\beta\})\in\calA_0\setminus I_0
             \qquad(\alpha<\beta\text{ in }X).
\]
\end{lemma}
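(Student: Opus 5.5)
The plan is to obtain the lemma from a \emph{mixed} frequency notion, with vertices measured by $I_1$ and neighborhoods measured by $I_0$, and then to run a single diagonal intersection in $I_1$. For $B\in\calA_0\setminus I_0$, $Y\in\calA_1\setminus I_1$ and $i<\lambda$, say that $(B,Y)$ is \emph{$i$-good} if for every $D\in\calA_0\setminus I_0$ with $D\subseteq B$,
\[
 S_i(Y,D)=\{x\in Y:N_i(x)\cap D\in I_0\}\in I_1 .
\]
This set belongs to $\calA_1$. Indeed, $c,D\in W_0\subseteq W_1$ gives $N_i(x)\cap D\in\calA_0$, and $I_0\in W_1$ by (1), so Separation applies in $W_1$.

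\textbf{Step 1: some color has a good pair.} Suppose no $(B,Y)$ is $i$-good for any $i$. I would build, by recursion on $n<\lambda$, decreasing sequences $D_n\in\mcD_0$ and $Y_n\in\mcD_1$ such that
\[
 N_n(x)\cap D_n\in I_0\qquad(x\in Y_n).
\]
At stage $n$, first take lower bounds $D',Y'$ of the earlier terms, using the ${<}\lambda^+$-closure from (2). Failure of $n$-goodness for $(D',Y')$ gives a positive $D\subseteq D'$ whose exceptional set $S_n(Y',D)$ is $I_1$-positive. Refine $D$ inside $\mcD_0$ and $S_n(Y',D)$ inside $\mcD_1$. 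Since shrinking $D$ keeps $N_n(x)\cap D$ null, the required property persists.

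After $\lambda$ steps, closure gives lower bounds $D_*,Y_*$. Pick any $x\in Y_*$. The sequence $\langle N_n(x)\cap D_*:n<\lambda\rangle$ lies in $W_0$, each term is null, and $\{x\}$ is bounded. By the $\lambda$-union closure in (1),
\[
 D_*\subseteq\{x\}\cup\bigcup_{n<\lambda}\bigl(N_n(x)\cap D_*\bigr)\in I_0 ,
\]
which contradicts $D_*\notin I_0$. So there are $i$ and an $i$-good pair $(B,Y)$.

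\textbf{Step 2: the diagonal intersection.} In $W_1$, define a $\kappa$-sequence by
\[
 Z_\alpha=S_i(Y,B)\cup S_i\bigl(Y,B\cap N_i(\alpha)\bigr)
\]
when $B\cap N_i(\alpha)\notin I_0$, and $Z_\alpha=S_i(Y,B)$ otherwise. The case split is definable in $W_1$ because $I_0\in W_1$. Goodness of $(B,Y)$ makes every $Z_\alpha$ lie in $I_1$, so by (3), $Z=\nabla_{\alpha<\kappa}Z_\alpha\in I_1$. Put
\[
 X=Y\setminus\bigl(Z\cup S_i(Y,B)\bigr)\in\calA_1\setminus I_1 .
\]
Now take $\alpha<\beta$ in $X$. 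Since $\alpha\notin S_i(Y,B)$, the set $B\cap N_i(\alpha)$ is positive. Since $\beta\notin Z$, $\beta$ avoids $Z_\alpha$, so $\beta\notin S_i(Y,B\cap N_i(\alpha))$. Therefore
\[
 \Gamma_i^B(\{\alpha,\beta\})=B\cap N_i(\alpha)\cap N_i(\beta)\in\calA_0\setminus I_0 .
\]

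\textbf{Main obstacle.} The delicate point is definability: each exceptional set and each case distinction must live in $W_1$ so that normality of $I_1$ applies. This is exactly where the hypothesis $I_0\in W_1$ is used. A second point to check is that in Step 1 the union of null sets is indexed by a sequence that really lies in $W_0$, which holds because $x$, $D_*$ and $c$ are all there. Step 1 also needs closure of length $\lambda$ rather than $\omega$, because the colors range over $\lambda$; this is why (2) asks for ${<}\lambda^+$-closure.
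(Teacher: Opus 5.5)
Your proof is correct and follows essentially the same route as the paper. Both use the mixed frequency notion with a $\lambda$-stage descending construction to find a frequent pair, then take one diagonal union in $W_1$ of the sets of $\beta$ with $N_i(\alpha)\cap N_i(\beta)\cap B\in I_0$; the only difference is cosmetic, since the paper first passes to $A'=\{\alpha\in A:N_i(\alpha)\cap B\notin I_0\}$ while you fold $S_i(Y,B)$ into each $Z_\alpha$ and remove it from $X$.
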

\begin{proof}
For $A\in\calA_1\setminus I_1$ and $B\in\calA_0\setminus I_0$,
write $\Freq_i(A,B)$ if
\[
 \{\alpha\in A:N_i(\alpha)\cap D\in I_0\}\in I_1
\]
for every $D\in\calA_0\setminus I_0$ with $D\subseteq B$.
Each exceptional set belongs to $W_1$ by Separation using
$I_0,c,A,D\in W_1$. Each $N_i(\alpha)\cap D$ belongs to $W_0$,
since $\alpha<\kappa$ belongs to the transitive model $W_0$.

Suppose no positive pair is frequent in any color. The first
decreasing construction in \Cref{lem:sequence} applies to these two
algebras, with $\lambda$ stages. At a limit stage take lower bounds
in $\mcD_1,\mcD_0$ before omitting the next color. We obtain
decreasing $A_\xi\in\mcD_1$ and $B_\xi\in\mcD_0$ such that
\[
 \alpha\in A_\xi\quad\Longrightarrow\quad
 N_\xi(\alpha)\cap B_\xi\in I_0\qquad(\xi<\lambda).
\]
Choose positive lower bounds $A_*,B_*$ and fix $\alpha\in A_*$.
Every $N_\xi(\alpha)\cap B_*$ is null. Their $\lambda$-sequence
belongs to $W_0$ by Replacement from $c,\alpha,B_*$. Consequently
\[
 B_*=(B_*\cap\{\alpha\})\cup
       \bigcup_{\xi<\lambda}(N_\xi(\alpha)\cap B_*)\in I_0,
\]
a contradiction. Thus $\Freq_i(A,B)$ holds for some positive
$A,B$ and some $i<\lambda$.

Put $R_\alpha=N_i(\alpha)\cap B$ and
$A'=\{\alpha\in A:R_\alpha\notin I_0\}$. Frequency with $D=B$
gives $A\setminus A'\in I_1$, so $A'\in\calA_1\setminus I_1$.
Define
\[
 E_\alpha=
 \begin{cases}
 \{\beta\in A':R_\alpha\cap R_\beta\in I_0\},&\alpha\in A',\\
 \varnothing,&\alpha\notin A'.
 \end{cases}
\]
Since $I_0\in W_1$, Separation and Replacement give
$\langle E_\alpha:\alpha<\kappa\rangle\in W_1$.
For $\alpha\in A'$, frequency applied to the positive set
$R_\alpha\subseteq B$ gives $E_\alpha\in I_1$. Normality over $W_1$
therefore gives
\[
 Z=\nabla_{\alpha<\kappa}E_\alpha\in I_1,\qquad
 X=A'\setminus Z\in\calA_1\setminus I_1.
\]
If $\alpha<\beta$ belong to $X$, then $\beta\notin E_\alpha$, so
$\Gamma_i^B(\{\alpha,\beta\})=R_\alpha\cap R_\beta\notin I_0$.
\end{proof}

If $\kappa$ is weakly compact, every $\kappa$-model $M$ admits an
elementary embedding $k:M\to N'$ into a transitive model, with
critical point $\kappa$ \cite[Theorem~1.1(7)]{Gitman}. The derived ultrafilter
\[
 U=\{A\in\Pow(\kappa)\cap M:\kappa\in k(A)\}
\]
is normal over $M$. The map $[f]_U\mapsto k(f)(\kappa)$ embeds its
ultrapower into $N'$, so the ultrapower is well founded. Its transitive
collapse gives $j:M\to N$ of the form \eqref{eq:normal-form}, with
$|N|=\kappa$ \cite[Proposition~2.3]{Gitman}.

\begin{theorem}\label{thm:ph-weaklycompact}
Suppose $\kappa$ is weakly compact, $\nu<\kappa$ is uncountable
regular, and $G\subseteq\Coll(\nu,{<}\kappa)$ is generic over $V$.
Then $\kappa=(\nu^+)^{V[G]}$ and
\[
 V[G]\models\PH_1(\kappa,\lambda)
       \qquad\text{for every infinite cardinal }\lambda<\nu.
\]
\end{theorem}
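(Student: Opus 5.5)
The plan follows the intro's "two models" idea: apply \Cref{lem:asymmetric} to two nested local models in $V[G]$, and turn its output into an FCN-type witness for pairs, so that \Cref{prop:fcnph} yields $\PH_1(\kappa,\lambda)$. The collapse facts are as in \Cref{thm:ramseycollapse}: $P=\Coll(\nu,{<}\kappa)$ is ${<}\nu$-closed, $\kappa$-c.c., lies in $V_\kappa$, and forces $\kappa=\nu^+$. Fix $\lambda<\nu$ and $c:[\kappa]^2\to\lambda$ in $V[G]$, and replace its name by a nice name $\dot c$ of hereditary size $\kappa$.

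\textbf{Step 1: the first model.} Using weak compactness, take a $\kappa$-model $M_0$ containing $P,\dot c,V_\kappa$. By the remark before the theorem, form its normal ultrapower $j_0:M_0\to N_0$ with $|N_0|=\kappa$ and $N_0^{<\kappa}\subseteq N_0$, which holds since $M_0^{<\kappa}\subseteq M_0$. Factor $j_0(P)\cong P\times Q_0$ as in \Cref{thm:ramseycollapse}; here $Q_0$ is ${<}\nu$-closed in $N_0$. Apply \Cref{lem:seed-ideal,lem:closed-models} with $\rho=\nu$. Since $P$ adds no ${<}\nu$-sequences, this gives $W_0=M_0[G]$ with $W_0^{<\nu}\subseteq W_0$, a $\nu$-complete ideal $I_0$ on $\calA_0$ containing bounded sets, and an externally ${<}\nu$-closed dense family, hence ${<}\lambda^+$-closed since $\lambda^+\leq\nu$. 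Condition (1) of \Cref{lem:asymmetric} on $I_0$ also follows: $\lambda<\kappa$-indexed internal unions are covered by \Cref{lem:seed-ideal}(1). Powerset preservation is not needed for $I_0$.

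\textbf{Step 2: the second model.} $I_0$ is determined by the map $A\mapsto b_A$, which is computed from the set $(M_0,j_0,N_0)$ in $V[G]$. More usefully, $I_0=\{\tau^G:\tau\in M_0,\ 1_{Q_0}\Vdash\check\kappa\notin j_0(\tau)_G\}$. By atomic forcing absoluteness this equals $\dot I_0^G$ for the $P$-name $\dot I_0$ of pairs $(\tau,p)$ with $p\Vdash$ ``$\check\kappa\notin j_0(\tau)_{\dot G}$ is forced by $1_{Q_0}$''. This name is definable from $M_0,j_0,N_0$, all of size $\kappa$. Choose a second $\kappa$-model $M_1\ni M_0,j_0,N_0,\dot I_0$, so that $M_0\subseteq M_1$ by transitivity. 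Let $j_1:M_1\to N_1$ be its normal ultrapower and $W_1=M_1[G]\supseteq W_0$. Then $I_0\in W_1$. The same lemmas give $I_1$ on $\calA_1$ with a ${<}\nu$-closed dense family, and \Cref{lem:seed-ideal}(1) gives normality of $I_1$ for $\kappa$-sequences in $W_1$. Thus all hypotheses of \Cref{lem:asymmetric} hold, producing $i<\lambda$, $B\in\calA_0\setminus I_0$ and $X\in\calA_1\setminus I_1$ with $\Gamma_i^B(\{\alpha,\beta\})\notin I_0$ for $\alpha<\beta$ in $X$.

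\textbf{Step 3: conclude.} $I_0$ contains bounded sets, so each such $\Gamma_i^B(\{\alpha,\beta\})$ is unbounded. Hence there is $\gamma>\beta$ with $c(\{\alpha,\gamma\})=c(\{\beta,\gamma\})=i$. Also $X\notin I_1$ is unbounded. So \eqref{eq:upper-neighbors} holds, and \Cref{prop:fcnph} gives $\PH_1(\kappa,\lambda)$.

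The main obstacle I expect is Step 2: verifying that $I_0\in W_1$ genuinely holds. The ideal $I_0$ is defined in $V[G]$ via Boolean values in $\RO(Q_0)^{V[G]}$, and this must be recast through the forcing relation of $Q_0$ over $N_0[G]$ and then as a $P$-name inside $M_1$. This uses that $N_0\in M_1$ and that $P$-genericity over $V$ gives genericity over $N_0$. A secondary point is checking that the ${<}\nu$-closure hypotheses of \Cref{lem:closed-models} hold for $\kappa$-models, using $M^{<\kappa}\subseteq M$ and $\nu<\kappa$.
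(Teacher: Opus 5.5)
Your proposal is correct and follows essentially the same route as the paper. You use two nested $\kappa$-models with their normal ultrapowers, the seed ideals from \Cref{lem:seed-ideal,lem:closed-models} with $\rho=\nu$, \Cref{lem:asymmetric} to obtain the frequent pair, and unboundedness of the null-free sets to feed \Cref{prop:fcnph}, without any $\kappa$-powerset preservation. The one minor difference is how you certify $I_0\in W_1$: you place a ground-model $P$-name $\dot I_0\in H(\kappa^+)$ into $M_1$ and use the truth lemma, whereas the paper places $M_0,N_0,j_0,Q_0,\pi_0$ into $M_1$ and defines $I_0$ inside $W_1$ from these and $G$ via absoluteness of name evaluation and atomic forcing. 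Both versions work.
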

\begin{proof}
Put $P=\Coll(\nu,{<}\kappa)$, with the presentation used in
\Cref{thm:ramseycollapse}. Fix an infinite cardinal $\lambda<\nu$ and a coloring
$c:[\kappa]^2\to\lambda$ in $V[G]$.
The same name construction, now with
$S=[\kappa]^2\times\lambda$, gives a name
$\dot c\in H(\kappa^+)^V$ for its graph.

Since $\kappa^{<\kappa}=\kappa$, choose a $\kappa$-model
$M_0\prec H(\kappa^+)$ containing $\kappa,\nu,P,\dot c,V_\kappa$
and their witnessing enumerations. Let $j_0:M_0\to N_0$ be a
normal ultrapower as above. As in \Cref{thm:ramseycollapse}, split
conditions at $\kappa$ to obtain
\[
 \pi_0:j_0(P)\longrightarrow P\times Q_0.
\]
The sets $M_0,N_0,j_0,Q_0,\pi_0$ have hereditary size at most
$\kappa$: the first two are transitive sets of size $\kappa$, the
third is a function between them, and $Q_0,\pi_0\in N_0$.
Choose a second $\kappa$-model $M_1\prec H(\kappa^+)$ containing
these sets and the original parameters. Let $j_1:M_1\to N_1$ be a
normal ultrapower, and define $\pi_1$ by the same coordinate split.
Transitivity gives $M_0,N_0\subseteq M_1$.

For $e<2$, we have
\[
 \pi_e:j_e(P)\cong P\times Q_e,\qquad
 Q_e=\Coll(\nu,[\kappa,j_e(\kappa)))^{N_e}.
\]
The models $M_e,N_e$ contain $V_\kappa$, so they compute $P$ correctly.
They also compute its hereditary size as $\kappa$, since each regards
$\kappa$ as inaccessible and $P=\Coll(\nu,{<}\kappa)$.
The fixed tail $Q_e$ is ${<}\nu$-closed in $N_e$.
Apply \Cref{lem:seed-ideal,lem:closed-models} with $\rho=\nu$.
In $V[G]$, put $W_e=M_e[G]$ and $\calA_e=\Pow(\kappa)\cap W_e$,
and let $I_e$ be the resulting ideal. It is normal over $W_e$, and
its positive order has an externally ${<}\nu$-closed dense family.

We have $c\in W_0\subseteq W_1$. Also $W_0\in W_1$, since the
$P$-names in the set $M_0\in M_1$ can be evaluated using $G\in W_1$.
Separation gives $\calA_0\in W_1$. We show that
\begin{equation}\label{eq:ideal-captured}
 I_0\in W_1.
\end{equation}
Working in $W_1$, let $J$ be the set of $A\in\calA_0$ for which
some $P$-name $\tau\in M_0$ satisfies $\tau^G=A$ and
\[
 1_{Q_0}\Vdash_{Q_0}\check\kappa\notin j_0(\tau)_G.
\]
Here names are transported along $\pi_0$ before partial evaluation.
All parameters $\kappa,M_0,j_0,Q_0,\pi_0,G$ belong to $W_1$.
Name evaluation is absolute by recursion on rank, so each
$j_0(\tau)_G$ computed in $W_1$ is the actual $Q_0$-name.
Atomic forcing absoluteness identifies the displayed assertion with
$b_A=0$. Independence of the chosen name follows from
\Cref{lem:seed-ideal}. Thus $J=I_0$, proving
\eqref{eq:ideal-captured}.

Since $\lambda^+\leq\nu$, the dense families are
${<}\lambda^+$-closed. The internal completeness of $I_0$ follows
from \Cref{lem:seed-ideal}(1), since $\lambda<\kappa$.
Thus all hypotheses of \Cref{lem:asymmetric} hold.
It gives $i<\lambda$ and a positive $X$ whose pairs have positive
common-neighbor sets in $\calA_0$. Bounded members of both algebras
are null, so $X$ and these common-neighbor sets are unbounded.
They satisfy \eqref{eq:upper-neighbors}, and \Cref{prop:fcnph}
gives $\PH_1(\kappa,\lambda)$. The collapse makes $\kappa=\nu^+$.
\end{proof}

Only parts (1) and (3) of \Cref{lem:seed-ideal} are used here. The second
model makes $I_0$ available as a set in $W_1$, so the argument does not
require either $j_0$ or $j_1$ to be $\kappa$-powerset preserving.

\Cref{prop:fcnph} also applies to the Cohen model of \Cref{cor:cohen}.
Todorcevic--Zhang obtain $\PH_n(\kappa)$ simultaneously for all
$n<\omega$ from a uniform normal ideal whose quotient is a nontrivial
measure algebra, and after a length-$\kappa$ finite-support iteration
of $\sigma$-centered forcing over a model with a measurable $\kappa$
\cite[Corollaries~4.12--4.13, preprint version]{TZ}.
The latter forcing is c.c.c. and leaves $\kappa$ a regular limit
cardinal.

Write $\square(\kappa)$ for the existence of a coherent sequence
$\langle C_\alpha:\alpha<\kappa,\ \alpha\text{ limit}\rangle$ of
clubs $C_\alpha\subseteq\alpha$ with no club thread. Coherence means
$C_\beta\cap\alpha=C_\alpha$ whenever $\alpha$ is a limit point of
$C_\beta$. A thread would be a club $D\subseteq\kappa$ with
$D\cap\alpha=C_\alpha$ at every limit point $\alpha$ of $D$.
By \Cref{prop:fcnph} and \cite[Proposition~8.19]{BBMT},
\[
 \SFCN_\omega(\kappa)\ \Longrightarrow\ \PH_1(\kappa)
 \ \Longrightarrow\ \neg\square(\kappa).
\]

\begin{corollary}\label{cor:ph-consistency}
Over $\ZFC$, $\PH_1(\omega_2)$ is equiconsistent with the existence
of a weakly compact cardinal.
\end{corollary}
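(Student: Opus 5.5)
Both directions are already available, and the plan is simply to combine them. For the upper bound, assume $\mathrm{Con}(\ZFC+\exists$ a weakly compact cardinal$)$ and take a model $V$ with a weakly compact $\Lambda$. Then force with $\Coll(\omega_1,{<}\Lambda)$ and apply \Cref{thm:ph-weaklycompact} with $\nu=\omega_1$ and $\lambda=\omega<\nu$. This yields $V[G]\models\kappa=\omega_2$ together with $\PH_1(\omega_2,\omega)$, and $\PH_1(\omega_2,\omega)$ is exactly $\PH_1(\omega_2)$ by our convention $\PH_n(P)=\PH_n(P,\omega)$. Thus $\mathrm{Con}(\ZFC+\text{weakly compact})$ implies $\mathrm{Con}(\ZFC+\PH_1(\omega_2))$. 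This direction also gives the first assertion of \Cref{thm:ph-intro} for general $\nu$.

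For the lower bound, use the implication $\PH_1(\kappa)\Rightarrow\neg\square(\kappa)$ from \cite[Proposition~8.19]{BBMT}, recorded just before the statement, with $\kappa=\omega_2$. So $\PH_1(\omega_2)$ implies the failure of $\square(\omega_2)$. The classical Todorcevic/Jensen argument then applies: if $\square(\omega_2)$ fails, then $\omega_2$ is weakly compact in $L$. Indeed, in $L$ we have $\square(\kappa)$ for every regular $\kappa$ that is not weakly compact. If $\omega_2^V$ were not weakly compact in $L$, a $\square(\omega_2)$-sequence in $L$ would remain one in $V$, because coherence is absolute and a thread in $V$ would be a club $D$ whose initial segments are the $C_\alpha\in L$. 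I would argue that $D$ is then definable from the sequence in $L$: it is the union of the $C_\alpha$ along its limit points, and any thread reconstructed this way lies in $L$. That contradiction shows $L\models\omega_2^V$ is weakly compact. (If $\omega_2^V$ were not regular in $L$, then $\omega_2^V$ would be a successor or singular there, which is handled the same way, since $\square$ of the $L$-successor style holds.)

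The main obstacle is precisely this absoluteness of threads. The cleanest route is to cite the standard fact that $\neg\square(\kappa)$ at a regular $\kappa\ge\omega_2$ implies $\kappa$ is weakly compact in $L$ (Todorcevic). I would invoke it rather than reprove it, noting that the only subtlety is upward absoluteness of the nonexistence of threads. That subtlety is handled because a thread $D$ satisfies $D=\bigcup\{C_\alpha:\alpha\in\lim D\}$ and $D\cap\alpha\in L$ for cofinally many $\alpha$. A standard argument using the fact that $\square(\kappa)$ sequences in $L$ come from fine structure yields a thread in $L$, which is impossible. Combining the two directions gives the equiconsistency.
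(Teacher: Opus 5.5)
Your proposal is correct and follows the paper's proof: the upper bound is \Cref{thm:ph-weaklycompact} with $\nu=\omega_1$ and $\lambda=\omega$, and the lower bound combines $\PH_1(\omega_2)\Rightarrow\neg\square(\omega_2)$ from \cite[Proposition~8.19]{BBMT} with the cited fact that $\neg\square(\omega_2)$ makes $\omega_2$ weakly compact in $L$. Rest the lower bound on that citation and drop your thread sketch: a club $D$ with every $D\cap\alpha=C_\alpha\in L$ need not lie in $L$ (the usual threading poset adds exactly such clubs), so preserving non-threadability really uses the fine-structural choice of Jensen's sequence and the regularity of $\omega_2>\omega_1$ in $V$; also, $\omega_2^V$ is always regular in $L$, so your parenthetical singular case never arises.
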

\begin{proof}
The upper bound is \Cref{thm:ph-weaklycompact} with
$\nu=\omega_1$. For the lower bound, $\PH_1(\omega_2)$ implies
$\neg\square(\omega_2)$, which makes $\omega_2$ weakly compact in
$L$, as recalled before \cite[Proposition~8.19]{BBMT}.
\end{proof}

\subsection{Tukey transfer}

Let $P=(P,\leq_P)$ and $Q=(Q,\leq_Q)$ be nonempty directed
quasi-orders. A subset of $P$ is \emph{bounded} if it has a common
upper bound in $P$, and \emph{cofinal} if it contains an element above
every point of $P$. We write $P\leq_T Q$ if there is a map
$g:P\to Q$ such that
\[
 A_q=\{p\in P:g(p)\leq_Q q\}
\]
is bounded in $P$ for every $q\in Q$. Equivalently, $g$ sends
unbounded sets to unbounded sets. We call $g$ a \emph{Tukey map},
following \cite[Section~8]{BBMT}.

For finitely many colors, directedness alone suffices.

\begin{lemma}\label{thm:finite}
For every directed quasi-order $P$, every $n<\omega$, and every positive
integer $k$, $\PH_n(P,k)$ holds in ZFC.
\end{lemma}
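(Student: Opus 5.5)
The plan is to use an ultrafilter on $P$ that contains every cone. Ramsey-type colour reductions along this ultrafilter then produce the cofinal map. For $p\in P$ let $C_p=\{q\in P:p\leq_P q\}$. Since $P$ is nonempty and directed, any finitely many cones $C_{p_1},\ldots,C_{p_m}$ contain the cone above a common upper bound of $p_1,\ldots,p_m$. That cone is nonempty, as it contains its vertex. So the cones generate a proper filter, which I extend to an ultrafilter $U$ on $P$. The one place the finiteness of $k$ is used is this: an ultrafilter chooses exactly one piece of any partition of $P$ into $k$ pieces.

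Fix $c:P^{n+1}\to k$. Define derived colourings $c_j:P^j\to k$ by downward recursion for $j=n+1,n,\ldots,0$. Start with $c_{n+1}=c$. For $y\in P^j$, let $c_j(y)$ be the unique $i<k$ with
\[
 \{x\in P:c_{j+1}(y\concat(x))=i\}\in U .
\]
This exists because the $k$ fibres partition $P$. Put $i_0=c_0(\varnothing)$. By construction, whenever $c_j(y)=i_0$ with $j\leq n$, the set $\{x:c_{j+1}(y\concat(x))=i_0\}$ belongs to $U$.

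Next I define $F$ on $P^{\leq n+1}$ by recursion on $|s|$, maintaining the invariant that $c_j(F(s_1),\ldots,F(s_j))=i_0$ for every full $j$-flag $(s_1,\ldots,s_j)$ with $j\leq n+1$. Suppose $|s|=j$ and $F$ is defined on all shorter tuples. Only finitely many full $j$-flags end at $s$, one for each chain $s_1\unlhd\cdots\unlhd s_{j-1}\unlhd s$ of subtuples. For each such flag, the invariant for $(s_1,\ldots,s_{j-1})$ puts the set
\[
 \{x:c_j(F(s_1),\ldots,F(s_{j-1}),x)=i_0\}
\]
in $U$. For $j=1$ this uses $c_0(\varnothing)=i_0$.

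I intersect these finitely many sets with one further member of $U$. This is the cone above a common upper bound of all $F(s')$ for the finitely many proper subtuples $s'\unlhd s$ of smaller length, together with $p$ when $s=(p)$. The intersection lies in $U$, hence is nonempty. I choose $F(s)$ in it.

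This choice gives both required properties. First, $p\leq F(p)$. Second, $F(s')\leq F(t)$ whenever $s'\unlhd t$. For $|s'|<|t|$ this holds by the cone condition. For $|s'|=|t|$ we have $s'=t$, and the inequality is reflexivity. So $F$ is $(n+1)$-cofinal. At length $n+1$ the invariant reads $c(F^*(s_1,\ldots,s_{n+1}))=i_0$ for every full flag, which is exactly $\PH_n(P,k)$.

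The main point to check is that the invariant is enforced simultaneously for all flags through $s$. This is where finiteness enters twice: finitely many flags end at $s$, and finitely many proper subtuples must be bounded. Both are handled by the filter property of $U$ and by directedness.
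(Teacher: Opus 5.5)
Your proposal is correct and is essentially the paper's proof. Both extend the cone filter to an ultrafilter $U$, define the derived colourings $c_r$ by downward recursion with $i=c_0(\varnothing)$, and build $F$ by tuple length, choosing each value in the finite intersection of the $U$-large flag sets with a cone above the previously defined values.
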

\begin{proof}
Put $m=n+1$ and $\mathord\uparrow p=\{q\in P:p\leq_P q\}$.
Directedness lets us extend the filter generated by these upper cones
to an ultrafilter $U$ on $P$.

Let $c:P^m\to k$ be given. Starting with $c_m=c$, define the functions
$c_r:P^r\to k$ for $r=m-1,\ldots,0$ by
\[
 c_r(a)=i\quad\Longleftrightarrow\quad
 \{x\in P:c_{r+1}(a\concat(x))=i\}\in U.
\]
The fibers form a finite partition, so $c_r(a)$ is uniquely defined.
Put $i=c_0(\varnothing)$. Construct $F:P^{\leq m}\to P$ by tuple
length, maintaining
\begin{equation}\label{eq:finite-invariant}
 c_r(F(s_1),\ldots,F(s_r))=i
 \qquad\text{for every full $r$-flag.}
\end{equation}
For each $p\in P$, choose
\[
 F(p)\in\mathord\uparrow p\cap\{x:c_1(x)=i\}.
\]
Both sets belong to $U$, so this starts the induction.

Suppose $2\leq r\leq m$ and $F$ has been defined on all shorter
tuples. Fix $t\in P^r$. For every full $(r-1)$-flag
$(s_1,\ldots,s_{r-1})$ whose last tuple is a subtuple of $t$, put
\[
 D_{s_1,\ldots,s_{r-1}}
   =\{x\in P:c_r(F(s_1),\ldots,F(s_{r-1}),x)=i\}.
\]
The invariant and the definition of $c_{r-1}$ put every such set in
$U$. Require also $F(t)\in\mathord\uparrow F(s)$ for each nonempty
proper subtuple $s\unlhd t$. Only finitely many sets are involved, so
choose $F(t)$ in their intersection. These choices make $F$ monotone
under subtuples and preserve \eqref{eq:finite-invariant}. At $r=m$,
the invariant gives homogeneity, while the singleton choices give
$p\leq_P F(p)$.
\end{proof}

For infinite colors, we add a two-valued coordinate to the coloring
so that the chosen upper bounds respect subtuples, as in
\cite[Lemma~8.7]{BBMT}.

\begin{theorem}\label{thm:tukey}
Let $P\leq_T Q$ be directed quasi-orders. For every cardinal $\lambda$
and every $n<\omega$,
\[
 \PH_n(Q,\lambda)\ \Longrightarrow\ \PH_n(P,\lambda).
\]
\end{theorem}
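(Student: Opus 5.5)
The approach is to pull a homogeneous cofinal map for $Q$ back along a Tukey map. Fix a Tukey map $g:P\to Q$ and put $m=n+1$. For each $q\in Q$ the set $A_q=\{p\in P:g(p)\leq_Q q\}$ is bounded, so we may choose $h(q)\in P$ bounding $A_q$. Then $h:Q\to P$ is cofinal in the weak sense that $p\leq_P h(g(p))$ for every $p$, since $p\in A_{g(p)}$. Moreover $q\leq_Q q'$ implies $A_q\subseteq A_{q'}$, though not $h(q)\leq_P h(q')$. The plan is as follows. Given $c:P^m\to\lambda$, define a coloring $d$ of $Q^m$. Apply $\PH_n(Q,\lambda)$ to $d$ to get an $m$-cofinal $G:Q^{\leq m}\to Q$ homogeneous for $d$. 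Then set $F(s)=h(G(g\circ s))$ for $s\in P^{\leq r}$, where $g\circ s$ is the coordinatewise image. Subtuple relations are preserved by $g\circ{-}$, so $s\unlhd t$ gives $G(g\circ s)\leq_Q G(g\circ t)$, and $p\leq h(g(p))\leq\ldots$ should give $p\leq_P F(p)$ once monotonicity of $h$ along $G$-values is secured.

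The whole difficulty is that $h$ is not monotone. Following the hint of \cite[Lemma~8.7]{BBMT}, I add a two-valued coordinate. Let $\lambda$ be infinite, or else apply \Cref{thm:finite} directly, so that $\lambda\times2$ has size $\lambda$ and $d$ may take values in $\lambda\times 2$. For $(y_1,\ldots,y_m)\in Q^m$ put
\[
 d(y_1,\ldots,y_m)=\bigl(c(h(y_1),\ldots,h(y_m)),\ \epsilon\bigr),
\]
where $\epsilon=1$ exactly when $g(h(y_r))\leq_Q y_{r+1}$ for all $r<m$. When $\epsilon=1$ along a flag, $h(y_r)\in A_{g(h(y_r))}\subseteq A_{y_{r+1}}$, so $h(y_r)\leq_P h(y_{r+1})$. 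This is exactly the monotonicity of $F$ along the flag that we need. Homogeneity of $G$ then makes the first coordinate, namely $c\circ F^*$ on full $m$-flags of $P$, constant, since full flags of $P$ map under $g\circ{-}$ to full flags of $Q$.

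The main obstacle is showing that the homogeneous bit is $1$, and also deriving monotonicity of $F$ for arbitrary pairs $s\unlhd t$, not only consecutive flag members. I would first try to strengthen the $Q$-side map before coloring. By induction on tuple length, replace $G$ by $G'$ with $G'(t)$ an upper bound of $G(t)$ and of $g(h(G'(s)))$ for all proper subtuples $s\unlhd t$, which uses directedness of $Q$. The difficulty is that homogeneity was obtained for $G$, not $G'$. So the coloring $d$ itself must be phrased so that a homogeneous $G$ already satisfies the domination. One option is to define $d$ on the prefix-dominated tuples $y'_1=y_1$, $y'_{r+1}=$ a fixed chosen upper bound of $y_{r+1}$ and $g(h(y'_r))$, and then use $F(s)=h(G'(g\circ s))$ computed consistently. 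Checking that this choice depends only on the tuple $s$ and not on the flag is the step I expect to require the most care.

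Finally, I would verify the two cofinality clauses for $F$, and verify that the case $\lambda<\omega$ is covered by \Cref{thm:finite}.
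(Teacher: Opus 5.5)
Your coloring $d$ with the extra bit $\epsilon$, and the pullback $F(s)=h(G(g\circ s))$, are exactly the construction the paper uses. However, the proposal stops at the decisive step. You never show that the homogeneous bit is $1$, nor that $h\circ G$ is monotone along all subtuple inclusions, and the detour you propose instead does not work. Replacing $G$ by $G'$ loses homogeneity, as you note yourself. In the prefix-dominated recoloring, the value $y'_r$ depends on the whole prefix $(s_1,\ldots,s_r)$ of the flag, not only on $s_r$, so $h(G'(g\circ s))$ is not a well-defined function of the tuple $s$.

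No modification is needed. By homogeneity it suffices to exhibit \emph{one} full $m$-flag in $Q$ whose $G^*$-image has bit $1$. Take any singleton $t_1$ and put $t_{j+1}=t_j\concat(q_j)$ with $q_j=g(h(G(t_j)))$ for $1\leq j<m$. Since $(q_j)\unlhd t_{j+1}$, the two clauses of $m$-cofinality give
\[
 g(h(G(t_j)))=q_j\leq_Q G(q_j)\leq_Q G(t_{j+1}).
\]
So $(t_1,\ldots,t_m)$ has bit $1$, and hence every full flag does.

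Monotonicity then follows because every immediate inclusion $s\unlhd t$ with $|t|=|s|+1\leq m$ occurs as consecutive members of some full flag. To build it, delete coordinates of $s$ to get the earlier members and append coordinates to $t$ to get the later ones. The bit on that flag gives $g(h(G(s)))\leq_Q G(t)$, so $h(G(s))\in A_{G(t)}$ and therefore $h(G(s))\leq_P h(G(t))$. Arbitrary inclusions factor into immediate ones, since equal lengths force equality. Since $g\circ{-}$ preserves $\unlhd$, $F$ is then monotone.

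Your singleton clause also should not pass through $h(g(p))$, because that would need the monotonicity of $h$ that is unavailable. Instead, $g(p)\leq_Q G(g(p))$ gives $p\in A_{g(p)}\subseteq A_{G(g(p))}$, hence $p\leq_P F(p)$ directly. With these arguments supplied, your outline becomes the paper's proof.
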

\begin{proof}
The case $\lambda=0$ is vacuous, and \Cref{thm:finite} handles positive
finite $\lambda$. Suppose $\lambda$ is infinite.

Fix a Tukey map $g:P\to Q$. For each $q\in Q$, choose an upper bound
$r(q)$ of $A_q$ in $P$, arbitrarily if $A_q$ is empty.
Given $c:P^{n+1}\to\lambda$, define
\[
 d(q_0,\ldots,q_n)
 =\bigl(c(r(q_0),\ldots,r(q_n)),\ b(q_0,\ldots,q_n)\bigr),
\]
where
\[
 b(q_0,\ldots,q_n)=1
 \quad\Longleftrightarrow\quad
 \forall j<n\ \ g(r(q_j))\leq_Q q_{j+1}.
\]
The empty conjunction is true when $n=0$. Identify
$\lambda\times2$ with $\lambda$ and apply $\PH_n(Q,\lambda)$.
There is an $(n+1)$-cofinal $F:Q^{\leq n+1}\to Q$ such that
$d\circ F^*$ has a constant value $(i,\varepsilon)$.

We claim $\varepsilon=1$. This is immediate for $n=0$. Otherwise,
start with a singleton $t_1$ and, for
$1\leq j\leq n$, put
$t_{j+1}=t_j\concat(g(r(F(t_j))))$.
Writing $q=g(r(F(t_j)))$, we have $(q)\unlhd t_{j+1}$, so
\[
 g(r(F(t_j)))=q\leq_Q F(q)\leq_Q F(t_{j+1}).
\]
This produces a full flag with second coordinate $1$, so all flags
have that value.

For $n\geq1$, extend any immediate subtuple inclusion $s\unlhd t$,
$|t|=|s|+1\leq n+1$, to a full flag by deleting and then appending
coordinates. Its second color coordinate gives
\[
 g(r(F(s)))\leq_Q F(t).
\]
Thus $r(F(s))\in A_{F(t)}$, and our choice of $r(F(t))$ gives
\[
 r(F(s))\leq_P r(F(t)).
\]
Hence $r\circ F$ is monotone under all subtuple inclusions. For $n=0$
there are only singleton tuples.

For $s\in P^{\leq n+1}$, apply $g$ coordinatewise and put
\[
 H(s)=r(F(g\circ s)).
\]
For a singleton, $g(p)\leq_Q F(g(p))$ puts $p$ in $A_{F(g(p))}$,
so $p\leq_P H(p)$. Applying $g$ coordinatewise preserves tuple
lengths and subtuple inclusions.
Thus $H$ is $(n+1)$-cofinal. The first color
coordinate on any full flag $(s_1,\ldots,s_{n+1})$ now gives
\[
 c(H(s_1),\ldots,H(s_{n+1}))=i.
\]
\end{proof}

\begin{corollary}\label{cor:tukey-invariance}
For fixed $n$ and $\lambda$, $\PH_n(-,\lambda)$ is invariant under Tukey
equivalence and under passage to a cofinal suborder.
\end{corollary}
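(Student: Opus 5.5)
The plan is to derive both invariance claims directly from \Cref{thm:tukey}, so the only work is to verify the relevant Tukey reductions.

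For Tukey equivalence, suppose $P\equiv_T Q$, meaning $P\leq_TQ$ and $Q\leq_TP$. Applying \Cref{thm:tukey} in each direction gives $\PH_n(Q,\lambda)\Rightarrow\PH_n(P,\lambda)$ and $\PH_n(P,\lambda)\Rightarrow\PH_n(Q,\lambda)$. These two implications together are the equivalence.

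For cofinal suborders, let $C\subseteq P$ be cofinal with the restricted quasi-order. First note that $C$ is nonempty and directed: given finitely many points of $C$, take a common upper bound in $P$ and then an element of $C$ above it. By the first part, it then suffices to show $C\equiv_T P$, and I would exhibit the two Tukey maps explicitly.

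\begin{enumerate}
\item For $C\leq_TP$, use the inclusion $g:C\to P$. For $q\in P$, the set $A_q=\{c\in C:c\leq_P q\}$ is bounded in $C$ by any element of $C$ above $q$, which exists by cofinality.
\item For $P\leq_TC$, choose for each $p\in P$ some $g'(p)\in C$ with $p\leq g'(p)$. For $c\in C$, every $p$ with $g'(p)\leq c$ satisfies $p\leq c$, so $A_c$ is bounded by $c$ itself.
\end{enumerate}

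So $C\equiv_TP$, and the first part gives $\PH_n(C,\lambda)\Leftrightarrow\PH_n(P,\lambda)$.

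There is no real obstacle here. The one point needing care is that both quasi-orders must be nonempty and directed for \Cref{thm:tukey} to apply, and this is checked above for $C$.
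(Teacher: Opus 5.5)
Your proposal is correct and matches the paper's proof: both apply \Cref{thm:tukey} in each direction for Tukey equivalence. For cofinal suborders, both show Tukey equivalence via the inclusion map and a choice of upper bounds in the suborder, after checking that the suborder is nonempty and directed.
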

\begin{proof}
Apply \Cref{thm:tukey} in both directions for Tukey equivalence.
If $P\subseteq Q$ is cofinal with the inherited order, then $P$ is
directed. Choose $p_q\in P$ above each $q\in Q$. The inclusion
$P\to Q$ is Tukey since $\{p\in P:p\leq_Qq\}$ is bounded by $p_q$.
Conversely, $p_q\leq_Pp$ implies $q\leq_Qp$, so $q\mapsto p_q$
is Tukey. Apply the first assertion.
\end{proof}

\Cref{thm:ph-weaklycompact,thm:tukey}, together with
\Cref{cor:ph-consistency}, prove \Cref{thm:ph-intro}.

\section{Further Consequences}\label{sec:consequences}

In the first two subsections, fix a coloring $c:[\kappa]^2\to\omega$
and use $G_i$, $N_i$ and $\Gamma_i^X$ for this coloring.

\subsection{Connectivity after deletions}\label{sec:ideal-connectivity}

Let $\mathcal J$ be a proper ideal on $\kappa$ containing
$[\kappa]^{<\kappa}$. For $X\in[\kappa]^\kappa$, let
$e_X:\kappa\to X$ be its increasing enumeration and set
\[
 \mathcal J_X=\{e_X[A]:A\in\mathcal J\}.
\]
In the diagonal case of \cite[Problem~8.6]{HSZ}, the relation
$\kappa\to_{\mathcal J\text{-hc}}(\kappa)^2_\omega$ asserts that every
$c:[\kappa]^2\to\omega$ has $i<\omega$ and $X\in[\kappa]^\kappa$
such that $G_i[X\setminus Z]$ is connected for every
$Z\in\mathcal J_X$. We add $<n$ when all required paths have fewer
than $n$ edges. In particular,
$(NS_\kappa)_X=\{e_X[A]:A\in NS_\kappa\}$.

\begin{proposition}\label{prop:positive-ideal-connectivity}
Let $I$ be an ideal on $\kappa$ containing $NS_\kappa$. Suppose
$X\in I^+$ and $\Gamma_i^X(F)\in I^+$ for every $F\in[X]^{<\omega}$.
For every $Z\in I$,
\begin{equation}\label{eq:positive-after-deletion}
 X\setminus Z\in I^+,\qquad
 \forall F\in[X\setminus Z]^{<\omega}\quad
 \Gamma_i^{X\setminus Z}(F)\in I^+.
\end{equation}
Hence $G_i[X\setminus Z]$ has diameter at most two.
The same $X$ is a $\mathcal J\text{-hc},<3$ witness for every ideal
$\mathcal J$ with $[\kappa]^{<\kappa}\subseteq\mathcal J\subseteq I$.
\end{proposition}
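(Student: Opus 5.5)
The proof is direct set algebra. Fix $Z\in I$. Since $X\in I^+$ and $Z\in I$, the set $X\setminus Z$ is positive: otherwise $X\subseteq(X\setminus Z)\cup Z\in I$. For finite $F\subseteq X\setminus Z$ we also have $F\subseteq X$, and
\[
 \Gamma_i^{X\setminus Z}(F)=\Gamma_i^X(F)\setminus Z .
\]
This is positive by the same argument, using the hypothesis $\Gamma_i^X(F)\in I^+$. This gives \eqref{eq:positive-after-deletion}.

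For the diameter claim, note that $I$ is proper because $X\notin I$. So positive sets are nonempty, and indeed infinite: finite sets are nonstationary, hence lie in $NS_\kappa\subseteq I$. Take distinct $\alpha,\beta\in X\setminus Z$ and let $F=\{\alpha,\beta\}$. The set $\Gamma_i^{X\setminus Z}(F)$ is nonempty. Any $\gamma$ in it is different from $\alpha$ and $\beta$, since $N_i(x)$ excludes $x$, and $\gamma$ is $i$-adjacent to both. So $\alpha,\gamma,\beta$ is a path of at most two edges inside $G_i[X\setminus Z]$. Positivity of $X\setminus Z$ also makes the graph nonempty, so the diameter is at most two.

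For the last assertion, let $[\kappa]^{<\kappa}\subseteq\mathcal J\subseteq I$. First, $|X|=\kappa$: $X$ is positive, so it is stationary and hence unbounded. Thus $e_X$ and $\mathcal J_X$ make sense. It remains to show $\mathcal J_X\subseteq I$, after which the diameter bound above applies to every $Z\in\mathcal J_X$.

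This inclusion is the one genuinely non-formal step, because $e_X[A]$ need not be a subset of $A$. Take $A\in\mathcal J\subseteq I$. I will use the club $C$ of closure points $\delta$ of $e_X$, meaning those $\delta$ with $e_X[\delta]\subseteq\delta$, equivalently $e_X(\delta)=\delta$ limit points of $X$ indexed at $\delta$. This set $C$ is club in $\kappa$, and for $\delta\in C$ we have $e_X(\delta)=\delta$ whenever $\delta\in X$. On $C\cap X$, then, $e_X$ fixes points, so $e_X[A]\cap C\cap X\subseteq A$. The remaining part of $e_X[A]$ lies in $\kappa\setminus C$, which belongs to $NS_\kappa\subseteq I$. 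Therefore $e_X[A]\subseteq A\cup(\kappa\setminus C)\in I$.

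I expect this to be the main obstacle. The key fact to check is that $e_X(\delta)=\delta$ on a club; it holds because $\{\delta:e_X[\delta]\subseteq\delta\}$ is club and $e_X$ is increasing, so $e_X(\delta)\geq\delta$ always, with equality on limit points of $X$ in that club. For safety I will instead bound $e_X[A]\cap C$ using $e_X(\xi)\in C$ implies $\xi\le e_X(\xi)$, handling the case where $e_X(\xi)\neq\xi$ by showing such values avoid a club. Alternatively, if normality is unavailable, I fall back on the pointwise fixed-point club. Once $\mathcal J_X\subseteq I$ is established, \eqref{eq:positive-after-deletion} gives connectivity with paths of fewer than three edges.
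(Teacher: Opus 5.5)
Your proof is correct and follows the paper's route: you use the identity $\Gamma_i^{X\setminus Z}(F)=\Gamma_i^X(F)\setminus Z$, and then the club $C=\{\delta:e_X[\delta]\subseteq\delta\}$ to get $e_X[A]\subseteq A\cup(\kappa\setminus C)$. The paper argues directly that $e_X(\alpha)\in C$ forces $e_X(\alpha)=\alpha$, while you show that $e_X$ fixes $C\cap X$ and then tacitly use injectivity of $e_X$; the two are equivalent. Your closing paragraph misstates the key fact, since $e_X(\delta)=\delta$ holds only for $\delta\in C\cap X$, not on a whole club (that would force the stationary set $X$ to contain a club) nor at limit points of $X$ outside $X$, and normality of $I$ plays no role — but the argument you actually gave uses only the correct version.
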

\begin{proof}
For finite $F\subseteq X\setminus Z$,
\[
 \Gamma_i^{X\setminus Z}(F)=\Gamma_i^X(F)\setminus Z\in I^+.
\]
Taking $F=\varnothing$ gives $X\setminus Z\in I^+$, and taking
$F=\{x,y\}$ gives the diameter bound.

Since $NS_\kappa\subseteq I$, positivity gives $|X|=\kappa$.
Let $e=e_X$ and let $C_e=\{\delta<\kappa:e``\delta\subseteq\delta\}$,
a club. If $e(\alpha)\in C_e$, then $e(\alpha)=\alpha$: otherwise
$\alpha<e(\alpha)$ contradicts closure. Hence, for every
$A\subseteq\kappa$,
\[
 e[A]\cap C_e\subseteq A,\qquad
 e[A]\subseteq A\cup(\kappa\setminus C_e).
\]
If $A\in\mathcal J\subseteq I$, it follows that $e_X[A]\in I$.
Apply the ambient conclusion with $Z=e_X[A]$.
\end{proof}

Consequently, the normal case of \Cref{thm:ideal-criterion} gives
\[
 \kappa\to_{I\text{-hc},<3}(\kappa)^2_\omega,
 \qquad
 \kappa\to_{NS_\kappa\text{-hc},<3}(\kappa)^2_\omega,
\]
with the same witnesses for every ideal between $[\kappa]^{<\kappa}$
and $I$. Taking $I=NS_\kappa$ in the proposition also gives
\[
 \SFCN_\omega(\kappa)\ \Longrightarrow\
 \kappa\to_{NS_\kappa\text{-hc},<3}(\kappa)^2_\omega.
\]

\subsection{Stationary subdivisions}\label{sec:subdivisions}

A \emph{$<2$-subdivision} of $K_\kappa$ has $\kappa$ branch vertices
and replaces each edge by a path with zero or one internal vertex.
The internal vertices are distinct and lie outside the branch set.
If the branch set is enumerated increasingly
as $\langle b_\alpha:\alpha<\kappa\rangle$, the subdivision is
\emph{order-rigid} when the internal vertex for $b_\alpha<b_\beta$
lies strictly between $b_\beta$ and $b_{\beta+1}$.

Common-neighbor constructions of topological complete graphs occur in
Erd\H{o}s--Hajnal \cite[proof of Theorem~3]{ErdosHajnal} and
Komj\'ath--Shelah \cite[Lemma~4]{KS}.

\begin{theorem}\label{thm:stationary-subdivision}
Suppose \(X\subseteq\kappa\) and \(i<\omega\) satisfy
\eqref{eq:stationary-fcn}. There is a club \(C\subseteq\kappa\)
such that \(B=X\cap C\) still satisfies \eqref{eq:stationary-fcn}
and is the branch set of an order-rigid color-\(i\) \(<2\)-subdivision
of \(K_\kappa\) with the following property: every color-\(i\)
edge on \(B\) is left unsubdivided, and every other pair is
represented by a two-edge path whose internal vertex belongs to
\(X\setminus C\).

In particular, every finite \(F\subseteq B\) has stationarily many
common unsubdivided neighbors in \(B\). For every \(b\in B\),
the later branches joined to \(b\) by unsubdivided edges form a
stationary subset of \(\kappa\).
\end{theorem}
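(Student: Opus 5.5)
The plan is to get the club $C$ as the set of closure points of a single function $g:\kappa\to\kappa$. The function $g$ reserves, immediately above each $y\in X$, an interval containing private common neighbors for all pairs $x<y$ in $X$. Closure points of $g$ cannot land inside these reserved intervals. So the internal vertices automatically lie in $X\setminus C$, strictly between consecutive branch points.

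\emph{Step 1 (choosing the reserved neighbors).} Fix $y\in X$. Enumerate the pairs $\{x,y\}$ with $x\in X\cap y$ in order type at most $|y|<\kappa$. By \eqref{eq:stationary-fcn}, each $\Gamma_i^X(\{x,y\})$ is stationary, hence unbounded. Recursively choose $z_{x,y}\in\Gamma_i^X(\{x,y\})$ above $y$ and above all previously chosen $z_{x',y}$. Regularity keeps the supremum of fewer than $\kappa$ earlier choices below $\kappa$, so the recursion never stalls. Put $g(y)=\sup\{z_{x,y}+1:x\in X\cap y\}<\kappa$, using regularity again, and put $g(\gamma)=0$ for $\gamma\notin X$. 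The $z_{x,y}$ are distinct for fixed $y$ and lie in the interval $(y,g(y))$.

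\emph{Step 2 (the club).} Let $C=\{\delta<\kappa:\delta\text{ is a limit and }g[\delta]\subseteq\delta\}$. This is a club by regularity. Put $B=X\cap C$, enumerated increasingly as $\langle b_\alpha:\alpha<\kappa\rangle$. For finite $F\subseteq B$ we have $\Gamma_i^B(F)=\Gamma_i^X(F)\cap C$. This set is stationary because $\Gamma_i^X(F)$ is stationary and $C$ is club. Hence $B$ satisfies \eqref{eq:stationary-fcn}; in particular $B$ is stationary and has size $\kappa$.

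\emph{Step 3 (the subdivision and order-rigidity).} This is the step I expect to need the most care, namely checking that the internal vertices avoid $C$, are distinct, and sit in the right gaps. Let $\alpha<\beta$ and put $y=b_\beta$. If $c(\{b_\alpha,y\})=i$, leave the edge unsubdivided. Otherwise use the two-edge path through $z=z_{b_\alpha,y}$, both of whose edges have color $i$.

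Since $y\in C$ and $\delta'=b_{\beta+1}\in C$ with $\delta'>y$, the closure property gives $g(y)<\delta'$. So $z\in(y,g(y))\subseteq(b_\beta,b_{\beta+1})$, which is order-rigidity. Any $\delta\in C$ with $y<\delta$ satisfies $\delta>g(y)>z$, and any $\delta\in C$ with $\delta\le y$ satisfies $\delta<z$. Hence $z\notin C$, and so $z\in X\setminus C$, outside the branch set. Internal vertices for different $\beta$ lie in the disjoint intervals $(b_\beta,b_{\beta+1})$, and for fixed $\beta$ they are distinct by Step 1. Thus the internal vertices are pairwise distinct.

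\emph{Step 4 (the final assertions).} The common unsubdivided neighbors of a finite $F\subseteq B$ inside $B$ are exactly $\Gamma_i^B(F)$, which is stationary by Step 2. For $b\in B$, the later branches joined to $b$ by unsubdivided edges form $\Gamma_i^B(\{b\})\setminus(b+1)$. This is a stationary set minus a bounded set, so it is stationary.
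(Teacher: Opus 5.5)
Your proof is correct and follows essentially the paper's approach: above each $y\in X$ you reserve distinct color-$i$ common neighbors of $y$ and each earlier $x\in X\cap y$, and you pick a club whose consecutive points leave room for them, so every internal vertex lands in $(b_\beta,b_{\beta+1})\cap(X\setminus C)$. The only difference is packaging: the paper builds $C$ as the range of a continuous increasing sequence $\langle\gamma_\xi\rangle$ and chooses witnesses only at stages with $\gamma_\xi\in X$, whereas you choose witnesses for every $y\in X$ in advance and take $C$ to be the limit closure points of the resulting function $g$.
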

\begin{proof}
Construct a strictly increasing continuous sequence
\(\langle\gamma_\xi:\xi<\kappa\rangle\), leaving room between
successive points for the internal vertices. Set \(\gamma_0=0\).
At stage \(\xi\), if \(\gamma_\xi\in X\), choose distinct vertices
\[
 w_{\xi,x}\in\Gamma_i^X(\{x,\gamma_\xi\})
                      \setminus(\gamma_\xi+1)
 \qquad(x\in X\cap\gamma_\xi).
\]
There are fewer than $\kappa$ choices, each from a set of size
$\kappa$, so they can be made distinctly. Choose
\(\gamma_{\xi+1}<\kappa\) above \(\gamma_\xi\) and all these
vertices. If \(\gamma_\xi\notin X\), choose any
\(\gamma_{\xi+1}>\gamma_\xi\). At a nonzero limit
\(\xi<\kappa\), let \(\gamma_\xi=\sup_{\eta<\xi}\gamma_\eta\).
Regularity keeps every stage below \(\kappa\).

The range \(C=\{\gamma_\xi:\xi<\kappa\}\) is club. Put
\(B=X\cap C\), and enumerate it increasingly as
\(\langle b_\alpha:\alpha<\kappa\rangle\). For every finite
\(F\subseteq B\),
\[
 \Gamma_i^B(F)=\Gamma_i^X(F)\cap C
\]
is stationary. For \(b_\alpha<b_\beta\), keep the direct edge
if it has color \(i\). Otherwise, write \(b_\beta=\gamma_\xi\)
and use the path
\[
 b_\alpha-w_{\xi,b_\alpha}-b_\beta.
\]
Its edges have color \(i\), and
\[
 b_\beta<w_{\xi,b_\alpha}<\gamma_{\xi+1}\le b_{\beta+1}.
\]
The internal vertices are distinct and avoid \(C\). The final
assertions follow from the stationarity of \(\Gamma_i^B(F)\), since
all color-\(i\) edges on \(B\) were kept.
\end{proof}

Under \Cref{thm:normal-extraction}, the same construction keeps
$B,\Gamma_i^B(F)\in I^+$, since intersection with a club preserves
positivity when $NS_\kappa\subseteq I$.

\subsection{Global ideal applications}\label{sec:applications}

We use the following form of the collapse-ideal theorem from
\cite[Definition~3.1 and Lemma~3.2]{JL}.

\begin{fact}\label{thm:collapse-ideal}
Suppose $\kappa$ is measurable, $\mu<\kappa$ is uncountable regular,
and $G\subseteq\Coll(\mu,{<}\kappa)$ is generic over $V$.
In $V[G]$, $\kappa=\mu^+$ carries a proper uniform normal
$\kappa$-complete ideal $I$ such that $(I^+,\subseteq)$ has a
${<}\mu$-closed dense subset.
\end{fact}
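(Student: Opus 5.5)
This Fact is a citation of Jech--Magidor's collapse-ideal theorem, so the plan is to reconstruct their argument with the seed-ideal machinery of \Cref{sec:seed-ideal}, using a genuine measurable ultrapower in place of the weak models. Fix a normal measure $U$ on $\kappa$ in $V$ and let $j:V\to M$ be its ultrapower, so $M^\kappa\subseteq M$ and $j\restriction V_\kappa=\mathrm{id}$. Put $P=\Coll(\mu,{<}\kappa)$. Splitting conditions at $\kappa$, as in the proof of \Cref{thm:ramseycollapse}, gives $j(P)\cong P\times Q$ with $Q=\Coll(\mu,[\kappa,j(\kappa)))^M$ and $j(p)\mapsto(p,1_Q)$. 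In $V[G]$, define $b_A=\|\check\kappa\in j(\tau)_G\|_{\RO(Q)}$ for $A=\tau^G\subseteq\kappa$, and let $I=\{A:b_A=0\}$. This is \eqref{eq:local-seed} with $M=V$ and $W=V[G]$, so $\calA=\Pow(\kappa)^{V[G]}$.

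Next I will transfer \Cref{lem:seed-ideal}(1) and (3) to this setting. The arguments there used only elementarity, the lifting criterion, and the normal form \eqref{eq:normal-form}, and all of these hold for $j:V\to M$. They show that $I$ is proper, contains bounded sets and club complements, and is closed under unions of fewer than $\kappa$ sets and under diagonal unions. The only change is that every sequence in $V[G]$ is now internal, so these closure properties are genuine $\kappa$-completeness and normality. Part (3) says that $\iota[Q]$ lies in the range of the embedding $[A]_I\mapsto b_A$.

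For the dense set, I will apply \Cref{lem:closed-models} with $\rho=\mu$. The hypotheses check out: $V^{<\mu}\subseteq V$ trivially, $P$ is ${<}\mu$-closed and so adds no short sequences, and $M$ regards $Q$ as ${<}\mu$-closed. Moreover, $M^\kappa\subseteq M$ makes $Q$ externally ${<}\mu$-closed in $V$, and this persists to $V[G]$ because $P$ adds no short sequences. \Cref{lem:closed-tail} with $E=\iota[Q]$ then gives that $\{A\in I^+:b_A\in\iota[Q]\}$ is ${<}\mu$-closed and dense.

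The remaining point, and the one I expect to need the most care, is that the seed-ideal lemmas were stated for set-sized weak $\kappa$-models. To avoid this, I will replace $V$ by a $\kappa$-model $M_0\prec H(\theta)$ for large $\theta$, chosen closed under ${<}\kappa$-sequences and containing $U$. I will then check that the resulting ideal is the same as the class version, since $\Pow(\kappa)^{V[G]}$ is captured by names of hereditary size $\kappa$ and $M_0$ can be taken to contain all of them. Alternatively, I can run the proofs directly with the class embedding, noting that only definable forcing over $V$ is used. The identity $\kappa=\mu^+$ follows from the $\kappa$-c.c.\ and the collapse.
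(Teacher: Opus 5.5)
Your argument is correct, but it does something the paper does not do. The paper gives no proof of this Fact; it imports it from its collapse-ideal reference (Definition~3.1 and Lemma~3.2 there). You instead derive it from the paper's own seed-ideal machinery:
\begin{enumerate}
\item With the full ultrapower by a normal measure, every sequence in $V[G]$ is internal. So \Cref{lem:seed-ideal}(1) gives genuine properness, uniformity, $\kappa$-completeness and normality of $I$ on all of $\Pow(\kappa)^{V[G]}$.
\item Part~(3) puts $\iota[Q]$ in the range of $[A]_I\mapsto b_A$.
\item $M^\kappa\subseteq M$, together with the ${<}\mu$-distributivity of $\Coll(\mu,{<}\kappa)$, makes $\iota[Q]$ externally ${<}\mu$-closed in $V[G]$.
\item \Cref{lem:closed-tail} then yields the dense set.
\end{enumerate}
Part~(2) and powerset preservation are not needed, because $W$ is already all of $V[G]$. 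The citation buys brevity. Your route makes the Fact literally the global instance of the construction behind \Cref{thm:ramseycollapse,thm:ph-weaklycompact}. It also shows that the measure is needed precisely so that the seed model can contain every subset of $\kappa$.

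One remark in your last paragraph is wrong as stated. A $\kappa$-model has cardinality $\kappa$, so it cannot contain all $2^\kappa$ nice names for subsets of $\kappa$. Then $\Pow(\kappa)\cap M_0[G]$ would be a subalgebra of size $\kappa$, not $\Pow(\kappa)^{V[G]}$. The repair is immediate, because the proof of \Cref{lem:seed-ideal} never uses $|M|=\kappa$. It uses only transitivity, $\ZFC^-$, well-foundedness and \eqref{eq:normal-form}. So take $M_0=H(\kappa^+)$: it is closed under $\kappa$-sequences, contains every nice name, and $U$ is an $M_0$-ultrafilter on it. By the argument of \Cref{lem:ambient-stationarity}, $M_0[G]=H(\kappa^+)^{V[G]}$ contains every subset of $\kappa$ and every $\kappa$-sequence of such subsets. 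Your alternative of working with the class embedding directly is also fine. Each $b_A$ involves only the set $j(\tau)$, and the lift is an ordinary set-forcing lift over the target model.
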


For every infinite $\lambda<\mu$, this ideal gives
\begin{equation}\label{eq:collapse-many-colors}
 \SFCN_\lambda(\kappa)\qquad\text{and}\qquad
 \PH_1(\kappa,\lambda).
\end{equation}
Indeed, run the first two recursions in \Cref{lem:sequence} for
$\lambda$ stages. The ${<}\mu$-closure supplies lower bounds at
limits and at the end, while $\kappa$-completeness handles unions
over the colors. We obtain a positive $A$ and a color $i<\lambda$
such that every positive subset of $A$ contains an $i$-frequent
pair. The rest of the argument uses only this fixed color and gives
$\SFCN_\lambda(\kappa)$. Apply \Cref{prop:fcnph} for $\PH_1$.

Eskew--Hayut \cite[Theorem~56]{EH} prove that, from a huge cardinal
$\theta$, there is a forcing extension in which $\theta$ remains
inaccessible and $V_\theta$ satisfies the following: for every regular
$\mu$, there is a proper uniform normal $\mu^+$-complete ideal
$I_\mu$ on $\mu^+$ with
\[
 \Pow(\mu^+)/I_\mu\cong\RO(\Coll(\mu,\mu^+)).
\]
Here $\Coll(\mu,\mu^+)$ is the standard $<\mu$-closed collapse.

\begin{corollary}\label{cor:global-successors}
Relative to a huge cardinal, there is a model of $\ZFC$ such that,
for every regular $\mu\geq\omega_1$,
\[
 \SFCN_\omega(\mu^+),\qquad
 \mu^+\to_{NS_{\mu^+}\text{-hc},<3}(\mu^+)^2_\omega.
\]
At each target, every countable coloring also has the stationary
$<2$-subdivision of \Cref{thm:stationary-subdivision}.
\end{corollary}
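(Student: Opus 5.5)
Work in the Eskew--Hayut model: the extension $V[G]$ of a universe with a huge cardinal $\theta$, in which $\theta$ stays inaccessible and $V_\theta$ satisfies the stated ideal property. The model of $\ZFC$ we want is $V_\theta^{V[G]}$. Inaccessibility makes $V_\theta\models\ZFC$. Fix a regular $\mu\geq\omega_1$ in $V_\theta$, and put $\kappa=\mu^+$ and $I=I_\mu$, a proper uniform normal $\kappa$-complete ideal on $\kappa$ with $\Pow(\kappa)/I\cong\RO(\Coll(\mu,\mu^+))$.

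\textbf{Step 1.} We first produce a countably closed dense subset of $(I^+,\subseteq)$. The collapse $\Coll(\mu,\mu^+)$ is ${<}\mu$-closed with $\mu\geq\omega_1$, so it is countably closed. Its canonical image $E$ in $\RO(\Coll(\mu,\mu^+))^+$ is dense, and it is ${<}\omega_1$-closed in the Boolean order. The reason is that the canonical map is a dense order embedding of a separative poset, so lower bounds in the poset map to Boolean lower bounds. Let $\Phi:\Pow(\kappa)/I\to\bbB$ be the isomorphism, so $E\subseteq\ran(\Phi)$. Apply \Cref{lem:closed-tail} with $\calA=\Pow(\kappa)$ and $\rho=\omega_1$; countable completeness of $I$ follows from $\kappa$-completeness. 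This yields a countably closed dense family $\mcD\subseteq I^+$ under actual inclusion.

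We must check that the separativity and closure of $\Coll(\mu,\mu^+)$ hold in $V_\theta$. They do, because all the relevant objects lie below $\theta$ and $V_\theta$ computes them correctly.

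\textbf{Step 2.} Now apply the normal case of \Cref{thm:ideal-criterion} inside $V_\theta$. For every $c:[\kappa]^2\to\omega$ it gives $i$ and $X$ with $\Gamma_i^X(F)\in I^+$ for all finite $F\subseteq X$. Since $I$ is uniform and normal, $NS_\kappa\subseteq I$, so these sets are stationary and $\SFCN_\omega(\mu^+)$ holds. \Cref{prop:positive-ideal-connectivity}, applied to the same $X,i$ and the ideal $I\supseteq NS_\kappa$, shows that $X$ witnesses the relation $\mu^+\to_{NS_{\mu^+}\text{-hc},<3}(\mu^+)^2_\omega$. Alternatively, take $I=NS_\kappa$ in that proposition directly, using only the stationarity of the witnesses. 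Finally, \Cref{thm:stationary-subdivision}, applied to the witness $X,i$ of \eqref{eq:stationary-fcn}, gives the stationary order-rigid $<2$-subdivision.

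\textbf{Main obstacle.} The hardest step is Step 1. We must make sure the quotient isomorphism really carries a dense ${<}\omega_1$-closed family into inclusion-closed positive sets, which is exactly what \Cref{lem:closed-tail} handles. We must also confirm that all the notions involved (stationarity, regularity, $\mu^+$) are evaluated in $V_\theta$ and not in the larger universe. This absoluteness holds because $\theta$ is inaccessible, so $V_\theta$ contains $\Pow(\mu^+)$ and all clubs on $\mu^+$ for every $\mu<\theta$.
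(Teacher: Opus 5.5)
Your proposal is correct and is essentially the paper's proof. Working inside $V_\theta$, you combine the quotient isomorphism and the countable closure of $\Coll(\mu,\mu^+)$ with \Cref{lem:closed-tail} to get a countably closed dense family in $(I_\mu^+,\subseteq)$, then apply the normal case of \Cref{thm:ideal-criterion}, \Cref{prop:positive-ideal-connectivity}, and \Cref{thm:stationary-subdivision}. Your absoluteness remarks are harmless but unnecessary, since the whole argument takes place inside the $\ZFC$ model $V_\theta$.
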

\begin{proof}
Work in $V_\theta$. For each regular $\mu\geq\omega_1$, the displayed
isomorphism and the countable closure of $\Coll(\mu,\mu^+)$ give,
by \Cref{lem:closed-tail}, a countably closed dense family in
$(I_\mu^+,\subseteq)$. Apply \Cref{thm:ideal-criterion} and then
\Cref{prop:positive-ideal-connectivity} and
\Cref{thm:stationary-subdivision}.
\end{proof}

The simultaneous consistency of $\PH_n(\omega_m)$ for $n<m<\omega$
is due to Eskew--Hayut \cite[Theorem~75]{EH}.

\subsection{Products}\label{sec:products}

To handle a coloring of a product, regard each section on one factor
as a color on the other. The larger-color form of
\Cref{thm:ph-weaklycompact} then gives consistency results below
products of regular cardinals.

\begin{lemma}\label{lem:ph-product}
Let $P,Q$ be nonempty directed quasi-orders, let $n<\omega$, and let
$\lambda$ be an infinite cardinal. Put $\tau=\lambda^{|P^{n+1}|}$.
If $\PH_n(P,\lambda)$ and $\PH_n(Q,\tau)$ hold, then
$\PH_n(P\times Q,\lambda)$ holds, where $P\times Q$ has the
coordinatewise order.
\end{lemma}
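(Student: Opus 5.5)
The plan is to encode the $P$-coordinate of a coloring as a single color on $Q$, homogenize on $Q$ first, and then homogenize the resulting $\lambda$-coloring on $P$. The key observation is that an $r$-tuple of elements of $P\times Q$ is the same thing as a pair $(s,t)$ with $s\in P^r$ and $t\in Q^r$ (its coordinatewise projections). We write $s\otimes t$ for the tuple whose $j$th entry is $(s(j),t(j))$. Deleting coordinates commutes with projection, so $u\unlhd v$ in $(P\times Q)^{\leq n+1}$ implies $u_P\unlhd v_P$ and $u_Q\unlhd v_Q$. Moreover, the projections of a full $(n+1)$-flag in $P\times Q$ are full $(n+1)$-flags in $P$ and in $Q$. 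Repeated coordinates are allowed in tuples, so nothing is lost by projecting.

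Fix $c:(P\times Q)^{n+1}\to\lambda$. First I define $d$ on $Q^{n+1}$ by
\[
 d(t)=\bigl(s\mapsto c(s\otimes t)\bigr)\in\lambda^{P^{n+1}}.
\]
The set $\lambda^{P^{n+1}}$ has cardinality $\tau$, so after fixing a bijection with $\tau$, $d$ is a $\tau$-coloring. Applying $\PH_n(Q,\tau)$ gives an $(n+1)$-cofinal $F_Q:Q^{\leq n+1}\to Q$ and one function $e:P^{n+1}\to\lambda$ with $d(F_Q^*(\vec t))=e$ for every full $(n+1)$-flag $\vec t$ in $Q$. Next I apply $\PH_n(P,\lambda)$ to $e$. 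This gives an $(n+1)$-cofinal $F_P:P^{\leq n+1}\to P$ and a color $i<\lambda$ with $e(F_P^*(\vec s))=i$ for every full flag $\vec s$ in $P$.

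Now define $H:(P\times Q)^{\leq n+1}\to P\times Q$ by $H(u)=(F_P(u_P),F_Q(u_Q))$.
\begin{itemize}
\item For a singleton, $(p,q)\leq(F_P(p),F_Q(q))$ coordinatewise, by cofinality of each factor map.
\item Monotonicity under $\unlhd$ follows from the projection remark and the monotonicity of $F_P$ and $F_Q$.
\end{itemize}
Hence $H$ is $(n+1)$-cofinal. For a full flag $\vec u=(u_1,\ldots,u_{n+1})$ with projections $\vec s,\vec t$, we have
\[
 H^*(\vec u)=F_P^*(\vec s)\otimes F_Q^*(\vec t),
\]
and therefore
\[
 c(H^*(\vec u))=d(F_Q^*(\vec t))\bigl(F_P^*(\vec s)\bigr)=e(F_P^*(\vec s))=i.
\]
So $c\circ H^*$ is constant on full flags.

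I expect no serious obstacle; the only step needing care is this bookkeeping. Concretely, one must check that projection sends full flags to full flags with the same lengths, and that the coordinates of $H^*(\vec u)$ are exactly the pairs obtained from the separately computed $F_P^*(\vec s)$ and $F_Q^*(\vec t)$. Both checks are immediate from the definitions of $\unlhd$ and of $F^*$. The hypothesis $\PH_n(Q,\tau)$ with the full exponent $\tau=\lambda^{|P^{n+1}|}$ is what lets $F_Q$ be chosen before $F_P$ is known, since $e$ must be uniform over all possible $P$-tuples.
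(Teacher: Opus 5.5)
Your proposal is correct and follows the paper's proof essentially verbatim: encode the $P$-sections as a $\tau$-valued coloring $d$ of $Q^{n+1}$, homogenize with $\PH_n(Q,\tau)$ to a single function on $P^{n+1}$, homogenize that function with $\PH_n(P,\lambda)$, and take $F(s)=(F_P(s_P),F_Q(s_Q))$. Your bookkeeping on projections of full flags and the identity $H^*(\vec u)=F_P^*(\vec s)\otimes F_Q^*(\vec t)$ are exactly the checks the argument needs.
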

\begin{proof}
Given $c:(P\times Q)^{n+1}\to\lambda$, define
\[
 d:Q^{n+1}\longrightarrow{}^{P^{n+1}}\lambda,
 \qquad
 d(q_0,\ldots,q_n)(p_0,\ldots,p_n)
   =c((p_0,q_0),\ldots,(p_n,q_n)).
\]
The range of $d$ has size at most $\tau$. By $\PH_n(Q,\tau)$, there
are an $(n+1)$-cofinal map $F_Q$ and a function
$h:P^{n+1}\to\lambda$ such that $d\circ F_Q^*$ is constantly $h$.
Apply $\PH_n(P,\lambda)$ to $h$ to obtain an $(n+1)$-cofinal map
$F_P$ and a color $i$ with $h\circ F_P^*$ constantly $i$.

For $s\in(P\times Q)^{\leq n+1}$, let $s_P,s_Q$ be its coordinate
projections and put
\[
 F(s)=(F_P(s_P),F_Q(s_Q)).
\]
Since projections preserve lengths and subtuple inclusions, $F$ is
$(n+1)$-cofinal. For
every full $(n+1)$-flag $(s_1,\ldots,s_{n+1})$,
\[
 c(F(s_1),\ldots,F(s_{n+1}))
   =h(F_P((s_1)_P),\ldots,F_P((s_{n+1})_P))=i.\qedhere
\]
\end{proof}

\begin{corollary}\label{cor:product-consistency}
Suppose $\nu<\kappa<\mu<\Lambda$, where $\nu,\mu$ are uncountable
regular cardinals, $\kappa,\Lambda$ are weakly compact, and
$2^\kappa<\mu$. Let $G\subseteq\Coll(\nu,{<}\kappa)$ be generic
over $V$, and let $H\subseteq\Coll(\mu,{<}\Lambda)$ be generic
over $V[G]$. Then $V[G][H]$ satisfies
\[
 P\leq_T\nu^+\times\mu^+
 \quad\Longrightarrow\quad \PH_1(P)
\]
for every nonempty directed quasi-order $P$.
In particular, for any integers $2\leq a$ and $b\geq a+3$, two
weakly compact cardinals suffice for the consistency of this
conclusion with $\nu^+\times\mu^+$ replaced by
$\omega_a\times\omega_b$.
\end{corollary}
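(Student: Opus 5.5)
The plan is to obtain $\PH_1(\nu^+,\omega)$ and $\PH_1(\mu^+,\tau)$ in $V[G][H]$ for $\tau=\omega^{|\nu^+|}=2^{\nu^+}$. \Cref{lem:ph-product} then gives $\PH_1(\nu^+\times\mu^+)$, and \Cref{thm:tukey} transfers it to every nonempty directed $P\leq_T\nu^+\times\mu^+$.

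\textbf{The second factor.} In $V[G]$, the cardinal $\Lambda$ is still weakly compact, because $\Coll(\nu,{<}\kappa)$ has size $\kappa<\Lambda$ and small forcing preserves weak compactness. Also $\mu$ stays regular and uncountable, since the first forcing is $\kappa$-c.c. and $\kappa<\mu$. So \Cref{thm:ph-weaklycompact}, applied in $V[G]$ with $\mu$ in place of $\nu$, gives $\PH_1(\mu^+,\lambda)$ in $V[G][H]$ for every infinite $\lambda<\mu$, where $\mu^+=\Lambda$ there. We take $\lambda=\tau$. In $V[G]$ we have $\nu^+=\kappa$ and $2^\kappa\leq(2^\kappa)^V<\mu$, since the first forcing has size $\kappa$ and so adds at most $(\kappa^\kappa)^V=(2^\kappa)^V$ subsets of $\kappa$. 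The forcing $\Coll(\mu,{<}\Lambda)$ is ${<}\mu$-closed, so it adds no subsets of $\kappa$. Hence $\tau=2^\kappa<\mu$ also holds in $V[G][H]$, as needed.

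\textbf{The first factor.} In $V[G]$, \Cref{thm:ph-weaklycompact} gives $\PH_1(\kappa,\omega)$ with $\kappa=\nu^+$. We must show this survives the ${<}\mu$-closed forcing $H$. A coloring $c:\kappa^2\to\omega$ in $V[G][H]$ is a sequence of length $\kappa<\mu$ of ground-model objects, so it already lies in $V[G]$. The same argument applies to every map $F:\kappa^{\leq2}\to\kappa$. The witnesses therefore transfer upward, homogeneity and cofinality being absolute. Cardinals up to $\mu$ are preserved, so $\nu^+$ is still $\kappa$. This gives $\PH_1(\nu^+)$ in $V[G][H]$.

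\textbf{Integer case.} For integers $2\leq a$ and $b\geq a+3$, set $\nu=\omega_{a-1}$ and $\mu=\omega_{b-1}$, computed in the final model. In $V$ we take $\nu=\omega_{a-1}$ and let $\kappa$ be a weakly compact cardinal above it. After $G$, $\kappa$ becomes $\omega_a$ and $2^\kappa$ becomes at most $\omega_{a+1}$, granting GCH at $\kappa$ in $V$ (we may first force GCH, or simply choose $\mu$ large enough). We then need a regular $\mu$ with $2^\kappa<\mu$ and $\mu^+=\omega_b$, which means $\mu=\omega_{b-1}$ in $V[G]$ with $b-1\geq a+2$. Cardinals between $\kappa$ and $\Lambda$ in $V$ are indexed as $\kappa^{+m}$, so we choose $\mu=\kappa^{+(b-1-a)}$ in $V$ and take $\Lambda$ weakly compact above $\mu$. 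If GCH fails in $V$, we first pass to $L$ or force GCH; both preserve weak compactness, which gives $2^\kappa=\kappa^+<\mu$ because $b-1-a\geq2$.

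\textbf{Main obstacle.} The step needing the most care is verifying that $\tau=2^{\nu^+}$ stays below $\mu$ in the final model, together with the cardinal bookkeeping in the integer case. The absoluteness checks themselves are routine.
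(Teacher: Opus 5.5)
Your proposal is correct and follows the paper's proof essentially step for step. $\PH_1(\kappa)$ from \Cref{thm:ph-weaklycompact} survives the ${<}\mu$-closed second collapse, the same theorem applied in $V[G]$ gives $\PH_1(\Lambda,2^\kappa)$ with $2^\kappa\le(2^\kappa)^V<\mu$, and \Cref{lem:ph-product} together with \Cref{thm:tukey} finishes; you cite small-forcing preservation of weak compactness, which the paper proves inline. In the integer case you get $2^\kappa=\kappa^+$ by passing to $L$ rather than by the paper's $\Coll(\kappa^+,(2^\kappa)^V)$, which also works since weak compactness is downward absolute to $L$. Drop the aside ``or simply choose $\mu$ large enough,'' though: $\mu$ must be exactly $\kappa^{+(b-1-a)}$ to land at $\omega_b$.
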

\begin{proof}
By \Cref{thm:ph-weaklycompact}, $V[G]$ satisfies $\kappa=\nu^+$
and $\PH_1(\kappa)$. Every subset of $\kappa$ has a name contained
in
\[
 \{(\check\xi,p):\xi<\kappa,\ p\in\Coll(\nu,{<}\kappa)\},
\]
by the construction in \Cref{lem:name-agreement}. There are at most
$(2^\kappa)^V$ such names. The first collapse is $\kappa$-c.c. and
preserves this cardinal, so old subsets give
$2^\kappa=(2^\kappa)^V<\mu$ in $V[G]$.

Forcing $R$ of hereditary size less than a weakly compact $\Lambda$
preserves its weak compactness. First, $\Lambda$ remains inaccessible:
for each $\alpha<\Lambda$, membership names for subsets of $\alpha$
number at most $2^{\max\{\omega,|\alpha|,|R|\}}<\Lambda$, and the
chain condition preserves regularity. Given a new $A\subseteq\Lambda$,
choose a ground-model $\Lambda$-model containing $V_\Lambda$, $R$,
and a name for $A$. By \cite[Theorem~1.1(7)]{Gitman}, it admits an
embedding with critical point $\Lambda$. The embedding fixes $R$
and every condition, so it lifts through the generic filter.
The lifted domain is a transitive weak $\Lambda$-model of size
$\Lambda$ containing $A$. The characterization in
\cite[Theorem~1.1(4)]{Gitman} therefore gives weak compactness in the
extension. Apply this to the first collapse.

The second collapse is ${<}\mu$-closed, so it adds no new
$\kappa$-sequences of ordinals. It therefore adds no colorings
$\kappa^2\to\omega$ and preserves both their old $2$-cofinal
witnesses and $2^\kappa$. At $\Lambda=\mu^+$,
\Cref{thm:ph-weaklycompact}, applied in $V[G]$, gives
$\PH_1(\Lambda,2^\kappa)$.
Since $\omega^{|\kappa^2|}=2^\kappa$, \Cref{lem:ph-product} gives
$\PH_1(\nu^+\times\mu^+)$. Apply \Cref{thm:tukey}.

For the last assertion, start with weakly compact $\kappa<\Lambda$.
If necessary, first force with $\Coll(\kappa^+,(2^\kappa)^V)$ to
arrange $2^\kappa=\kappa^+$. This ${<}\kappa^+$-closed forcing adds
no subsets of $\kappa$, hence no two-colorings of $[\kappa]^2$.
The old homogeneous sets remain witnesses, so $\kappa$ remains
weakly compact. Its hereditary size is below $\Lambda$, so the preceding
small-forcing argument preserves weak compactness of $\Lambda$.
In this intermediate model, use
\[
 \nu=\omega_{a-1},\qquad \mu=\kappa^{+(b-a-1)}.
\]
Since $b-a-1\geq2$, we have $2^\kappa=\kappa^+<\mu$.
The two collapses make $\kappa=\omega_a$ and $\Lambda=\omega_b$.
\end{proof}

For infinite regular $\theta<\eta$, the least size of an unbounded
subset of $\theta\times\eta$ is $\theta$, whereas its cofinality is
$\eta$. These two Tukey invariants agree for every ordinal with no
greatest element, and an ordinal with a greatest element has no
unbounded subset. Thus the products in \Cref{cor:product-consistency}
are not Tukey equivalent to ordinals.
The requirement $2^\kappa<\mu$ accounts for the restriction $b\geq a+3$.
The first instance is $\omega_2\times\omega_5$.

\Needspace{8\baselineskip}
\section{Open Problems}\label{sec:questions}

\begin{question}\label{ques:consistency-strength}
What is the exact consistency strength of $\SFCN_\omega(\omega_2)$?
Does a Ramsey cardinal suffice for the consistency of
$\omega_2\hcarr(\omega_2)^2_\omega$?
\end{question}

\begin{question}\label{ques:fcn-separation}
Is it consistent that a regular uncountable $\kappa$ satisfies
\[
 \kappa\hcarr(\kappa)^2_\omega
 \qquad\text{and}\qquad \neg\FCN_\omega(\kappa)?
\]
\end{question}

\begin{question}\label{ques:remaining-targets}
Is $\aleph_{\omega+1}\hcarr(\aleph_{\omega+1})^2_\omega$ consistent,
and can it be strengthened to $\SFCN_\omega(\aleph_{\omega+1})$?
Can $\kappa\hcarr(\kappa)^2_\omega$ hold when $\kappa$ is the least
weakly inaccessible cardinal, or the least strongly inaccessible
cardinal?
\end{question}

With $<4$ in place of $<3$, these target instances have positive
answers: apply \cite[Theorem~5.1]{HSZ} to the ideals used in the
proof of \cite[Theorem~6.4(2)]{HSZ}. For the least-inaccessible
instances, start with a strongly inaccessible cardinal above the
supercompact so that the target cardinals exist after the collapse.

\section*{Acknowledgments}
ChatGPT was used for language editing and assistance with proof checking.
It also simplified the author's original proof of \Cref{thm:extract}.
All other proofs are due to the author. The author takes full responsibility
for the contents of this paper.

\enlargethispage{3\baselineskip}

\end{document}